
%

\documentclass[a4paper,11pt]{article}
\usepackage{amsmath,amssymb,amsthm,amscd}
\usepackage{graphicx,mathrsfs}

\textwidth150mm \textheight230mm \oddsidemargin10.6mm
\evensidemargin12.6mm \frenchspacing \marginparwidth0pt
\marginparsep0pt \topmargin8.6mm \headsep16pt
\parindent0mm
\setlength{\parskip}{1.5ex plus0.5ex minus0.5ex} \tolerance=2000
\emergencystretch20pt

\newtheorem{Thm}{Theorem}[section]
\newtheorem{Lem}[Thm]{Lemma}
\newtheorem{Cor}[Thm]{Corollary}
\newtheorem{Prop}[Thm]{Proposition}

\newtheorem{Def}[Thm]{Definition}
\newtheorem{Rem}[Thm]{Remark}

\makeatletter \@addtoreset{equation}{section} \makeatother

\newcommand{\R}{\mathbb{R}}
\newcommand{\B}{\mathbb{B}}
\newcommand{\C}{\mathbb{C}}

\newcommand{\N}{\mathbb{N}}
\newcommand{\dbar}{d\hspace*{-0.08em}\bar{}\hspace*{0.1em}}

\begin{document}
\title{The Mellin-Edge Quantisation for Corner Operators}
\author{B.-W. Schulze\footnote{B.-Wolfgang Schulze, Institute of Mathematics, University of Potsdam, 14469, Potsdam, Germany; e-mail: schulze@math.uni-potsdam.de}, Y. Wei\footnote{Yawei Wei, School of Mathematical Sciences and LPMC, Nankai University, 300071 Tianjin, China; e-mail: weiyawei@nankai.edu.cn}}

\pagestyle{headings}
\maketitle
\date{}

\begin{abstract}
We establish a quantisation of corner-degenerate symbols, here called Mellin-edge quantisation, on a manifold $M$ with second order singularities. The typical ingredients come from the ``most singular" stratum of $M$ which is a second order edge where the infinite transversal cone has a base $B$ that is itself a manifold with smooth edge. The resulting operator-valued amplitude functions on the second order edge are formulated purely in terms of Mellin symbols taking values in the edge algebra over $B.$ In this respect our result is formally analogous to a quantisation rule of \cite{Gil2} for the simpler case of edge-degenerate symbols that corresponds to the singularity order 1. However, from the singularity order 2 on there appear new substantial difficulties for the first time, partly caused by the edge singularities of the cone over $B$ that tend to infinity.\footnote{Acknowledgements: This article has been partially supported by the NSFC (National Science Foundation of China) under the grant 11001135, by the TSTC under the grant 10JCYBJC25200, and by the DFG (Deutsche Forschungsgemeinschaft) within the project ``Partial Differential Equations in Geometry and Mathematical Physics", moreover, by the Chern Institute of Mathematics in Tianjin, China, and by a Research Grant at the Nankai University.}
\end{abstract}

\tableofcontents
\newpage
\section*{Introduction}
\addcontentsline{toc}{section}{Introduction}
      \markboth{INTRODUCTION}{INTRODUCTION}
In this paper we study the analysis of pseudo-differential operators $A$ on a corner manifold $M$ of singularity order $2,$ i.e. with a stratification $s(M):=(s_0(M),s_1(M),s_2(M))$ for smooth submanifolds $s_j(M),j=0,1,2,$ where $s_2(M)$ is the second order edge of $M,$ while $N:=M\setminus s_2(M)$ is a manifold with first order edge. The latter means that we have a stratification $s(N)=(s_0(N),s_1(N))$ where $s_1(N)$ is the edge of $N$, moreover, $s_0(N)=N\setminus s_1(N)$ is smooth, and $s_1(N)$ has a neighbourhood $V$ in $N$ that has the structure of an $X^\Delta $-bundle over $s_1(N)$ for a smooth compact manifold $X$ where  $X^\Delta :=(\overline{\R}_+\times X)/(\{0\}\times X)$ is the cone with base $X.$ Analogously the corner structure of $M$ near $s_2(M)$ means that $s_2(M)$ has a neighbourhood $W$ in $M$ with the structure of a $B^\Delta$-bundle over $s_2(M)$ for a compact manifold $B$ with edge. One of the major issues of such a program is to establish algebras of operators $A$ over $M$ with a symbolic structure $\sigma(A):= (\sigma_0(A), \sigma_1(A), \sigma_2(A)) $ referring to the stratification of $M$ and to understand, in particular, the natural operations such as compositions, similarly as is known in the pseudo-differential analysis on a manifold with smooth edge, cf. \cite{Schu32}. Recall that in the smooth edge case $N$ the interior symbol $\sigma _0(A),$ defined on $s_0(N),$ is edge-degenerate while the principal edge symbol $\sigma _1(A)$ is defined on
$s_1(N)$ and takes values in operators on the infinite stretched cone belonging to the above-mentioned cone bundle. The  edge calculus has much in common with boundary value problems on a smooth manifold with boundary, where the boundary is just the edge and $\sigma _1(A)$ is the well-known boundary symbol, see, in particular, Eskin's book \cite{Eski2} or Boutet de Monvel's algebra \cite{Bout1}. The point of view of boundary symbols turned out to be very fruitful in the subsequent development, see also \cite{Remp2}, or Grubb's monograph \cite{Grub1}. A calculus with edge-degenerate pseudo-differential operators and an extension of such a symbolic structure has been constructed in \cite{Schu32}, partly based of the investigation of boundary value problems without the transmission property \cite{Remp1}. In particular, the Mellin quantisations of \cite{Eski2} have influenced the way of building up the cone and edge theories that may be found now in various monographs, such as \cite{Schu2}, \cite{Egor1}. The paper \cite{Schu27} developed a corner calculus for the case $\textup{dim}\,s_2(M)=0$ which is to some extent a generalisation of the cone algebra to the case of a singular base of the cone. Also the pseudo-differential edge analysis has been further developed, especially through an alternative edge quatisation in \cite{Gil2}, compared with the one employed before, here referred to as a Mellin-edge quantisation. Note that, although this belongs to the symbolic structure of edge pseudo-differential operators, it concerns, in fact, the analysis of specific parameter dependent families of operators on an infinite stretched cone. Let us also point out that here we refer to algebras of singular operators with a control of asymptotics of solutions to elliptic equations close to the singularities. More material and applications may be found in \cite{Dine4}, \cite{Liu3}, \cite{Haru13}, \cite{Flad3}, \cite{Kapa15}, \cite{Schu56}, \cite{Schu41}, \cite{Hab1}, \cite{Schu74}.

The main intention of the present article is to pass to the next singularity order and to construct via a Mellin-edge quantisation the corresponding parameter dependent families of operators on an infinite stretched cone, now with a base $B$ that has edge singularities of first order. An inspection of \cite{Gil2} shows the usefulness of such quantisations for managing the edge calculus. For similar reasons it is desirable to do such things for the corner calculus. However, since the infinite stretched cone with base $B$  has an edge that is tending to infinity together with the corner axis variable, and because of the corner of $B^\Delta $ itself, some difficulties appear in the second order corner case for the first time. We show here that such a quantisation is possible, indeed. Formally some consequences are then similar to the edge case, and the applications to ellipticity and parametrix constructions of corner operators within the calculus will be given in a subsequent paper.

The exposition is organised as follows. In Section 1 we formulate the spaces of parameter-dependent operators on a manifold with smooth edge, cf. Definition \ref{1-95}. Those constitute the parameter-dependent edge algebra. For the case of a smooth edge we briefly recall what we understand by edge and Mellin-edge quantisation. The parameter-dependent edge calculus is basic for similar quatisations on the level of edges of second singularity order. Moreover, we give some new descriptions of weighted spaces of smoothness $s\in \N$ over the infinite cone, cf. Proposition \ref{1-79}, and over a wedge, cf. Proposition \ref{1-84}, in terms of (for the calculus) typical differential operators. Those play a role for analogous descriptions of the respective weighted corner spaces below. Section 2 is devoted to the iteratively defined corner Sobolev spaces with double weights. After the definition for arbitrary smoothness, cf. Definitions \ref{2-8}, \ref{1-sp}, we characterise the corresponding spaces for the case $s\in \N$ in terms of differentiations, cf. Propositions \ref{2-17}, \ref{2-18}. The equivalence of edge and Mellin-edge quantisation in the corner case is established in Section 3.  In Section 4 we see that there remain corner Green symbols with flatness of infinite order in the corner-axis variable $t$ for $t\rightarrow 0$ as well as in the inner cone-axis variable $r$ for $r\rightarrow 0.$ Here we also employ a norm growth result for parameter-dependent edge operators that has been prepared in Section 1, cf. Theorem \ref{1-8}. Finally in Section 5 we show a growth comparing Theorem \ref{1-18} which is crucial for the proof of Theorem \ref{mainth}.

\section{Parameter-dependent operators on a manifold with edge}
Parameter-dependent operators in a similar sense as in \cite{Agra1} or \cite{Kond1} play a role in the analysis of operators on a singular manifold as ``semi-quantised" objects or operator-valued amplitude functions in pseudo-differential operators. Such families of operators appear on different levels, parallel to the stratification of the corresponding underlying space. In this article we consider stratified spaces $M=\bigcup _{j=0}^k s_j(M)$ for $k=2,$ where the stata $s_j(M)$ are smooth manifolds of different dimensions, and $s_j(M)\cap s_l(M)=\emptyset$ for $j\neq l.$ Simplest examples for $k=1$ are manifolds with conical singularities or edges. In order to illustrate some elements of our approach in general we assume for the moment $k$ to be arbitrary, cf. also \cite{Schu57}. A guideline to build up an ``adequate" pseudo-differential algebra over $M$ may be to first single out an algebra of (for the singularity) typical differential operators $A,$ to recognise the principal symbolic structure $\sigma (A):=(\sigma _j(A))_{0\leq j\leq k},$ in fact, a principal symbolic hierarchy, and then to ask a pseudo-differential algebra containing the elliptic elements (with respect to the $\sigma (\cdot)$) together with the parametrices of elliptic elements. Besides $\sigma (A)$ there are also required local amplitude functions to determine the operators $A$ via some chosen integral transform, here the Fourier or the Mellin transform. In our approach the raw material are scalar symbols of rather standard classes, however, degenerate in local splittings of variables on so-called stretched manifolds. Those already determine the operators modulo smoothing ones over $ s_0(M)$ (the main stratum), when we simply apply the operator convention (``quantisation") based on the local Fourier transform. Depending on the expectation what the future operator calculus has to accomplish  continuity of operators in weighted Sobolev spaces, subspaces with asymptotics of different kind, or elliptic regularity to solutions in such spaces, the operator conventions have to be reorganised, according to the nature of the singularities, on the expense of certain smoothing operators over $ s_0(M).$ Since the resulting new amplitude functions a priori contain the expected information, their construction may be the main task to understand the above-mentioned regularity phenomena.

Let $L_\textup{cl}^\mu(X;\R^l)$ for a $C^\infty$ manifold $X$ (with a fixed
Riemannian metric) be the space of all parameter-dependent
 pseudo-differential operators on $X$ of order $\mu\in\R$, where
 $\lambda \in \R^l$, $l\in\N$, is the parameter (for $l=0$ we omit
 $\R^l$). Modulo $L^{-\infty}(X;\R^l)$ an operator $A\in L_{\textup{cl}}^\mu(X;\R^l)$
 is locally given in the form $\textup{Op}_x(a)(\lambda)$ for an
 amplitude function $a(x,\xi,\lambda)$ of order $\mu$ over
 $\R_x^n$ of H\"omander's class with $(\xi,\lambda)\in \R^n\times
 \R^l$ being the covariable (classical, when $a$ has an asymptotic
 expansion into homogenous components of order $\mu-j$, $j\in \N$). Parameter-dependent pseudo-differential operators are very common; our definition is slightly stonger that the one in Shubin's book \cite{Shub2}. By $\textup{Op}_x(\cdot)$ we
 understand the operator based on the Fourier transform in $\R^n$,
 i.e. $\mathcal{F}^{-1}a \mathcal{F}$; moreover
 $L^{-\infty}(X;\R^l):=\mathcal{S}(\R^l,L^{-\infty}(X))$,
 where $L^{-\infty}(X)$ is the space of smoothing operators on
 $X$. Let $H^s(X)$, $s\in\R$, denote the scale of standard Sobolev
 spaces on $X$ (when $X$ is compact, otherwise we have
 $H_{\textup{loc}}^s(X)$, the space of distributions $u$ on $X$
 such that for every $\varphi\in C_0^\infty(X)$ the push forward
 of $\varphi u$ under a chart to $\R^n$ belongs to $H^s(\R^n)$, and $H^s_{\textup{comp}}(X)$ the subspace of elements
 with compact support). We often employ the well-known fact that
 for a compact $C^\infty$ manifold $X$ the space
 $L^\mu_{\textup{cl}}(X;\R^l)$ for every $\mu\in \R$ contains an
 element $R^\mu(\lambda)$ that induces isomorphisms
 \begin{equation}\label{1-1}R^\mu(\lambda):H^s(X)\to
 H^{s-\mu}(X) \quad \textup{for all}\, s\in \R.\end{equation}
 For references below we need some notation
 from the calculus on a manifold $M$ with conical singularities.
 Assume for simplicity that there is only one singular point $c$.
 Locally close to $c$ we express spaces and operators in the variables $(r,x)\in
 X^\wedge.$ Weighted distribution spaces will be described in terms of the Mellin transform in $r$-direction, $Mu(z)=\int_0^\infty r^{z-1}u(r)dr.$ Generalities on Mellin transformation techniques may also be found in \cite{Dors1}. First, $\mathcal{H}^{s,\gamma}(X^\wedge)$ for $s,\gamma\in\R$ is the completion of $C_0^\infty(X^\wedge)$ with respect
 to the norm \begin{equation}\label{1-34}\big\{\int\parallel  \!R^s(\textup{Im}\,z)Mu(z)\!\parallel  ^2_{L^2(X)}\dbar z\big\}^\frac{1}{2},\end{equation}
 $\dbar z:=(2\pi i)^{-1}dz$, with integration
 over the line $\Gamma_{\frac{n+1}{2}-\gamma}$, $n:=\textup{dim}\,X$, where
 $\Gamma_\beta:=\{z\in\C:\textup{Re}\, z=\beta\}$. Here $R^s(\lambda)\in
 L_{\textup{cl}}^s(X;\R)$ is an order reducing family of the kind
 \eqref{1-1} for $\mu=s$, and $l=1$.
 \\
 \\
 Observe that
 \begin{equation}\label{0-6}
 \mathcal{H}^{0,0}(X^\wedge)=r^{-n/2}L^2(\R_+\times X)
 \end{equation} with $L^2(\R_+\times X)$ referring to the measure
 $drdx$. Spaces like $\mathcal{H}^{s,\gamma}(X^\wedge)$ already occur in \cite{Kond1}.
 It will also be useful to
 have the cylindrical Sobolev spaces $H^s(\R^q\times X)$ defined
 as the completion of $C_0^\infty(\R^q\times X)$ in the norm
$\big\{\int \parallel  \!R^s(\eta)\hat{u}(\eta)\!\parallel  ^2_{L^2(X)}d\eta\big\}^\frac{1}{2}$
 with $\hat{u}=\mathcal{F}_{y\to\eta} u$ being the Fourier transform
 and $R^s(\eta)$ an order reducing family in the above sense, here
 with the parameters $\eta\in \R^q$. Finally, we need spaces in
 the set-up of operators on the manifold $X^\asymp:=\R\times X$
 with conical exit to infinity. The space
 $H^s_{\textup{cone}}(X^\asymp)$ is defined to be the completion
 of $C_0^\infty(X^\asymp)$ with respect to the norm
\begin{equation}\label{1-35}\big\{\int_{-\infty}^\infty \parallel [r]^{-s+\frac{n}{2}}
 \mathcal{F}^{-1}_{\rho\to r}R^s([r]\rho,[r]\eta)(\mathcal{F}_{r\to \rho} u)(r)\parallel^2_{L^2(X)}dr\big\}^\frac{1}{2}\end{equation}
 where $R^s(\tilde{\rho},\tilde{\eta})$ is order reducing on $X$
 with the parameters $(\tilde{\rho},\tilde{\eta})\in \R^{1+q}$,
 $\mathcal{F}_{r\to\rho}$ the one-dimensional Fourier transform,
 and $r\to [r]$ is a function in $C^\infty(\R)$, $[r]>0$, with $[r]=|r|$
 for $|r|\geq c$ for some $c>0$. In the latter norm we choose $\eta\in
 \R^q$ fixed and $|\eta|$ sufficiently large. Moreover, we set
 $H^s_{\textup{cone}}(X^\wedge):=H^s_{\textup{cone}}(X^\asymp)|_{X^\wedge}$.
 In this paper a cut-off function on the half-axis will be any real valued $\omega\in C^\infty(\overline{\R}_+)$
 such that $\omega(r)=1$ in a neighbourhood of $r=0$.
 For purposes below we recall another equivalent definition of the space $H^s_{\textup{cone}}(X^\asymp)$.
 To this end we choose a diffeomorphism $\chi_1: U\to U_1$ from a coordinate neighbourhood $U$ on $X$ to an
 open set $U_1\subset S^n$ with $S^n$ being the unit sphere in $\R_{\tilde{x}}^{n+1}$. Moreover, we form
 the diffeomorphism $\chi:\R_+\times U\to \Gamma:=\{\tilde{x}\in \R^{n+1}\setminus \{0\}: \tilde{x}/|\tilde{x}|\in U_1\}$
 by setting $\chi(r,x):=r\chi_1(x)$. Then $H^s_{\textup{cone}}(X^\asymp)$ is the set of all $u\in H^s_{\textup{loc}}(\R\times X)$
 such that $(1-\omega(r))\varphi(x)(u(\pm r,x)|_{\R_+\times U})\in \chi^*H^s(\R^{n+1})|_{\Gamma}.$ for any such $\chi$ and for every $\varphi\in C_0^\infty(U).$
Instead of polar coordinates in $\Gamma\subset \R_{\tilde{x}}^{n+1}\setminus\{0\}$ it is sufficient
 to assume that $U$ is diffeomorphic to the open unit ball in $\R^n$ and to write the coordinates
 $\tilde{x}\in \Gamma $ in the form $\tilde{x}={\tilde{x}_0\tilde{x}^\prime}$ for $\tilde{x}_0\in \R$, $\tilde{x}^\prime\in \R^n$,
 and to take a specific position of $\Gamma$ in the half space $\tilde{x}_0>0$ where we identify $\tilde{x}_0$ with
 $r\in \R_+$. We write $\Gamma$ in the form
 \[\Gamma=\{(r,r\tilde{x}^\prime)\in \R^{n+1}: r\in \R_+, \tilde{x}^\prime\in \R^n, |\tilde{x}^\prime|<1\}.\]
 Then if we identify $U$ with $|x|<1$ with $x\in \R^n$ being local coordinates in $U$, the diffeomorphism $\chi_1$ may be taken
 as an identification of $x$ with $\tilde{x}^\prime$ for $|\tilde{x}^\prime|<1$ and the above $\chi$ takes the form
 $\chi(r,x)=(r,rx)$. Then the space $H^s_{\textup{cone}}(X^\asymp)$ can be characterised as the set of all $u\in H^s_{\textup{loc}}(\R\times X)$
such that for $u(\pm r,x)$ we have
 \[(1-\omega(r))\varphi(x)(u(\pm r,x)|_{\R_+\times U})=\tilde{u}(r,\frac{\tilde{x}^\prime}{r})\]
 for some $\tilde{u}(r,\tilde{x}^\prime)\in H^s(\R^{1+n})|_\Gamma$. In other words, defining
 \begin{equation}\label{1-85}
 \beta:\R_+\times U\to \Gamma
 \end{equation} by $\beta(r,x)=(r,rx)\subset \R^{n+1}$, the map $(\beta_{[r]}\tilde{u})(\tilde{x}):=\tilde{u}(r,[r]x)=u(r,x)$
 gives us a bijection
 \begin{equation}\label{1-86}
 \beta_{[r]}:(1-\omega(|\tilde{x}|))\tilde{\varphi}H^s(\R^{n+1})|_\Gamma\to (1-\omega(r))\varphi(x)H^s_{\textup{cone}}(U^\wedge)
 \end{equation}
 for any $\varphi (x)\in C_0^\infty (U), \tilde{\varphi }(\tilde{x})\in C^\infty (\Gamma )$, such that $\tilde{\varphi }(r,rx)=\tilde{\varphi }(1,x)=\varphi (x)$ for all $r>0$ and a cut-off function $\omega (r)$ that is equal to $\omega ([r])$ where 
 $H^s_{\textup{cone}}(U^\wedge)=H^s_{\textup{cone}}(X^\wedge)|_{U^\wedge}.$

 A crucial
 role for the edge calculus play the spaces
 $$\mathcal{K}^{s,\gamma}(X^\wedge):=\{\omega u_0+(1-\omega)u_\infty: u_0\in
 \mathcal{H}^{s,\gamma}(X^\wedge), u_\infty\in H^s_{\textup{cone}}(X^\wedge)\}$$
 for any cut-off function $\omega $.
 Similarly as \eqref{0-6} we have $\mathcal{K}^{0,0}(X^\wedge)=r^{-n/2}L^2(\R_+\times X)$.
 Moreover, for every $s,\gamma, e\in \R$ we form the spaces
 $$\mathcal{K}^{s,\gamma;e}(X^\wedge):=\langle r\rangle^{-e}\mathcal{K}^{s,\gamma}(X^\wedge).$$ For purposes below we introduce a
 family of isomorphisms $\kappa_\delta:\mathcal{K}^{s,\gamma}(X^\wedge)\to \mathcal{K}^{s,\gamma}(X^\wedge),\quad
 (\kappa_\delta u)(r,x):=\delta^{\frac{n+1}{2}}u(\delta r,x)$, where $n:=\textup{dim}\,X$. Below we will also employ
 subspaces $\mathcal{K}^{s,\gamma}_{\Theta}(X^\wedge)\subset \mathcal{K}^{s,\gamma}(X^\wedge)$ of flatness $-\vartheta$
 relative to $\gamma$ for some $-\infty\leq \vartheta<0$ and the half-open weight interval $\Theta=(\vartheta,0].$ More precisely,
  we set $\mathcal{K}^{s,\gamma}_{\Theta}(X^\wedge)=\lim_{_{\begin{subarray}{c}\longleftarrow\\ \varepsilon>0\end{subarray}}}
  \mathcal{K}^{s,\gamma-\vartheta-\varepsilon }(X^\wedge)$
  in the respective Fr\'echet topology. In the case $\Theta=(-\infty,0]$ the space
  $\mathcal{K}^{s,\gamma}_\Theta(X^\wedge)=\mathcal{K}^{s,\infty}(X^\wedge)$ is independent of $\gamma$.
  \\
  \\
  Below we will need more information about the spaces $\mathcal{H}^{s,\gamma}(X^\wedge)$, $H^s_{\textup{cone}}(X^\wedge)$, etc.
  Let us set $(S_{\gamma-n/2}u)(\boldsymbol{r},x)=e^{-((n+1)/2-\gamma)\boldsymbol{r}}u(e^{-\boldsymbol{r}},x),$ for
  $(\boldsymbol{r},x)\in \R\times X$. This transformation extends $ S_{\gamma-n/2}: C_0^\infty(X^\wedge)\to C^\infty_0(\R\times X)$
  to an isomorphism
  \begin{equation}\label{1-77}
  S_{\gamma-n/2}:\mathcal{H}^{s,\gamma}(X^\wedge)\to H^s(\R\times X)
  \end{equation} for every $s,\gamma\in \R$.
  Another useful property is the following characterisation. For future references by $\textup{Diff}^m(\cdot)$ we denote the space of differential operators of order $m$ on the smooth manifold in parentheses.
  \begin{Prop}\label{1-91}
  For $s\in \N$, $\gamma\in \R$, the condition $u(r,x)\in \mathcal{H}^{s,\gamma}(X^\wedge)$ is equivalent to
  $(r\partial_r)^jD_x^\alpha u(r,x)\in r^{-n/2+\gamma}L^2(\R_+\times X)$ for all $D_x^\alpha \in \textup{Diff}^\alpha(X)$
  and $j+|\alpha|\leq s$.
  \end{Prop}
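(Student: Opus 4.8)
The plan is to transport the statement to the infinite cylinder $\R\times X$ via the isomorphism $S_{\gamma-n/2}$ of \eqref{1-77}, and there to reduce to the standard description of the cylindrical Sobolev space $H^s(\R\times X)$ for integer $s$. Put $\beta:=\frac{n+1}{2}-\gamma$ and $v:=S_{\gamma-n/2}u$. Shifting the weight in \eqref{0-6} gives $\mathcal{H}^{0,\gamma}(X^\wedge)=r^{-n/2+\gamma}L^2(\R_+\times X)$, so the case $s=0$ of \eqref{1-77} is an isomorphism $S_{\gamma-n/2}:r^{-n/2+\gamma}L^2(\R_+\times X)\to L^2(\R\times X)$. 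Consequently, for any iterated derivative $w$ of $u$, the membership $w\in r^{-n/2+\gamma}L^2(\R_+\times X)$ is equivalent to $S_{\gamma-n/2}w\in L^2(\R\times X)$. (The case $j=|\alpha|=0$ of the right-hand condition already forces $u\in r^{-n/2+\gamma}L^2(\R_+\times X)$, so all the derivatives below are well-defined distributions.)

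Next I would record the elementary conjugation identities. Writing $r=e^{-\boldsymbol{r}}$, so that $u(r,x)=r^{-\beta}v(-\log r,x)$, a direct differentiation gives
\[S_{\gamma-n/2}\,(r\partial_r)\,S_{\gamma-n/2}^{-1}=-(\partial_{\boldsymbol{r}}+\beta),\qquad S_{\gamma-n/2}\,D_x^\alpha\,S_{\gamma-n/2}^{-1}=D_x^\alpha,\]
the second because $D_x^\alpha$ acts only in the $x$-variables and commutes both with the substitution in $r$ and with multiplication by the $x$-independent scalar $e^{-\beta\boldsymbol{r}}$. Hence $S_{\gamma-n/2}\big((r\partial_r)^jD_x^\alpha u\big)=\big(-(\partial_{\boldsymbol{r}}+\beta)\big)^jD_x^\alpha v$, and the right-hand condition of the Proposition turns into: $(\partial_{\boldsymbol{r}}+\beta)^jD_x^\alpha v\in L^2(\R\times X)$ for all $D_x^\alpha\in\textup{Diff}^\alpha(X)$ and $j+|\alpha|\le s$. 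Expanding $(\partial_{\boldsymbol{r}}+\beta)^j=\sum_{k\le j}\binom{j}{k}\beta^{j-k}\partial_{\boldsymbol{r}}^k$ and, conversely, $\partial_{\boldsymbol{r}}^j=\sum_{k\le j}\binom{j}{k}(-\beta)^{j-k}(\partial_{\boldsymbol{r}}+\beta)^k$, and noting that each summand keeps the total order $\le s$, one sees this is equivalent to the unshifted condition $\partial_{\boldsymbol{r}}^jD_x^\alpha v\in L^2(\R\times X)$ for all $D_x^\alpha\in\textup{Diff}^\alpha(X)$, $j+|\alpha|\le s$.

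Since $u\in\mathcal{H}^{s,\gamma}(X^\wedge)\Leftrightarrow v\in H^s(\R\times X)$ by \eqref{1-77}, it remains to invoke the classical fact that, for $s\in\N$ and compact $X$, the norm of $H^s(\R\times X)$ is equivalent to $\big(\sum_{j+|\alpha|\le s}\|\partial_{\boldsymbol{r}}^jD_x^\alpha v\|^2_{L^2(\R\times X)}\big)^{1/2}$ with $D_x^\alpha$ running over $\textup{Diff}^\alpha(X)$. On the Fourier side of the defining norm $\{\int\|R^s(\eta)\hat v(\eta)\|^2_{L^2(X)}d\eta\}^{1/2}$ this in turn rests on the equivalence, uniformly in $\eta\in\R$, of $\|R^s(\eta)f\|_{L^2(X)}$ with $\big(\sum_{j+|\alpha|\le s}\|\,|\eta|^jD_x^\alpha f\|^2_{L^2(X)}\big)^{1/2}$ — a standard property of parameter-dependent order reducing operators on the compact manifold $X$ — together with Parseval's identity in $\boldsymbol{r}$. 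Chaining the three equivalences finishes the proof.

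The only genuinely non-formal ingredient, and the step I expect to carry the real content, is precisely this last equivalence: that the full $H^s(\R\times X)$-regularity is detected by the \emph{split} iterated derivatives $\partial_{\boldsymbol{r}}^jD_x^\alpha$ with $j+|\alpha|\le s$, and not only by general order-$s$ differential operators on $\R\times X$; this is where the compactness of $X$ and the ellipticity of the family $R^s$ are used. Alternatively one may bypass $S_{\gamma-n/2}$ and argue directly on the Mellin side \eqref{1-34}, using $M\big((r\partial_r)u\big)(z)=-z\,Mu(z)$, the relation $|z|\sim\langle\textup{Im}\,z\rangle$ on the weight line $\Gamma_{(n+1)/2-\gamma}$, the Mellin--Plancherel identification $\mathcal{H}^{0,\gamma}(X^\wedge)=r^{-n/2+\gamma}L^2(\R_+\times X)$, and the analogous equivalence for $\|R^s(\textup{Im}\,z)\cdot\|_{L^2(X)}$.
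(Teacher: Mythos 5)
Your argument is correct and follows exactly the route the paper indicates: transporting the statement to the cylinder via the isomorphism $S_{\gamma-n/2}$ of \eqref{1-77} (with the conjugation $S_{\gamma-n/2}(r\partial_r)S_{\gamma-n/2}^{-1}=-(\partial_{\boldsymbol{r}}+\beta)$ and absorption of the constant shift) and then invoking the standard characterisation of $H^s(\R\times X)$, $s\in\N$, by the split derivatives $\partial_{\boldsymbol{r}}^jD_x^\alpha$, which is precisely the ``analogous characterisation of the cylindrical Sobolev spaces combined with \eqref{1-77}'' that the paper cites as its proof. Your write-up merely makes explicit the details the paper leaves to the reader, so there is nothing to add.
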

Proposition \ref{1-91} is well-known. Recall that he proof easily follows by an analogous characterisation of the cylindrical Sobolov spaces $H^s(\R\times X)$ in terms of
differential operators in $(\boldsymbol{r},x)$ combined with \eqref{1-77}.
\begin{Prop}\label{1-79}
For every $s\in \N$ there exists a finite system of operators $\{D_j^\alpha: D_j^\alpha\in
 \textup{Diff}^{|\alpha|}(X^\wedge), |\alpha|\leq s, j=1,...,N\}$, for a suitable $N$,
such that $u\in \mathcal{K}^{s,\gamma}(X^\wedge)$ is equivalent to
\begin{equation}\label{1-78}
u\in \mathcal{K}^{0,\gamma}(X^\wedge), D_j^\alpha u\in \mathcal{K}^{0,\gamma;-s}(X^\wedge), \quad \textup{for all}\quad
|\alpha|\leq s, j=1,...,N
\end{equation} for every $\gamma\in \R$, and we have
\begin{equation}\label{1-81}
\| u\|_{\mathcal{K}^{s,\gamma}(X^\wedge)}\sim\{\|u\|^2_{\mathcal{K}^{0,\gamma}(X^\wedge)}+\sum_{0<|\alpha|\leq s}
\sum_{j=1,...,N}\|D_j^\alpha u\|^2_{\mathcal{K}^{0,\gamma;-|\alpha|}(X^\wedge)}\}^{1/2}.
\end{equation} Moreover, the norm $\{\|u\|^2_{\mathcal{K}^{0,\gamma}(X^\wedge)}+\sum_{|\alpha|=s}
\sum_{j=1,...,N}\|D_j^\alpha u\|^2_{\mathcal{K}^{0,\gamma;-s}(X^\wedge)}\}^{1/2}$ is equivalent to \eqref{1-81}.
The operators $D_j^\alpha$ can be chosen in such a way that
\begin{equation}\label{1-82}
\delta_\lambda D_j^\alpha=D_j^\alpha\delta_\lambda, \quad \textup{for all}\,\, \lambda\in \R_+
 \end{equation} for every $j, \alpha$, where $(\delta_\lambda u)(r,x):=u(\lambda r,x)$, $\lambda\in \R_+$.
\end{Prop}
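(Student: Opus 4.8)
The plan is to analyse the two ends $r\to 0$ and $r\to\infty$ of $X^\wedge$ separately, using a cut-off function $\omega$, reading off the behaviour near $r=0$ from Proposition \ref{1-91} and the behaviour near $r=\infty$ from the Euclidean description \eqref{1-85}--\eqref{1-86}. First I would fix the operator system. Choose a finite atlas $\{U_\iota\}_{\iota=1}^{L}$ of coordinate neighbourhoods on $X$, each identified with the open unit ball in $\R^n$, a subordinate partition of unity $\{\varphi_\iota\}$, and finitely many differential operators $D_1,\dots,D_M$ on $X$ (monomials $\partial_x^\beta$ cut off inside the $U_\iota$) generating $\textup{Diff}^m(X)$ as a $C^\infty(X)$-module for every $m\le s$. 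Let $\{D_j^\alpha\}$ be the finite list of all operators $(r\partial_r)^{a}D_i$ with $a+\textup{ord}\,D_i=|\alpha|\le s$. Each lies in $\textup{Diff}^{|\alpha|}(X^\wedge)$, and since $r\partial_r$, the $\partial_{x_k}$ and multiplication by functions of $x$ all commute with $\delta_\lambda$, so does every $D_j^\alpha$; this already gives \eqref{1-82}. By construction this system dominates, and is dominated, up to $C^\infty(X)$-linear combinations of operators of the same order, by the family $(r\partial_r)^{j}D_x^\beta$ of Proposition \ref{1-91}.

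Near $r=0$: splitting $u=\omega u+(1-\omega)u$, by Proposition \ref{1-91} the summand $\omega u$ lies in $\mathcal{H}^{s,\gamma}(X^\wedge)$ iff $(r\partial_r)^{j}D_x^\beta(\omega u)\in r^{-n/2+\gamma}L^2(\R_+\times X)=\mathcal{H}^{0,\gamma}(X^\wedge)$ for $j+|\beta|\le s$, hence, by the choice of the $D_j^\alpha$, iff $D_j^\alpha(\omega u)\in\mathcal{H}^{0,\gamma}(X^\wedge)$ for all $j,\alpha$, with equivalence of the corresponding norms. The commutators $[\omega,D_j^\alpha]$ are differential operators of order $\le|\alpha|-1$ with coefficients supported in a compact subset of $(0,\infty)\times X$, where all the spaces involved restrict to ordinary Sobolev spaces, so they contribute only harmless lower-order terms; thus the condition is equivalent to $\omega D_j^\alpha u\in\mathcal{H}^{0,\gamma}(X^\wedge)$ for all $j,\alpha$. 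Since near $r=0$ the factors $\langle r\rangle^{\pm s}$ are bounded with bounded inverse, this is the same as $\omega u\in\mathcal{K}^{0,\gamma}(X^\wedge)$ together with $\omega D_j^\alpha u\in\mathcal{K}^{0,\gamma;-s}(X^\wedge)$ for all $j,\alpha$.

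Near $r=\infty$ — the heart of the argument — I would use \eqref{1-85}--\eqref{1-86}. On each $U_\iota$ the map $\beta(r,x)=(r,rx)$ straightens the cone, and $(1-\omega)\varphi_\iota u\in H^s_{\textup{cone}}(U_\iota^\wedge)$ iff $\widetilde u_\iota:=(\beta_{[r]})^{-1}\big((1-\omega)\varphi_\iota u\big)\in H^s(\R^{n+1})|_{\Gamma_\iota}$, equivalently iff all Euclidean derivatives $\partial_{\widetilde x}^\delta\widetilde u_\iota$, $|\delta|\le s$, lie in $L^2(\R^{n+1})|_{\Gamma_\iota}$. Transporting along $\beta$ (with $\widetilde x_0=r$, $\widetilde x'=rx$) the chain rule turns $\partial_{\widetilde x_0}$ and $\partial_{\widetilde x'_k}$ into
\[
A_0=\partial_r-\frac{1}{r}\sum_k x_k\partial_{x_k},\qquad A_k=\frac{1}{r}\partial_{x_k}\quad(k=1,\dots,n),
\]
so that, since $\beta$ has Jacobian $r^n$ and hence identifies $L^2(\R^{n+1})|_{\Gamma_\iota}$ with $\mathcal{K}^{0,0}(U_\iota^\wedge)$ on the region $r\ge c$, the above is equivalent to: every order $\le s$ composition of the $A_0,A_k$ maps $(1-\omega)\varphi_\iota u$ into $\mathcal{K}^{0,0}(U_\iota^\wedge)$. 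Now $rA_0=(r\partial_r)-\sum_k x_k\partial_{x_k}$ and $rA_k=\partial_{x_k}$, so (the $x_k$ being bounded for $|x|<1$) the operators $rA_0,rA_1,\dots,rA_n$ generate, with bounded $C^\infty$-coefficients, the same filtered algebra as $r\partial_r,\partial_{x_1},\dots,\partial_{x_n}$; tracking the resulting powers of $r$ (and, near infinity, $r\sim\langle r\rangle$) by an induction on the order, the $\mathcal{K}^{0,0}$-condition on order-$m$ compositions of the $A$'s becomes the $\mathcal{K}^{0,0;-m}$-condition on order-$m$ compositions of $r\partial_r,\partial_x$, i.e. on the operators $D_j^\alpha$ with $|\alpha|=m$. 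Summing over $\iota$ and commuting $\varphi_\iota$ and $1-\omega$ past the $D_j^\alpha$ at the cost of the same compactly supported lower-order commutators as before yields the characterisation of $(1-\omega)u\in H^s_{\textup{cone}}(X^\wedge)$.

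Combining the two ends through $\mathcal{K}^{s,\gamma}(X^\wedge)=\{\omega u_0+(1-\omega)u_\infty\}$ and collecting the norm estimates gives \eqref{1-78} and \eqref{1-81}. For the last assertion — that the top-order expression with only $|\alpha|=0$ and $|\alpha|=s$ terms is already equivalent to \eqref{1-81} — I would invoke at each end the elementary control of intermediate derivatives by the extreme ones: for $H^s(\R\times X)$ (after applying $S_{\gamma-n/2}$ of \eqref{1-77}, which turns $r\partial_r$ into $\partial_{\boldsymbol r}$ modulo order zero) and for $H^s(\R^{n+1})$ one has $\|v\|_{H^s}\sim\|v\|_{L^2}+\sum_{|\delta|=s}\|\partial^\delta v\|_{L^2}$, and transporting this back along $S_{\gamma-n/2}$, resp. $\beta$, gives the claim. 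The step I expect to be the real obstacle is the one at $r=\infty$: one has to match $H^s_{\textup{cone}}$, defined by pulling standard Sobolev spaces back along the chart-dependent straightening $\beta$, with the intrinsic conormal operators $(r\partial_r)^{a}D_i$ and, crucially, with the correct powers of $\langle r\rangle$ in the weights $\mathcal{K}^{0,\gamma;-|\alpha|}$; once the passage $\partial_{\widetilde x}\leftrightarrow rA$ and the Jacobian identification $L^2(\R^{n+1})|_\Gamma\cong\mathcal{K}^{0,0}$ are in place, the partition-of-unity patching and commutator bookkeeping are routine.
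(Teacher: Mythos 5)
Your construction of the $D_j^\alpha$ is essentially the paper's own: the paper takes a partition of unity $\{\phi_j\}$ subordinate to conical charts $\chi_j(r,x)=r\chi_{j,1}(x)$ and sets $D_j^\alpha u=r^{|\alpha|}D^\alpha_{\tilde x}((\phi_j u)\circ\chi_j^{-1})$, declaring them ``obviously as desired'' and noting afterwards that $r^{|\alpha|}D^\alpha_{\tilde x}$ rewrites as combinations of $(r\partial_r)^jD_x^\beta$ — exactly your passage $\partial_{\tilde x}\leftrightarrow rA$. Your argument correctly supplies the verification (Proposition \ref{1-91} at $r\to 0$, the Jacobian identification $L^2(\R^{n+1})|_\Gamma\cong\mathcal{K}^{0,0}$ and the weight bookkeeping at $r\to\infty$) that the paper omits, so this is the same approach carried out in detail.
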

\begin{proof}
Let $\{U_1,...,U_N\}$ be an open covering of $X$ by coordinate neighbourhoods,
$\{\phi_1,...,\phi_N\}$ a subordinate partition of unity, and fix diffeomorphisms
$\chi_{j,1}: U_j\to V_j$ for suitable open subsets $V_j\subset S^n$. Then we obtain
diffeomorphisms $\chi_j : \R_+\times U_j \to \Gamma_j$ for conical subsets $\Gamma_j\subset \R_{\tilde{x}}^{1+n}$,
$\Gamma_j:=\{\tilde{x}\in \R^{1+n}\setminus \{0\}: \tilde{x}/|\tilde{x}|\in V_j\}$, by
$\chi_j(r,x):=r\chi_{j,1}(x)$, $r>0$. Then the operators
$D^\alpha_j: u \mapsto r^{|\alpha|}D^\alpha_{\tilde{x}}((\Phi_j u)\circ \chi_j^{-1})$
are obviously as desired.
\end{proof} Observe that $r^{|\alpha|}D^\alpha_{\tilde{x}}$ can be expressed by operators of the form
$(r\partial_r)^jD_x^\beta$ for $|\alpha|=j+|\beta|$.
  Let $B$ be a compact
 manifold with edge, $Y:=s_1(B)$. By definition $B$ is locally near $Y$ modelled on a wedge $X^\Delta\times \Omega$, for a
 smooth compact manifold $X$ and an open set $\Omega\subseteq \R^q$, $q=\textup{dim}Y$, every $y\in Y$
 has a neighbourhood $V$ in $B$ such that there is a ``singular" chart
 \begin{equation}\label{1-26}
 \chi: V\to X^\Delta\times \Omega
 \end{equation} which restricts to a chart $\chi_{V\cap Y} : V\cap Y\to \Omega$ on $Y$ and to a diffeomorphism
 \begin{equation}\label{1-30}\chi_\textup{reg}: V\setminus Y \to X^\wedge\times \Omega\end{equation} to the corresponding open stretched wedge.
 For simplicity we assume the transition maps for the latter charts to be independent of $r$ for some $0<r<\varepsilon$, $\varepsilon >0$.
If $B$ is a manifold with edge $Y$ by $\B$ we denote its stretched manifold, a smooth manifold with boundary $\partial \B$, where
$\partial \B$ is an $X$ bundle over $Y.$ Then we have $B=\B/\sim$ under the equivalence relation that identifies points in $\partial\B$ with
the same projection $y\in Y$.

If $H$ is a Hilbert space and $\varkappa=\{\varkappa_\delta\}_{\delta\in \R_+}$
a group of isomorphisms $\varkappa_\delta: H\to H$ with $\varkappa_\delta \varkappa_{\delta^\prime}=\varkappa_{\delta\delta^\prime}$
which is strongly continuous in $\delta\in \R_+$ we call $\varkappa$ a group action on $H$. We then always have an estimate
\begin{equation}\label{normk}
\|\varkappa_\lambda\|_{\mathcal{L}(H)}\leq c\max\{\lambda, \lambda^{-1}\}^M
\end{equation}
for some constants $c,M>0.$
A similar notion makes sense if we replace
 $H$ by a Fr\'echet space $E$ written as a projective limit $E=\lim_{_{\begin{subarray}{c}\longleftarrow\\ j\in \N\end{subarray}}}E^j$
 of Hilbert spaces with continuous embeddings $E^j \hookrightarrow E^0$ for all $j$ where $\varkappa$ is a group action of $E^0$
  that restricts to a group action on $E^j$ for every $j$.
\begin{Def}\label{1-25}
\begin{itemize}\item[\textup{(i)}]
We define $\mathcal{W}^{s}(\R^q,H)$ as the completion of $\mathcal{S}(\R^q,H)$
with respect to the norm
$\Vert u\Vert_{\mathcal{W}^s(\R^q,H)}=\big\{\int\langle\eta\rangle^{2s}
\Vert \kappa^{-1}_{\langle\eta\rangle} \hat{u}(\eta)\Vert^2_{H}d\eta\big\}^{\frac{1}{2}}$
with $\hat{u}(\eta)=(\mathcal{F}_{y\to \eta}u)(\eta)$ being the Fourier transform in $\R^q$. In particular, we can set
$H:=\mathcal{K}^{s,\gamma}(X^\wedge)$ with the above mentioned group action. \item[\textup{(ii)}]The space $H^{s,\gamma}(B)$ is
defined to be the set of all $u\in H_\textup{loc}^s(B\setminus Y)$ such that for every singular chart $\chi: V\to X^\Delta\times \R^q$
on $B$, $\omega\psi u\circ \chi_{\textup{reg}}^{-1}\in \mathcal{W}^{s}(\R^q,\mathcal{K}^{s,\gamma}(X^\wedge))$ for any cut-off
function $\omega$ and $\psi\in C^\infty_0(\B)$ supported close to $\partial \B$ in the stretched version of $V$.\end{itemize}
\end{Def}
In the case of the trivial group action, i.e. $\varkappa_\lambda=\textup{id}$ for all
$\lambda\in \R_+$ we write
\begin{equation}\label{eins}
\mathcal{W}^s(\R^q,H)=\mathcal{W}_1^s(\R^q,H)
\end{equation}
which is the standard Sobolev space over $\R^q$ of $H$-valued distributions of
smoothness $s$. In particular, for $s=0$ it follows that $\mathcal{W}^0_1(\R^q,H)=L^2(\R^q,H).$
\begin{Lem}\label{1-92}
Let $\varkappa$ be the trivial group action in the space $H$. Then we have continuous embeddings
$\mathcal{W}_1^{s+M}(\R^q,H)\hookrightarrow \mathcal{W}^s(\R^q,H)\hookrightarrow\mathcal{W}_1^{s-M}(\R^q,H),$ and
$\mathcal{W}^{s+M}(\R^q,H)\hookrightarrow \mathcal{W}_1^s(\R^q,H)\hookrightarrow\mathcal{W}^{s-M}(\R^q,H) $
for all $s\in \R,$ with the constant $M$ in the estimate \eqref{normk}.
\end{Lem}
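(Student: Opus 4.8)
The whole lemma will be reduced to the quantitative estimate \eqref{normk} for $\varkappa$ together with the trivial observation that $\langle\eta\rangle\ge 1$ for every $\eta\in\R^q$. First I would record the two operator bounds that are used throughout: since $\varkappa$ is a group homomorphism one has $\varkappa_{\langle\eta\rangle}^{-1}=\varkappa_{\langle\eta\rangle^{-1}}$, and since $\langle\eta\rangle\ge 1$ one has $\max\{\langle\eta\rangle,\langle\eta\rangle^{-1}\}=\langle\eta\rangle$; hence \eqref{normk} gives, with the same constants $c,M$,
\[
\|\varkappa_{\langle\eta\rangle}\|_{\mathcal L(H)}\le c\langle\eta\rangle^{M},
\qquad
\|\varkappa_{\langle\eta\rangle}^{-1}\|_{\mathcal L(H)}\le c\langle\eta\rangle^{M}
\qquad \text{for all } \eta\in\R^q .
\]

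Next I would insert these bounds into the norm of Definition \ref{1-25}(i). For $u\in\mathcal S(\R^q,H)$ the pointwise inequality $\|\varkappa_{\langle\eta\rangle}^{-1}\hat u(\eta)\|_H\le c\langle\eta\rangle^{M}\|\hat u(\eta)\|_H$ integrates to $\|u\|_{\mathcal W^s(\R^q,H)}\le c\,\|u\|_{\mathcal W_1^{s+M}(\R^q,H)}$; and writing $\hat u(\eta)=\varkappa_{\langle\eta\rangle}\bigl(\varkappa_{\langle\eta\rangle}^{-1}\hat u(\eta)\bigr)$ and using the first bound gives $\|\hat u(\eta)\|_H\le c\langle\eta\rangle^{M}\|\varkappa_{\langle\eta\rangle}^{-1}\hat u(\eta)\|_H$, hence $\|u\|_{\mathcal W_1^{s-M}(\R^q,H)}\le c\,\|u\|_{\mathcal W^{s}(\R^q,H)}$. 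Since $\mathcal S(\R^q,H)$ is dense in each of the spaces involved (they are defined as such completions), these two inequalities yield the continuous embeddings $\mathcal W_1^{s+M}(\R^q,H)\hookrightarrow\mathcal W^s(\R^q,H)\hookrightarrow\mathcal W_1^{s-M}(\R^q,H)$.

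The remaining chain is then obtained for free: applying the two inequalities just proved with $s$ replaced by $s+M$, respectively by $s-M$, gives $\mathcal W^{s+M}(\R^q,H)\hookrightarrow\mathcal W_1^{s}(\R^q,H)$ and $\mathcal W_1^{s}(\R^q,H)\hookrightarrow\mathcal W^{s-M}(\R^q,H)$. For a Fr\'echet $H=\lim_{\longleftarrow}E^j$ on which $\varkappa$ restricts to a group action of every $E^j$ (with \eqref{normk} holding on $E^0$, hence on each $E^j$), one simply runs the argument on every $E^j$ and passes to the projective limit. There is essentially no obstacle here; the only two points deserving a word of care are the identity $\varkappa_{\langle\eta\rangle}^{-1}=\varkappa_{\langle\eta\rangle^{-1}}$ and the replacement of $\max\{\langle\eta\rangle,\langle\eta\rangle^{-1}\}$ by $\langle\eta\rangle$, and it is precisely the latter that makes the loss enter symmetrically as the single shift $M$ on both sides of the scale.
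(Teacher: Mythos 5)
Your argument is correct and is exactly the computation the paper has in mind: it states Lemma \ref{1-92} is an evident consequence of Definition \ref{1-25}~(i), and your proof simply makes that explicit by combining \eqref{normk} with $\langle\eta\rangle\geq 1$ to bound $\|\varkappa_{\langle\eta\rangle}^{\pm 1}\|_{\mathcal{L}(H)}\leq c\langle\eta\rangle^{M}$ and inserting this pointwise into the defining norm, then shifting $s$ to obtain the second chain. No further comment is needed.
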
 In particular, for $\mathcal{W}^{\infty}=\bigcap_{s\in \R}\mathcal{W}^s$, etc., it follows that
$\mathcal{W}^{\infty}(\R^q,H)=\mathcal{W}_1^\infty(\R^q, H)$. Lemma \ref{1-92} is an evident consequence of Definition \ref{1-25} (i).
\begin{Prop}
For every $s\in \N$ we have $\mathcal{W}^s(\R^q,H)=\{u\in \mathcal{W}^0(\R^q,H): D_y^\alpha u\in \mathcal{W}^0(\R^q,H), |\alpha|\leq s\}$
and an equivalence of norms $$\| u\|_{\mathcal{W}^s(\R^q,H)}\sim \{\| u\|^2_{\mathcal{W}^0(\R^q,H)}+\sum_{|\alpha|\leq s}
\|D_y^\alpha u\|^2_{\mathcal{W}^0(\R^q,H)}\}^{1/2}.$$ The latter in turn is equivalent to $\{\| u\|^2_{\mathcal{W}^0(\R^q,H)}+\sum_{|\alpha|= s}
\|D_y^\alpha u\|^2_{\mathcal{W}^0(\R^q,H)}\}^{1/2}$.
\end{Prop}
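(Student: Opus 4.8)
The plan is to transport everything to the Fourier side and to exploit that, for a multi-index $\alpha$, the symbol $\eta^\alpha$ is a \emph{scalar} and therefore commutes with the group action $\kappa$. Working first on the dense subspace $\mathcal{S}(\R^q,H)$, one has $(\mathcal{F}_{y\to\eta}D_y^\alpha u)(\eta)=\eta^\alpha\hat u(\eta)$, and since $\kappa_{\langle\eta\rangle}^{-1}$ is linear with $\eta^\alpha\in\C$,
\[
\|D_y^\alpha u\|_{\mathcal{W}^0(\R^q,H)}^2=\int\big\|\kappa_{\langle\eta\rangle}^{-1}\big(\eta^\alpha\hat u(\eta)\big)\big\|_H^2\,d\eta=\int|\eta^\alpha|^2\,\big\|\kappa_{\langle\eta\rangle}^{-1}\hat u(\eta)\big\|_H^2\,d\eta .
\]
Summing over $|\alpha|\le s$ and adding the $\alpha=0$ term gives
\[
\|u\|_{\mathcal{W}^0(\R^q,H)}^2+\sum_{|\alpha|\le s}\|D_y^\alpha u\|_{\mathcal{W}^0(\R^q,H)}^2=\int\Big(1+\sum_{|\alpha|\le s}|\eta^\alpha|^2\Big)\big\|\kappa_{\langle\eta\rangle}^{-1}\hat u(\eta)\big\|_H^2\,d\eta .
\]

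Next I would invoke the elementary two-sided estimate $c^{-1}\langle\eta\rangle^{2s}\le 1+\sum_{|\alpha|\le s}|\eta^\alpha|^2\le c\langle\eta\rangle^{2s}$ for a constant $c=c(s,q)>0$, together with the same estimate in which the sum is restricted to $|\alpha|=s$ (there $\sum_{|\alpha|=s}|\eta^\alpha|^2\sim|\eta|^{2s}$, and the constant $1$ on the left handles small $\eta$). Substituting this into the last display and comparing with the definition of $\|\cdot\|_{\mathcal{W}^s(\R^q,H)}$ yields
\[
\|u\|_{\mathcal{W}^s(\R^q,H)}\sim\Big\{\|u\|_{\mathcal{W}^0}^2+\sum_{|\alpha|\le s}\|D_y^\alpha u\|_{\mathcal{W}^0}^2\Big\}^{1/2}\sim\Big\{\|u\|_{\mathcal{W}^0}^2+\sum_{|\alpha|= s}\|D_y^\alpha u\|_{\mathcal{W}^0}^2\Big\}^{1/2}
\]
for all $u\in\mathcal{S}(\R^q,H)$. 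Since $\mathcal{S}(\R^q,H)$ is dense in each space involved (by definition of $\mathcal{W}^s$ as a completion), these equivalences extend by continuity, and the set-theoretic description follows: the inclusion $\subseteq$ is immediate from the norm bound, and for $\supseteq$ one uses that the completion $\mathcal{W}^s(\R^q,H)$ is concretely $\{u:\hat u\ \text{measurable},\ \int\langle\eta\rangle^{2s}\|\kappa_{\langle\eta\rangle}^{-1}\hat u(\eta)\|_H^2\,d\eta<\infty\}$, so finiteness of the right-hand side above forces $u\in\mathcal{W}^s(\R^q,H)$. For Fréchet $H$ written as a projective limit $\lim_{\longleftarrow}H^j$ one applies the Hilbert-space case to each $H^j$ and passes to the projective limit.

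The argument is essentially mechanical; the only points needing a little care are (a) the reduction to $\mathcal{S}(\R^q,H)$, so that the Fourier computation and the interchange of $\kappa_{\langle\eta\rangle}^{-1}$ with the scalar $\eta^\alpha$ are rigorously justified, and (b) the uniformity of the polynomial estimate $1+\sum_{|\alpha|\le s}|\eta^\alpha|^2\sim\langle\eta\rangle^{2s}$. In contrast to Lemma \ref{1-92}, neither the estimate \eqref{normk} nor the growth constant $M$ enters here: the statement is sharp precisely because the symbols $\eta^\alpha$ are scalar-valued and thus see the group action only through their absolute value.
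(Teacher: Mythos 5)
Your proposal is correct and follows essentially the same route as the paper's proof: pass to the Fourier side, use that the scalar $\eta^\alpha$ commutes with $\varkappa_{\langle\eta\rangle}^{-1}$, identify $\eta^\alpha\hat u$ with $\widehat{D_y^\alpha u}$, and invoke the elementary equivalence $\langle\eta\rangle^{2s}\sim 1+\sum_{|\alpha|=s}|\eta^\alpha|^2$ (resp. with $|\alpha|\leq s$). The extra remarks on density, the concrete description of the completion, and the Fr\'echet case are consistent refinements of the same argument.
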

\begin{proof}
 Let us consider the second norm expression. The first one may be discussed in a similar manner. The estimate
$c_1(1+\sum_{|\alpha|=s}|\eta^\alpha|^2)\!\!\!\leq \!\!\!\langle \eta\rangle^{2s}\!\!\!\leq \!\!\!c_2(1+\sum_{|\alpha|=s}|\eta^\alpha|^2),\,c_1, c_2\!\!>0,$ entails
$c_1\int(1+\sum_{|\alpha|=s}|\eta^\alpha|^2)\|\varkappa_{\langle\eta\rangle}^{-1}\hat{u}(\eta)\|^2_H d\eta\leq \| u\|^2_{\mathcal{W}^s(\R^q,H)}
\leq c_2\int(1+\sum_{|\alpha|=s}|\eta^\alpha|^2)\|\varkappa_{\langle\eta\rangle}^{-1}\hat{u}(\eta)\|^2_H d\eta
.$ We obtain, using $\eta^\alpha\varkappa_{\langle\eta\rangle}^{-1}\hat{u}(\eta)=\varkappa_{\langle\eta\rangle}^{-1}\eta^\alpha\hat{u}(\eta)$,
the relation
$\int(1+\sum_{|\alpha|=s}|\eta^\alpha|^2)\|\varkappa_{\langle\eta\rangle}^{-1}\hat{u}(\eta)\|^2_H d\eta=\| u\|^2_{\mathcal{W}^0(\R^q,H)}+\int
\sum_{|\alpha|=s}\|\varkappa_{\langle\eta\rangle}^{-1}\eta^\alpha\hat{u}(\eta)\|^2_H d\eta.$
Then $\eta^\alpha \mathcal{F}(u)(\eta)=\mathcal{F}(D_y^\alpha u)(\eta)$ gives us the assertion. 
\end{proof}
Note that \begin{equation}\label{0-7}
\mathcal{W}^0(\R^q,\mathcal{K}^{0,0}(X^\wedge))=r^{-n/2}L^2(\R_+\times X\times \R^q)\,\,\,\,\mbox{for}\,\,\,\,n=\textup{dim}\,X
\end{equation} where $L^2(\R_+\times X\times \R^q)$ refers to the measure $drdxdy$.
\begin{Prop}\label{1-84}
For every $s\in \N$ we have the following equivalence of norms
\begin{multline}\label{1-83}
\| u\|_{\mathcal{W}^s(\R^q,\mathcal{K}^{s,\gamma}(X^\wedge))}\sim
\{\| u\|^2_{\mathcal{W}^0(\R^q,\mathcal{K}^{0,\gamma}(X^\wedge))}
+\sum_{|\beta|=s}\|D_y^\beta u\|^2_{\mathcal{W}^0(\R^q,\mathcal{K}^{0,\gamma}(X^\wedge))}
\\+\sum_{|\alpha|=s,1\leq j\leq N}\| D_j^\alpha u\|^2_{\mathcal{W}^0(\R^q,\mathcal{K}^{0,\gamma;-s}(X^\wedge))}+
\sum_{|\alpha|=s,1\leq j\leq N,|\beta|=s}\| D_j^\alpha D_y^\beta u\|^2_{\mathcal{W}^0(\R^q,\mathcal{K}^{0,\gamma;-s}(X^\wedge))}\}^{1/2}.
\end{multline} In the sum over $\alpha$ we may equivalently take $0<|\alpha|\leq s$ and $\|\cdot\|_{\mathcal{W}^0(\R^q,\mathcal{K}^{0,\gamma;-|\alpha|}(X^\wedge))}$,
and $0<\beta\leq s$ in the sum over $\beta$.
\end{Prop}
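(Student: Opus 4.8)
The plan is to assemble the statement from the two characterisations already at hand: the description of $\mathcal{W}^s(\R^q,H)$ for a Hilbert space $H$ with group action given in the Proposition immediately preceding this one, applied with $H=\mathcal{K}^{s,\gamma}(X^\wedge)$ and its group action $\{\kappa_\delta\}$, together with the description of $\mathcal{K}^{s,\gamma}(X^\wedge)$ from Proposition \ref{1-79}. The whole proof is then a tensor-product type bookkeeping argument; no genuinely new idea is needed.

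First I would apply the preceding Proposition with $H=\mathcal{K}^{s,\gamma}(X^\wedge)$, which replaces $\| u\|_{\mathcal{W}^s(\R^q,\mathcal{K}^{s,\gamma}(X^\wedge))}$ by the equivalent expression $\{\| u\|^2_{\mathcal{W}^0(\R^q,\mathcal{K}^{s,\gamma}(X^\wedge))}+\sum_{|\beta|=s}\| D_y^\beta u\|^2_{\mathcal{W}^0(\R^q,\mathcal{K}^{s,\gamma}(X^\wedge))}\}^{1/2}$. It then remains to rewrite a single generic term $\| v\|^2_{\mathcal{W}^0(\R^q,\mathcal{K}^{s,\gamma}(X^\wedge))}$, to be used once for $v=u$ and once for $v=D_y^\beta u$, in terms of the spaces $\mathcal{K}^{0,\gamma}$ and $\mathcal{K}^{0,\gamma;-s}$.

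For this second step I would, for each fixed $\eta\in\R^q$, apply the top-order reformulation of the equivalence \eqref{1-81} (the last sentence of Proposition \ref{1-79}) to the vector $\kappa_{\langle\eta\rangle}^{-1}\hat v(\eta)\in\mathcal{K}^{s,\gamma}(X^\wedge)$; the constants there depend only on $s$, $\gamma$, $X$ and the fixed choice of the operators $D_j^\alpha$, hence not on $\eta$, so the equivalence may afterwards be integrated in $\eta$. Next I would commute $D_j^\alpha$ past $\kappa_{\langle\eta\rangle}^{-1}$: since $\kappa_\delta=\delta^{(n+1)/2}\delta_\delta$ with $\delta_\delta$ the dilation of \eqref{1-82}, that relation gives $D_j^\alpha\kappa_{\langle\eta\rangle}^{-1}=\kappa_{\langle\eta\rangle}^{-1}D_j^\alpha$, and since each $D_j^\alpha$ acts only in the variables $(r,x)$ it commutes with $\mathcal{F}_{y\to\eta}$, so that $D_j^\alpha\kappa_{\langle\eta\rangle}^{-1}\hat v(\eta)=\kappa_{\langle\eta\rangle}^{-1}\widehat{D_j^\alpha v}(\eta)$. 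Squaring, integrating over $\eta$, and unwinding the definitions of the $\mathcal{W}^0$-norms (using also that $\kappa_\delta$ restricts to a group action on every $\mathcal{K}^{0,\gamma;e}(X^\wedge)$) then yields $\| v\|^2_{\mathcal{W}^0(\R^q,\mathcal{K}^{s,\gamma}(X^\wedge))}\sim\| v\|^2_{\mathcal{W}^0(\R^q,\mathcal{K}^{0,\gamma}(X^\wedge))}+\sum_{|\alpha|=s,\,1\le j\le N}\| D_j^\alpha v\|^2_{\mathcal{W}^0(\R^q,\mathcal{K}^{0,\gamma;-s}(X^\wedge))}$.

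Finally I would substitute $v=u$ and $v=D_y^\beta u$ into this; since $D_j^\alpha$ and $D_y^\beta$ act in disjoint groups of variables they commute, so $D_j^\alpha(D_y^\beta u)=D_j^\alpha D_y^\beta u$, and combining with the first step recovers precisely the four families of terms in \eqref{1-83}. The last sentence of the statement is obtained along the same lines, using instead the full form of \eqref{1-81} with $0<|\alpha|\le s$ and the norms $\|\cdot\|_{\mathcal{K}^{0,\gamma;-|\alpha|}}$, together with the analogous remark (with $0<|\beta|\le s$) in the preceding Proposition. I do not expect a serious obstacle here, since the proposition is essentially an assembly of Proposition \ref{1-79} and its predecessor; the one step needing genuine care is the commutation $D_j^\alpha\kappa_{\langle\eta\rangle}^{-1}=\kappa_{\langle\eta\rangle}^{-1}D_j^\alpha$, which is exactly what the $\delta_\lambda$-homogeneity \eqref{1-82} was built for, and without it the mixed term $\| D_j^\alpha D_y^\beta u\|_{\mathcal{W}^0}$ would not emerge cleanly after the $\eta$-integration.
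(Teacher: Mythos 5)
Your proposal is correct and follows essentially the same route as the paper: apply the preceding characterisation of $\mathcal{W}^s(\R^q,H)$ with $H=\mathcal{K}^{s,\gamma}(X^\wedge)$, then rewrite the resulting $\mathcal{W}^0(\R^q,\mathcal{K}^{s,\gamma}(X^\wedge))$-norms pointwise in $\eta$ via the norm equivalence of Proposition \ref{1-79}, using the commutation relation \eqref{1-82} to pass $D_j^\alpha$ through $\delta_{\langle\eta\rangle}^{-1}$ (and through $\mathcal{F}_{y\to\eta}$) before integrating in $\eta$. The paper's proof is just a compressed version of this same computation, so no discrepancy remains to be addressed.
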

\begin{proof}
According to Proposition \ref{1-79} we have
\begin{align*}
&\|u\|^2_{\mathcal{W}^s(\R^q,\mathcal{K}^{s,\gamma}(X^\wedge))}\sim \|u\|^2_{\mathcal{W}^0(\R^q,\mathcal{K}^{s,\gamma}(X^\wedge))}
+\sum_{|\beta|=s}\|D_y^\beta u\|^2_{\mathcal{W}^0(\R^q,\mathcal{K}^{s,\gamma}(X^\wedge))}
\\&=\int\|\langle\eta\rangle^{-(n+1)/2}\delta_{\langle\eta\rangle}^{-1}\hat{u}(r,x,\eta)\|^2_{\mathcal{K}^{s,\gamma}(X^\wedge)}d\eta\qquad\qquad\qquad\qquad\qquad\qquad
\\&\qquad +\sum_{|\beta|=s}\int\|\langle\eta\rangle^{-(n+1)/2}\delta_{\langle\eta\rangle}^{-1}
\widehat{(D_y^\beta u)}(r,x,\eta)\|^2_{\mathcal{K}^{s,\gamma}(X^\wedge)}d\eta
\\&=\int\|\langle\eta\rangle^{-(n+1)/2}\delta_{\langle\eta\rangle}^{-1}\hat{u}(r,x,\eta)\|^2_{\mathcal{K}^{0,\gamma}(X^\wedge)}
\qquad\qquad\qquad\qquad\qquad\qquad\qquad\\&\qquad +\sum_{|\alpha|=s,j=1,...,N}
\|\langle\eta\rangle^{-(n+1)/2}D_j^\alpha\delta_{\langle\eta\rangle}^{-1}\hat{u}(r,x,\eta)\|^2_{\mathcal{K}^{0,\gamma;-s}(X^\wedge)}d\eta\\&\qquad +\sum_{|\beta|=s}\int
\|\langle\eta\rangle^{-(n+1)/2}\delta_{\langle\eta\rangle}^{-1}\widehat{(D_y^\beta u)}(r,x,\eta)\|^2_{\mathcal{K}^{0,\gamma}(X^\wedge)}d\eta\\&\qquad +\sum_{|\alpha|=s,j=1,...,N,|\beta|=s}\int
\|\langle\eta\rangle^{-(n+1)/2}\delta_{\langle\eta\rangle}^{-1}D_j^\alpha\widehat{(D_y^\beta u)}(r,x,\eta)\|^2_{\mathcal{K}^{0,\gamma;-s}(X^\wedge)}d\eta;
\end{align*} here we employed the relation \eqref{1-82}.
\end{proof}
In order to prepare the operators of the edge calculus we first consider edge-degenerate pseudo-differential operators.
Let $L^\mu_\textup{deg}(X^\Delta\times \Omega;\R^l)$ denote the set of all $A(\lambda)\in L^\mu_\textup{cl}(X^\wedge\times \Omega;\R^l)$ such that
there is a $\tilde{p}(r,y,\tilde{\rho},\tilde{\eta},\tilde{\lambda})\in C^\infty(\overline{\R}_+\times \Omega,L^\mu_{\textup{cl}}(X;\R^{1+q+l}))$
with $A(\lambda)=r^{-\mu}\textup{Op}_y(\textup{Op}_r(p))(\lambda)$ \textup{mod} $L^{-\infty}(X^\wedge\times\Omega;\R^l)$ where $p(r,y,\rho,\eta,\lambda):=\tilde{p}(r,y,r\rho,r\eta,r\lambda)$. The space $L^\mu_\textup{deg}(B;\R^l)$ of parameter-dependent edge-degenerate pseudo-differential operators of order $\mu$
on a manifold $B$ with edge $Y$ is defined to be the subspace of all $A(\lambda)\in L^\mu_{\textup{cl}}(B\setminus Y;\R^l)$
such that for every singular chart $\chi$ as in \eqref{1-26} we have $A(\lambda)|_V\in (\chi^{-1})_*L^\mu_\textup{deg}(X^\Delta\times \Omega;\R^l)$.
The operators in $L^\mu_{\textup{deg}}(B;\R^l)$ will not be continuous between weighted
 spaces as in Definition \ref{1-25} (ii). Therefore, we modify the operators by smoothing ones via some quantisations
 close to the edge. After that, in order to have the expected natural properties under algebraic operations, we add specific so-called smoothing Mellin plus Green terms. The quantisation refers to the local
 splitting of variables $(r,x,y)$ coming from \eqref{1-30}. If $E$ is a Fr\'echet space by $\mathcal{A}(\C,E)$ we denote the space of entire functions with values in $E.$ Now let $M_{\mathcal{O}}^\mu(X;\R^l)$, $l\in \N$,
 denote the space of all $h(z,\lambda)\in \mathcal{A}(\C,L^\mu_{\textup{cl}}(X;\R^l))$ such that $h|_{\Gamma_\beta}\in L^\mu_{\textup{cl}}
 (X;\Gamma_\beta\times\R^l)$ for every $\beta\in\R$, uniformly in compact $\beta$-intervals. In the following we often employ the weighted Mellin transform in $\R_+$-direction $M_\gamma  u(z)=\int_0^\infty r^{z-1}u(r)dr|_{\Gamma_{1/2-\gamma }}$ with the inverse $M_\gamma  ^{-1}g(r)=\int_{\Gamma_{1/2-\gamma }}r^{-z}g(z)\dbar z,\,\dbar z=(2\pi i)^{-1}dz.$ The associated weighted Mellin pseudo-differential operators with Mellin amplitude function $f(r,r',z)$ are denoted by $\textup{op}^\gamma _M(f).$ We employ the well-known fact, cf., for instance, \cite{Schu2}, that for every
 $p(r,y,\rho,\eta,\lambda)=\tilde{p}(r,y,\tilde{\rho},\tilde{\eta},\tilde{\lambda})$ such that $\tilde{p}(r,y,\tilde{\rho},\tilde{\eta},\tilde{\lambda})
 \in C^\infty(\overline{\R}_+\times \Omega, L^\mu_{\textup{cl}}(X;\R^{1+q+l}))$ there exists an $\tilde{h}(r,y,z,\eta,\lambda)=
 \tilde{h}(r,y,z,r{\eta},r{\lambda})$ for $\tilde{h}(r,y,z,\tilde{\eta},\tilde{\lambda})\in C^\infty(\overline{\R}_+\times \Omega, M^\mu_{\mathcal{O}}(X;\R^{q+l}_{\tilde{\eta},\tilde{\lambda}}))$ such that
 \begin{equation}\label{1-39}\textup{Op}_r(p)(r,y,\lambda)=\textup{op}_M^\gamma (h)(y,\eta,\lambda)\end{equation}
 mod $C^\infty(\Omega,L^{-\infty}(X^\wedge;\R^{q+l}))$ for every $\gamma \in \R$. This result is also referred to as the Mellin quantisation of $p$ (or $\tilde{p}$).

In the following, for functions
 $\varphi$, $\varphi^\prime$ we write $\varphi\prec \varphi^\prime$ if $\varphi^\prime \equiv 1$ on $\textup{supp}\,\varphi$. Fix cut-off functions
 $\omega(r)$, $\omega^\prime(r)$ on the half-axis such that $\omega\prec\omega^\prime$ and excision function $\chi\prec \chi^\prime$ where $\omega
 +\chi=1$. Moreover, we set $\varphi_\eta(r):=\varphi(r[\eta])$ when $\varphi$ is a
 function on the half-axis, where $\eta\to [\eta]$ is any smooth strictly positive function in $\R^q$ such that $[\eta]=|\eta|$ for large $|\eta|$.
 Then, by virtue of pseudo-locality we have
 \begin{equation}\label{1-31}
 r^{-\mu}\textup{Op}_r(p)(y,\eta,\lambda)=r^{-\mu}\{\omega_{\eta,\lambda}\textup{op}_M^{\gamma-\frac{n}{2}}(h)(y,\eta,\lambda)\omega^\prime_{\eta,\lambda}
 +\chi_{\eta,\lambda}\textup{Op}_r(p)(y,\eta,\lambda)\chi^\prime_{\eta,\lambda}\}
 \end{equation}
mod $C^\infty(\Omega,L^{-\infty}(X^\wedge;\R^{q+l}))$. For the structure of the edge calculus it is essential that the
right hand side of \eqref{1-31} gives rise to an operator-valued symbol
\begin{equation}\label{1-32}
a(y,\eta,\lambda):=r^{-\mu}\epsilon\{\omega_{\eta,\lambda}\textup{op}_M^{\gamma-\frac{n}{2}}(h)(y,\eta,\lambda)\omega^\prime_{\eta,\lambda}
+\chi_{\eta,\lambda}\textup{Op}_r(p)(y,\eta,\lambda)\chi^\prime_{\eta,\lambda}\}\epsilon^\prime
\end{equation} for any cut-off functions $\epsilon$, $\epsilon^\prime$ on the $r$-half-axis. The correspondence $p\rightarrow a$ for an edge-degenerate symbol $p$ and $a$ of the form \eqref{1-32} will be called an edge quantisation of $p.$\\
\\
 Let $U\subseteq \R^p$ be an open set and $H$ and $\tilde{H}$ Hilbert spaces
  with group action $\varkappa$ and $\tilde{\varkappa}$, respectively. Then $S^\mu(U\times \R^q; H,\tilde{H})$ is defined to be the
  set of all $a(y,\eta)\in C^\infty(U\times \R^q,\mathcal{L}(H,\tilde{H}))$ satisfying the following (twisted) symbolic estimates
  $$\|\tilde{\varkappa}_{\langle\eta\rangle}^{-1}\{D^\alpha_yD^\beta_\eta a(y,\eta)\}\varkappa_{\langle\eta\rangle}\|_{\mathcal{L}(H,\tilde{H})}
  \leq c\langle\eta\rangle^{\mu-|\beta|}$$ for all $(y,\eta)\in K\times \R^q$, $K\Subset U$, and all multi-indices $\alpha\in \N^p$,
  $\beta\in \N^q$, for constants $c=c(\alpha,\beta,K)>0$. If $a_{(\mu)}(y,\eta)\in C^\infty(U\times (\R^q\setminus\{0\}),\mathcal{L}(H,\tilde{H}))$
  is a function satisfying the (twisted) homogeneity relation
  \begin{equation}\label{1-33}
  a_{(\mu)}(y,\delta\eta)=\delta^\mu\tilde{\varkappa}_\delta a_{(\mu)}(y,\eta)\varkappa_\delta^{-1}
  \end{equation} for all $\delta\in \R_+$, then we have $\chi(\eta)a_{(\mu)}(y,\eta)\in S^\mu(U\times \R^q; H,\tilde{H})$ for any
  excision function $\chi$ in $\R^q$ (i.e. $\chi\in C^\infty(\R^q)$, $\chi(\eta)\equiv 0$ for $|\eta|\leq c_0$,
  $\chi(\eta)\equiv 1$ for $|\eta|\geq c_1$, for some $0<c_0<c_1$). An element $a\in S^\mu(U\times \R^q; H,\tilde{H})$ is
  called classical if there is a sequence of homogeneous components $a_{(\mu-j)}(y,\eta)$, $j\in \N$, satisfying analogous
  relations to \eqref{1-33} for $\mu-j$ rather than $\mu$, for all $j$, such that
  $a(y,\eta)-\chi(\eta)\sum^N_{j=0}a_{(\mu-j)}(y,\eta)\in S^{\mu-(N+1)}(U\times \R^q; H,\tilde{H})$ for
  every $N\in \N$ and any excision function $\chi$. Let $S^\mu_{\textup{cl}}(U\times \R^q; H,\tilde{H})$ denote the space of classical
  symbols. A similar notion makes sense when $H$ or $\tilde{H}$ are Fr\'echet spaces with group action. The following observation is well-known. 
\begin{Lem}\label{1-80}
Let $a(y,\eta)\in C^\infty(U\times \R^q,\mathcal{L}(H,\tilde{H}))$ be a function such that
$a(y,\lambda\eta)=\lambda^\mu\tilde{\varkappa}_\lambda a(y,\eta)\varkappa_\lambda ^{-1}$
for $\lambda\geq 1$, $|\eta|\geq c$, for a constant $c>0$. Then we have
$a(y,\eta)\in S^\mu_{\textup{cl}}(U\times\R^q;H,\tilde{H}).$
\end{Lem}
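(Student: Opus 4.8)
The plan is to write $a$ as the sum of an exactly twisted-homogeneous classical symbol and a smoothing remainder. First I would construct the homogeneous extension $a_{(\mu)}(y,\eta)$ of $a$: for $|\eta|\ge c$ put $a_{(\mu)}(y,\eta):=a(y,\eta)$, and for $0<|\eta|<c$ choose any $\lambda\ge c/|\eta|$ and put $a_{(\mu)}(y,\eta):=\lambda^{-\mu}\tilde{\varkappa}_\lambda^{-1}a(y,\lambda\eta)\varkappa_\lambda$. Using the hypothesis $a(y,\lambda\eta)=\lambda^\mu\tilde{\varkappa}_\lambda a(y,\eta)\varkappa_\lambda^{-1}$ (for $\lambda\ge 1$, $|\eta|\ge c$) one checks that the definition is independent of the admissible $\lambda$, agrees with $a$ on $\{|\eta|\ge c\}$, and satisfies the twisted homogeneity relation \eqref{1-33} for all $\delta\in\R_+$. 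The essential point is that $a_{(\mu)}\in C^\infty(U\times(\R^q\setminus\{0\}),\mathcal{L}(H,\tilde{H}))$: near a point $(y_0,\eta_0)$ with $\eta_0\ne 0$ one may use one \emph{fixed} large $\lambda_0$, so that locally $a_{(\mu)}$ is the composition of the smooth map $(y,\eta)\mapsto a(y,\lambda_0\eta)$ with the fixed bounded operators $\tilde{\varkappa}_{\lambda_0}^{-1}$ on the left and $\varkappa_{\lambda_0}$ on the right; in particular the group action is never differentiated, so the mere strong continuity of $\varkappa,\tilde{\varkappa}$ suffices.

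Next I would fix an excision function $\chi$ with $\chi\equiv 0$ for $|\eta|\le c/2$ and $\chi\equiv 1$ for $|\eta|\ge c$. By the observation recalled immediately before the lemma, $\chi a_{(\mu)}\in S^\mu(U\times\R^q;H,\tilde{H})$, and since $\chi a_{(\mu)}-\chi\cdot a_{(\mu)}=0$ it is classical with leading homogeneous component $a_{(\mu)}$ and all lower ones zero. It remains to treat the remainder $b:=a-\chi a_{(\mu)}$. On $\{|\eta|\ge c\}$ we have $\chi\equiv 1$ and $a_{(\mu)}=a$, hence $b\equiv 0$ there; moreover $\chi a_{(\mu)}$ extends smoothly by $0$ across $\eta=0$ because $\chi$ vanishes near the origin, so $b\in C^\infty(U\times\R^q,\mathcal{L}(H,\tilde{H}))$ with $\eta$-support contained in the compact set $\{|\eta|\le c\}$.

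I would then show that such a $b$ lies in $S^{-\infty}(U\times\R^q;H,\tilde{H}):=\bigcap_\nu S^\nu(U\times\R^q;H,\tilde{H})$. For $K\Subset U$ and multi-indices $\alpha,\beta$ the derivative $D_y^\alpha D_\eta^\beta b$ is continuous, hence bounded on $K\times\{|\eta|\le c\}$ and zero for $|\eta|\ge c$. Since $\langle\eta\rangle$ stays in the compact set $[1,\langle c\rangle]\subset\R_+$ on $\textup{supp}\,b$, the estimate \eqref{normk} bounds $\|\tilde{\varkappa}_{\langle\eta\rangle}^{-1}\|$ and $\|\varkappa_{\langle\eta\rangle}\|$ there, while $\langle\eta\rangle^{\nu-|\beta|}$ is bounded below by a positive constant on $[1,\langle c\rangle]$ for every $\nu$; hence $\|\tilde{\varkappa}_{\langle\eta\rangle}^{-1}\{D_y^\alpha D_\eta^\beta b\}\varkappa_{\langle\eta\rangle}\|_{\mathcal{L}(H,\tilde{H})}\le c_{\alpha,\beta,K}\langle\eta\rangle^{\nu-|\beta|}$ for arbitrary $\nu$, i.e.\ $b\in S^{-\infty}$. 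Setting $a_{(\mu-j)}:=0$ for $j\ge 1$ we then get $a-\chi\sum_{j=0}^N a_{(\mu-j)}=b\in S^{-\infty}\subset S^{\mu-(N+1)}(U\times\R^q;H,\tilde{H})$ for every $N$, and the same holds with any other excision function, since two excision functions differ by a smooth, $\eta$-compactly supported function, which by the same argument is in $S^{-\infty}$. This proves $a\in S^\mu_{\textup{cl}}(U\times\R^q;H,\tilde{H})$.

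The only genuinely delicate step is the first one — producing a smooth twisted-homogeneous extension of $a$ off a neighbourhood of the origin while using the group actions only through their boundedness \eqref{normk}, never through differentiation in the dilation parameter. Once $a_{(\mu)}$ is in hand, the decomposition $a=\chi a_{(\mu)}+b$, the classical-symbol property of $\chi a_{(\mu)}$ (quoted), and the $S^{-\infty}$ bound for the $\eta$-compactly supported remainder $b$ are all routine and rely solely on \eqref{normk} and the cited observation.
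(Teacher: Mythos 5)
Your argument is correct, and it is exactly the standard elaboration of what the paper dismisses with the remark that the lemma is ``a simple consequence of the twisted symbolic estimates'': extend $a$ to a twisted-homogeneous $a_{(\mu)}$ on $U\times(\R^q\setminus\{0\})$ using a locally fixed dilation parameter (so the group actions enter only through \eqref{normk}, never differentiated), invoke the quoted observation that $\chi a_{(\mu)}\in S^\mu$, and absorb the smooth, $\eta$-compactly supported remainder into $S^{-\infty}$. All the delicate points (well-definedness and smoothness of the extension across $|\eta|=c$, validity of \eqref{1-33} for all $\delta\in\R_+$, independence of the excision function) are handled correctly, so nothing is missing.
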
  
This is a simple consequence of the twisted symbolic estimates. In the following definition we employ some terminology about analytic functionals in the complex plane. As in \cite{Horm5} by $\mathcal{A}'(K)$ for any compact set $K\subset \C$ we denote the space of analytic functionals carried by $K.$ This is a nuclear Fr\'echet space, cf. also \cite{Kapa10}. If $E$ is a Fr\'echet space, by $\mathcal{A}'(K,E)$ we denote the space of all $E$-valued analytic functionals carried by $K$ in the topology of the projective tensor product $\mathcal{A}'(K)\hat{\otimes}_\pi E. $
 \begin{Def}\label{1-37} Let $\boldsymbol{g}=(\gamma,\Theta)$ be weight data, $\gamma\in \R$, $\Theta=(\vartheta,0]$,
$-\infty\leq \vartheta<0$. For $\vartheta>-\infty$ we choose any compact set $K\subset \C$, $K\subset \{\textup{Re}\,z<\frac{n+1}{2}-\gamma\}$
and form the space $$\mathcal{E}_K:=\{\omega\langle\zeta,r^{-z}\rangle:\zeta\in\mathcal{A}^\prime(K,C^\infty(X))\}$$ for any fixed cut-off function $\omega(r)$.
Then we set $\mathcal{K}^{s,\gamma}_P(X^\wedge):=\mathcal{E}_K+\mathcal{K}^{s,\gamma}_\Theta(X^\wedge)$ in the Fr\'echet topology of the non-direct
sum \textup{(}where $\mathcal{E}_K$ is topologised via the isomorphism $\mathcal{E}_K\cong \mathcal{A}^\prime(K,C^\infty(X))$\textup{)}. We call $P$ the continuous asymptotic type associated with the set $K\cap\{\textup{Re}\,z>\frac{n+1}{2}-\gamma+\vartheta\}$.
\end{Def}
    An element $g(y,\eta)\in \bigcap_{s,e\in\R}S^\mu_{\textup{cl}}(U\times\R^q;\mathcal{K}^{s,\gamma;e}(X^\wedge),
  \mathcal{K}^{\infty,\gamma-\mu;\infty}(X^\wedge))$ is called a Green symbol of order $\mu$, associated with the weight data
  $\boldsymbol{g}=(\gamma,\gamma-\mu,\Theta)$ if
 \begin{equation}\label{g}g(y,\eta)\in S^\mu_{\textup{cl}}(U\times\R^q;\mathcal{K}^{s,\gamma;e}(X^\wedge),
  \mathcal{K}_P^{\infty,\gamma-\mu;\infty}(X^\wedge)),
\end{equation}
\begin{equation}\label{g*}g^*(y,\eta)\in S^\mu_{\textup{cl}}(U\times\R^q;\mathcal{K}^{s,-\gamma+\mu;e}(X^\wedge),
  \mathcal{K}_Q^{\infty,-\gamma;\infty}(X^\wedge))
\end{equation} for every $s,e\in \R$, for some ${g}$-dependent (continuous) asymptotic
  types $P$ and $Q$, associated with the weight data $(\gamma-\mu,\Theta)$ and $(-\gamma,\Theta)$, respectively.
  Let $R^\mu_G(U\times \R^q,\boldsymbol{g})_{P,Q}$ denote the set of all Green symbols of order $\mu$, with asymptotic types $P$, $Q$.
 In the case $\Theta=(-\infty,0]$ and trivial $P$, $Q$ (i.e. with subscript $\Theta$ instead of $P$, $Q$) the respective space of Green
 symbols is independent of the weight data $\boldsymbol{g}$ and denoted by $R^\mu_G(U\times \R^q)_{\mathcal{O}}$. Recall that Green symbols are $(y,\eta )$-wise Green operators in the cone algebra over $X^\wedge.$ Kernel characterisations of such operators may be found in \cite{Seil2}. The terminlogy comes from boundary value problems in Boutet de Monvel's calculus and the role of Green's functions in solvability of elliptic problems, see, \cite{Bout1}.\\
 \\Compared with \eqref{1-32}
 there is an alternative quantisation, here called Mellin-edge quantisation, namely, $$b(y,\eta,\lambda):=r^{-\mu}\epsilon \textup{op}_M^{\gamma-\frac{n}{2}}(h)(y,\eta,\lambda)\epsilon^\prime,$$
 cf. \cite{Gil2}. We have $a(y,\eta,\lambda),b(y,\eta,\lambda)\in S^\mu(\Omega\times \R^{q+l};\mathcal{K}^{s,\gamma}(X^\wedge),\mathcal{K}^{s-\mu,\gamma-\mu}(X^\wedge))$
 for every $s\in \R,$ and 
\begin{equation}\label{eq} a(y,\eta,\lambda)-b(y,\eta,\lambda)\in R^\mu_G(\Omega\times \R^{q+l})_{\mathcal{O}}.\end{equation} 
  \\We also need another class of operator-valued symbols, the smoothing Mellin symbols. For convenience from now on we assume $\Theta=(-1,0]$; this allows us to observe asymptotic phenomena in the weight intervals
 $\{\frac{n+1}{2}-\gamma-1<\textup{Re}\,z <\frac{n+1}{2}-\gamma\}$
 and $\{\frac{n+1}{2}-(\gamma-\mu)-1<\textup{Re}\,z
 <\frac{n+1}{2}-(\gamma-\mu)\}$, and to minimise the technical effort with sequences of lower order
 Mellin symbols which are adequate when we take $\Theta=(-(k+1),0]$
 instead. Let $V\subset \C$ be a compact set and let $M^{-\infty}_V(X)$ be the space of all $f\in\mathcal{A}(\C\setminus V,L^{-\infty}(X))$
  such that $\chi_Vf|_{\Gamma_\beta}\in \mathcal{S}(\Gamma_\beta,L^{-\infty}(X))$ for every $\beta\in \R$, uniformly in compact $\beta$-intervals.
  The space $M^{-\infty}_V(X)$ is Fr\'echet in a natural way. Under the assumption $V\cap \Gamma_{\frac{n+1}{2}-\gamma}=\emptyset$ we can form
  that the weighted Mellin operator $\textup{op}_M^{\gamma-\frac{n}{2}}(f)(y)$ for any $f\in C^\infty(\Omega,M^{-\infty}_V(X))$. Then, if $\omega$,
  $\omega^\prime$ are arbitrary cut-off functions, the operator family \begin{equation}\label{1-36}m(y,\eta,\lambda):=r^{-\mu}\omega_{\eta,\lambda}\textup{op}_M^{\gamma-\frac{n}{2}}(f)(y)\omega^\prime_{\eta,\lambda}
   \end{equation}
represents an element of
  $S^\mu_{\textup{cl}}(\Omega\times \R^{q+l};\mathcal{K}^{s,\gamma}(X^\wedge),\mathcal{K}^{\infty,\gamma-\mu}(X^\wedge))$.
 Recall that when we change $\omega$ or $\omega^\prime$ we only change $m$ by a Green symbol. Let $R_{M+G}^\mu(\Omega\times \R^{q+l},\boldsymbol{g})$
 for $\boldsymbol{g}=(\gamma,\gamma-\mu,\Theta)$ denote the set of all operator functions $(m+g)(y,\eta,\lambda)$ for $m$ as in \eqref{1-36}
 and a Green symbol $g$ as in \eqref{g}, \eqref{g*} (for $\Omega$ instead of $U$ and $q+l$ rather than $q,$ and from now on for $\Theta=(-1,0]$). Moreover, let $R^\mu(\Omega\times
 \R^{q+l},\boldsymbol{g})$ be the set all $a+m+g$ for $m+g\in R^\mu_{M+G}(\Omega\times \R^{q+l},\boldsymbol{g})$ and $a$ of the form \eqref{1-32}.
 Let us now strengthen Definition \ref{1-25} (ii) by replacing $\mathcal{K}^{s,\gamma}(X^\wedge)$ by the Fr\'echet space $\mathcal{K}_P^{s,\gamma}(X^\wedge)$
  for some continuous asymptotic type $P$. It can easily be verified that there is a representation of $\mathcal{K}_P^{s,\gamma}(X^\wedge)$ as a
  projective limit of  Hilbert spaces
  $E^j\subset E^0:=\mathcal{K}^{s,\gamma}(X^\wedge)$ with group action which allows us to form the spaces $\mathcal{W}^s(\R^q,E^j),$ and then we set
$\mathcal{W}^s(\R^q,\mathcal{K}_P^{s,\gamma}(X^\wedge))=\lim_{_{\begin{subarray}{c}\longleftarrow\\ j\in \N\end{subarray}}}\mathcal{W}^s(\R^q,E^j).$

Moreover, similarly as Definition \ref{1-25} (ii) we form $H^{s,\gamma}_P(B)$ by replacing $\mathcal{K}^{s,\gamma}(X^\wedge)$ by $\mathcal{K}_P^{s,\gamma}(X^\wedge)$. Now the space $L^{-\infty}(B,\boldsymbol{g})$ for $\boldsymbol{g}=(\gamma,\gamma-\mu,\Theta)$ is defined
to be the subset of all $C\in L^{-\infty}(B\setminus Y)$ which induce continuous operators $C:H^{s,\gamma}(B)\to H^{\infty,\gamma-\mu}_P(B),\, C^*:H^{s,-\gamma+\mu}(B)\to H^{\infty,-\gamma}_Q(B)$ for all $s\in \R$ and some $C$-dependent asymptotic types $P$, $Q$. If $L^{-\infty}(B,\boldsymbol{g})
_{P,Q}$ denotes for the moment the space of all $C$ with fixed $P,Q$, we have a Fr\'echet space, and we set
\[L^{-\infty}(B,\boldsymbol{g};\R^l)_{P,Q}:=\mathcal{S}(\R^l,L^{-\infty}(B,\boldsymbol{g})_{P,Q}), \,\, \textup{and}\,\,
L^{-\infty}(B,\boldsymbol{g};\R^l):=\bigcup_{P,Q}L^{-\infty}(B,\boldsymbol{g};\R^l)_{P,Q}.\]
\begin{Def}\label{1-95} The space $L^\mu(B,\boldsymbol{g};\R^l)$ for $\boldsymbol{g}=(\gamma ,\gamma -\mu ,\Theta )$ is defined to be the subspace of all $A(\lambda)\in
L^\mu_\textup{deg}(B,\boldsymbol{g};\R^l)$ of the form $$A(\lambda):=A_{\textup{edge}}(\lambda)+A_{\textup{int}}(\lambda)
+C(\lambda)$$ where $C(\lambda)\in L^{-\infty}(B,\boldsymbol{g};\R^l)$, $A_{\textup{int}}(\lambda)\in L^\mu_{\textup{cl}}(B\setminus Y;\R^l)$ is arbitrary, vanishing in a neighbourhood of $Y$, and $A_{\textup{edge}}(\lambda)$ is locally near $Y$ in the splitting of variables $(r,x,y)\in \R_+\times X\times \Omega$, $\Omega\subseteq \R^q$ open, $q=\textup{dim}\,Y$, of the form $\textup{Op}_y(a)(\lambda)$ for an operator-valued amplitude function $a(y,\eta,\lambda)\in
R^\mu(\Omega\times \R^{q+l},\boldsymbol{g}).$
\end{Def}
\begin{Rem}
The space $L^\mu(B,\boldsymbol{g};\R^l)$ can be represented as a union of Fr\'echet subspaces. Those are essentially
labeled by the asymptotic types $P$, $Q$ in the involved Green symbols and the Mellin asymptotic types in the smoothing
Mellin symbols, cf. the formula \eqref{1-36}.
\end{Rem}

Let us define the principal symbolic structure of operators in $L^\mu(B,\boldsymbol{g};\R^l)$, consisting of pairs
$\sigma(A)=(\sigma_0(A),\sigma_1(A)).$ The first component is the parameter-dependent principal homogeneous symbol as a function on $C^\infty(T^*(B\setminus Y)\times \R^l\setminus 0)$ where
$0$ stands for $(\tilde{\xi},\lambda)=0$ with $\tilde{\xi}$ being the covariable in $T^*((B\setminus Y)\setminus 0)$. The second
component is defined as
\begin{align*}
&\sigma_1(A)(y,\eta,\lambda):=r^{-\mu}\{\omega(|\eta,\lambda|r)\textup{op}_M^{\gamma-n/2}(h_0)(y,\eta,\lambda)\omega^\prime(|\eta,\lambda|r)
\\&+\chi(|\eta,\lambda|r)\textup{Op}_r(p_0)(y,\eta,\lambda)\chi^\prime(|\eta,\lambda|r)\}\\&+r^{-\mu}\omega(|\eta,\lambda|r)
\textup{op}_M^{\gamma-n/2}(f)(y)\omega^\prime(|\eta,\lambda|r)+g_{(\mu)}(y,\eta,\lambda),
\end{align*}
where $h_0(r,y,z,\eta,\lambda):=\tilde{h}(0,y,z,r\eta,r\lambda)$, $p_0(r,y,\rho,\eta,\lambda):=\tilde{p}(0,y,r\rho,r\eta,r\lambda)$,
$f(y,z)$ as in \eqref{1-39}, and $g_{(\mu)}(y,\eta,\lambda)$ is the principal (twisted homogeneous) symbol
 of $g(y,\eta,\lambda)$ involved in $a(y,\eta,\lambda)\in R^\mu(\Omega\times \R^{q+l},\boldsymbol{g})$, cf. also Definition \ref{1-95}.
 For fixed $\boldsymbol{g}=(\gamma ,\gamma -\mu ,\Theta )$ we also employ spaces of edge operators of order $m\leq \mu $ for $\mu -m\in \N.$ The notation is as follows. First we set
 $L^{\mu-1}(B,\boldsymbol{g};\R^l):=\{A\in L^\mu(B,\boldsymbol{g};\R^l):\sigma (A)=0\}.$
 The elements of the latter space again have a pair of principal symbols, denoted for the moment by $\sigma ^{\mu -1}(A).$ Then vanishing of $\sigma ^{\mu -1}(A)$ determines a subspace $L^{\mu-2}(B,\boldsymbol{g};\R^l),$ and so on. In this way we inductively obtain the spaces $L^m(B,\boldsymbol{g};\R^l)$ for any $m\leq \mu ,\mu -m\in \N.$
 \begin{Rem}
 Observe that for any $ A\in L^m(B,\boldsymbol{g};\R^l),\,\alpha \in \N^l$, we have $D_\lambda ^\alpha A\in L^{m-|\alpha |}(B,\boldsymbol{g};\R^l).$
\end{Rem}
\begin{Thm}\label{1-8}
Let $A\in L^m(B,\boldsymbol{g};\R^l)$, $\boldsymbol{g}=(\gamma,\gamma-\mu,\Theta),\,\mu -m\in \N,$ be
a parameter-dependent edge operator, interpreted as a family of continuous operators
$$A(\lambda): H^{s,\gamma}(B)\to H^{s-\nu,\gamma-\mu}(B)$$ for some $\nu\leq m$.
Then we have $$\Vert A(\lambda)\Vert_{\mathcal{L}(H^{s,\gamma}(B),H^{s-\nu,\gamma-\mu}(B))}\leq c
\langle \lambda\rangle^{\textup{max}\{m,m-\nu\}+M}$$ for all $\lambda\in\R^l$ for some $c=c(\nu,s)>0$,
and a certain constant $M=M(\nu,s)>0$.
\end{Thm}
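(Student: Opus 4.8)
The plan is to reduce the norm estimate to three local building blocks corresponding to the decomposition $A(\lambda)=A_{\textup{edge}}(\lambda)+A_{\textup{int}}(\lambda)+C(\lambda)$ from Definition \ref{1-95}, and to handle each via the behaviour of the defining data under rescaling in the covariable. First I would observe that $C(\lambda)\in L^{-\infty}(B,\boldsymbol{g};\R^l)=\mathcal{S}(\R^l,L^{-\infty}(B,\boldsymbol{g})_{P,Q})$ contributes a rapidly decaying term, hence poses no problem; and $A_{\textup{int}}(\lambda)\in L^\mu_{\textup{cl}}(B\setminus Y;\R^l)$ with support away from $Y$ is an ordinary parameter-dependent classical operator, for which the polynomial bound $\langle\lambda\rangle^{\max\{m,m-\nu\}}$ (up to the Sobolev-index shifts handled by the order-reducing families \eqref{1-1}) is standard. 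So the entire content is the edge part $A_{\textup{edge}}(\lambda)=\textup{Op}_y(a)(\lambda)$ with $a\in R^\mu(\Omega\times\R^{q+l},\boldsymbol{g})$.

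For the edge part the key is that $a(y,\eta,\lambda)$ is an operator-valued symbol in the sense of the twisted symbolic estimates, now with $(\eta,\lambda)\in\R^{q+l}$ as the full covariable: from \eqref{1-32}, \eqref{1-36}, \eqref{g}, \eqref{g*} one has $a\in S^\mu_{(\textup{cl})}(\Omega\times\R^{q+l};\mathcal{K}^{s,\gamma}(X^\wedge),\mathcal{K}^{s-\mu,\gamma-\mu}(X^\wedge))$ for every $s$, and more precisely $a\in S^m$ after the $\sigma$-vanishing that defines $L^m$. The estimate then follows by writing, on $\mathcal{W}^s(\R^q,\mathcal{K}^{\cdot,\cdot}(X^\wedge))$,
\begin{equation*}
\widehat{A_{\textup{edge}}(\lambda)u}(\eta)=\kappa_{\langle\eta\rangle}\Big\{\kappa_{\langle\eta\rangle}^{-1}a(y,\eta,\lambda)\kappa_{\langle\eta\rangle}\Big\}\kappa_{\langle\eta\rangle}^{-1}\hat{u}(\eta),
\end{equation*}
inserting the symbol estimate $\|\tilde\kappa_{\langle\eta\rangle}^{-1}a(y,\eta,\lambda)\kappa_{\langle\eta\rangle}\|\le c\langle(\eta,\lambda)\rangle^{m}$, and using $\langle(\eta,\lambda)\rangle^{m}\le c\,\langle\eta\rangle^{m}\langle\lambda\rangle^{\max\{m,0\}}$ together with $\langle(\eta,\lambda)\rangle^{m}\le c\,\langle\eta\rangle^{m-\nu}\langle\lambda\rangle^{\max\{m,m-\nu\}}\langle\eta\rangle^{\nu}$, so that the $\langle\eta\rangle$-weights are absorbed into the $\mathcal{W}^{s}\to\mathcal{W}^{s-\nu}$ norms and a clean power $\langle\lambda\rangle^{\max\{m,m-\nu\}}$ is left over. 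The loss $M$ enters exactly because passing $\kappa_{\langle\eta\rangle}$ past the $\eta$-dependence and comparing $\langle(\eta,\lambda)\rangle$ with $\langle\eta\rangle\langle\lambda\rangle$ costs powers governed by \eqref{normk}; this accounts for the undetermined constant $M=M(\nu,s)$.

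The step I expect to be the main obstacle is making the splitting $\langle(\eta,\lambda)\rangle^{m}\lesssim\langle\eta\rangle^{m-\nu}\langle\lambda\rangle^{\max\{m,m-\nu\}}\langle\eta\rangle^{\nu}$ interact correctly with the \emph{twisted} action, i.e. controlling $\|\tilde\kappa_{\langle\eta\rangle}^{-1}\kappa_{\langle(\eta,\lambda)\rangle}\|$ and $\|\kappa_{\langle(\eta,\lambda)\rangle}^{-1}\kappa_{\langle\eta\rangle}\|$ by $\max\{\langle\lambda\rangle,\langle\eta\rangle^{-1}\langle(\eta,\lambda)\rangle\}^{M}$ etc., using \eqref{normk}, and doing this uniformly in $y$ over a compact set and globally over $B$ via a partition of unity that glues the model-cone estimates to the smooth-edge charts; the weighted spaces $H^{s,\gamma}(B)$ are defined chartwise in Definition \ref{1-25}(ii), so one also checks that the gluing (cut-off functions $\omega,\psi$, transition maps) does not reintroduce $\lambda$-dependence beyond $L^{-\infty}(B,\boldsymbol{g};\R^l)$, which is harmless. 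Once these book-keeping estimates are in place the theorem follows; the argument is essentially the parameter-dependent refinement of the continuity statement for edge operators, tracking the covariable $\lambda$ through every twisted symbolic estimate.
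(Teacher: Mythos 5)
First, a point of reference: the paper does not prove Theorem \ref{1-8} at all --- it refers the reader to \cite{Abed1} --- so your proposal can only be measured against the standard argument, which is indeed the one you outline: split $A=A_{\textup{edge}}+A_{\textup{int}}+C$ according to Definition \ref{1-95}, dispose of $C$ by rapid decay in $\lambda$ and of $A_{\textup{int}}$ by the classical parameter-dependent estimate, and reduce $A_{\textup{edge}}=\textup{Op}_y(a)$ to the twisted symbol estimate in the joint covariable $(\eta,\lambda)$, with the extra power $\langle\lambda\rangle^{M}$ produced by comparing $\kappa_{\langle\eta\rangle}$ with $\kappa_{\langle(\eta,\lambda)\rangle}$ via \eqref{normk}. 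That is the right mechanism, and your identification of the source of the constant $M$ is exactly correct.

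There is, however, one step that does not go through as written. After the factor $\langle\eta\rangle^{\nu}$ in your splitting $\langle(\eta,\lambda)\rangle^{m}\lesssim\langle\eta\rangle^{m-\nu}\langle\lambda\rangle^{\max\{m,m-\nu\}}\langle\eta\rangle^{\nu}$ is used to pass from $\mathcal{W}^{s}$ to $\mathcal{W}^{s-\nu}$, the residual weight $\langle\eta\rangle^{m-\nu}$ remains, and it is unbounded in $\eta$ precisely when $\nu<m$ --- which is the regime the hypothesis ``$\nu\leq m$'' literally covers. No symbol estimate can repair this: already the scalar multiplier $\langle(\eta,\lambda)\rangle^{m}$ fails to map $H^{s}$ into $H^{s-\nu}$ for $\nu<m$, so for a general element of $L^{m}$ the asserted operator norm need not even be finite there, and the continuity assumed in the hypothesis would then be an extra assumption from which no quantitative bound follows. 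Your chain of inequalities actually proves the estimate in the regime $\nu\geq m$, where $\langle(\eta,\lambda)\rangle^{m}\leq c\,\langle\eta\rangle^{\nu}\langle\lambda\rangle^{\max\{m,m-\nu\}}$ holds and the target $H^{s-\nu,\gamma-\mu}(B)$ contains the natural one $H^{s-m,\gamma-\mu}(B)$; this is also the only regime in which the theorem is invoked later (in the proof of Proposition \ref{1-7} the order $m-N$ is made very negative while the smoothness drop $s'-s''$ stays fixed). You should therefore either restrict to $\nu\geq m$, matching what your argument delivers and what the application requires, or explain how $\langle\eta\rangle^{m-\nu}$ is to be absorbed when $\nu<m$; as it stands the word ``absorbed'' hides a divergence. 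A smaller point: your displayed multiplier identity for $\widehat{A_{\textup{edge}}(\lambda)u}(\eta)$ is literal only for $y$-independent symbols; the $y$-dependent case needs the standard continuity argument for $\textup{Op}_y$ of operator-valued symbols on $\mathcal{W}^{s}$-spaces, which you gesture at but should not leave implicit.
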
 The proof is given in \cite{Abed1}.
The following theorem compares growth properties of Fourier- and Mellin-based weighted spaces at infinity as well as at zero in terms of certain operator-valued symbols, cf. also Section 5 below for the corner case.
\begin{Thm}\label{1-66} Let $\omega$ be a cut-off function.
\begin{enumerate}
\item[\textup{(i)}] For every $s,\gamma,\gamma^\prime ,L\in \R$ there is a $g=g(L)$ such that
$(1-\omega_\eta)r^{-L}\in S^L_{\textup{cl}}(\R^q;\mathcal{H}^{s,\gamma}(X^\wedge),\mathcal{K}^{s,\gamma^\prime;g}(X^\wedge)).$
\item[\textup{(ii)}] For every $s,\gamma,\gamma^\prime,g\in \R$ there is an $L=L(g)$
such that $(1-\omega_\eta)r^{-L}\in S^L_\textup{cl}(\R^q;\mathcal{K}^{s,\gamma;g}(X^\wedge),\mathcal{H}^{s,\gamma^\prime}(X^\wedge)).$
\item[\textup{(iii)}]For every $s,\gamma',g^\prime, L \in \R$ we have $\omega_\eta r^L\in S^{-L}_{\textup{cl}}(\R^q; \mathcal{H}^{s,\gamma^\prime}(X^\wedge),\mathcal{K}^{s,\gamma^\prime+L;g^\prime}(X^\wedge)).$
\item[\textup{(iv)}]For every $s,\gamma,g\in \R$ and $L\geq 0$ we have $\omega_\eta r^L\in S^{-L}_{\textup{cl}}(\R^q; \mathcal{K}^{s,\gamma;g}(X^\wedge),\mathcal{H}^{s,\gamma}(X^\wedge)).$
\end{enumerate}
\end{Thm}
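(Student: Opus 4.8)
The plan is to reduce all four statements to Lemma \ref{1-80}. In each case the operator family in question -- call it $a(\eta)$, so $a(\eta)=(1-\omega_\eta)r^{-L}$ in (i), (ii) and $a(\eta)=\omega_\eta r^L$ in (iii), (iv) -- is a multiplication operator, and I would verify only two things: that $a\in C^\infty(\R^q,\mathcal{L}(H,\tilde H))$ with $a(\eta):H\to\tilde H$ bounded for the respective spaces $H,\tilde H$, and that $a$ obeys the twisted homogeneity relation for $|\eta|$ large; Lemma \ref{1-80} then yields membership in the asserted $S^\mu_{\textup{cl}}$.

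The homogeneity is immediate. All the spaces $\mathcal{H}^{s,\gamma}(X^\wedge)$, $\mathcal{K}^{s,\gamma}(X^\wedge)$, $\mathcal{K}^{s,\gamma;e}(X^\wedge)$ carry the group action $(\kappa_\delta u)(r,x)=\delta^{(n+1)/2}u(\delta r,x)$ (on the weighted version $\langle r\rangle^{-e}$ commutes with $\kappa_\delta$ only up to bounded operators, which is irrelevant for the identity below), and for $|\eta|\ge c$, $\lambda\ge1$ one has $[\lambda\eta]=\lambda[\eta]$. A one-line computation then gives $\kappa_\lambda^{-1}\,\omega_{\lambda\eta}r^L\,\kappa_\lambda=\lambda^{-L}\,\omega_\eta r^L$ and $\kappa_\lambda^{-1}\,(1-\omega_{\lambda\eta})r^{-L}\,\kappa_\lambda=\lambda^{L}\,(1-\omega_\eta)r^{-L}$ (the factors $\delta^{(n+1)/2}$ cancelling), i.e. $\omega_\eta r^L$ is twisted homogeneous of order $-L$ and $(1-\omega_\eta)r^{-L}$ of order $L$, matching $S^{-L}$ in (iii), (iv) and $S^{L}$ in (i), (ii).

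For the $\eta$-wise continuity I would treat the region near $r=0$ and the region near $r=\infty$ separately, using as basic facts: multiplication by $r^{a}$ is an isomorphism $\mathcal{H}^{s,\gamma}(X^\wedge)\to\mathcal{H}^{s,\gamma+a}(X^\wedge)$ for every $a\in\R$ (on the Mellin side \eqref{1-34} this is the shift $M(r^{a}u)(z)=(Mu)(z+a)$, an isometry after $z\mapsto z+a$); any cut-off function maps $\mathcal{K}^{s,\gamma}(X^\wedge)$ continuously into $\mathcal{H}^{s,\gamma}(X^\wedge)$; and a function supported near $r=0$ lies in $\mathcal{H}^{s,\gamma'}(X^\wedge)$ iff it lies in $\mathcal{H}^{s,\gamma}(X^\wedge)$ for $\gamma'\ge\gamma$. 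Since $\omega_\eta$ is supported in $r\le c_1/[\eta]$, statement (iii) follows by first shifting the weight of $\mathcal{H}^{s,\gamma'}(X^\wedge)$ by $L$ and then cutting off near $0$, the resulting compactly supported element lying in $\mathcal{K}^{s,\gamma'+L;g'}(X^\wedge)$ for every $g'$; and (iv) follows the same way, the hypothesis $L\ge0$ being precisely what makes $\mathcal{H}^{s,\gamma+L}(X^\wedge)\hookrightarrow\mathcal{H}^{s,\gamma}(X^\wedge)$ near $0$. Uniformity and smoothness in $\eta$ come from the fact that $\omega(r\tau)$ is a bounded family of multipliers for $\tau\ge\tau_0>0$ -- applying $r\partial_r$ (and, for $s\in\N$, the operators $D^\alpha_j$ of Proposition \ref{1-79}, which are dilation invariant by \eqref{1-82}) only produces $\omega^{(k)}(r\tau)$ together with $(r\tau)^k$, with no free power of $\tau$ -- while $\partial_\eta^\beta\omega_\eta$ with $\beta\neq0$ is in addition supported in a compact $r$-annulus.

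The case of $(1-\omega_\eta)r^{-L}$ in (i), (ii) is where the behaviour at $r\to\infty$ becomes decisive, and I expect this to be the main obstacle: one must compare, at infinity, the Mellin weight carried by $\mathcal{H}^{s,\gamma}(X^\wedge)$ with the polynomial weight $\langle r\rangle^{-g}$ built into $\mathcal{K}^{s,\gamma;g}(X^\wedge)$, keeping track of the fact that on $H^s_{\textup{cone}}(X^\wedge)$ the transversal derivatives come with an extra factor $[r]^{-1}$ (by \eqref{1-86}) whereas the Mellin weight treats $r\partial_r$ and $\partial_x$ on the same footing (Proposition \ref{1-91}). Combining this with the weight shift $r^{\mp L}$ one reads off that $(1-\omega_\eta)r^{-L}:\mathcal{H}^{s,\gamma}(X^\wedge)\to\mathcal{K}^{s,\gamma';g}(X^\wedge)$ holds for $g\le L-\gamma$, so one may take $g=g(L):=L-\gamma$ in (i); and that $(1-\omega_\eta)r^{-L}:\mathcal{K}^{s,\gamma;g}(X^\wedge)\to\mathcal{H}^{s,\gamma'}(X^\wedge)$ holds once $L$ is large enough to absorb the accumulated weights $[r]^{|\alpha|}$, $|\alpha|\le s$, say $L=L(g):=s-g-\gamma'$ in (ii). On the compact annulus where $1-\omega_\eta$ is neither $0$ nor $1$ it is a harmless bounded multiplier, and uniformity/smoothness in $\eta$ are as before. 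For general $s\in\R$ I would run the same arguments directly from the definitions of the spaces (Mellin transform for the weight shifts, \eqref{1-86} for $H^s_{\textup{cone}}$), the integer case being the guide; with $a\in C^\infty(\R^q,\mathcal{L}(H,\tilde H))$ and the homogeneity in hand, Lemma \ref{1-80} finishes all four cases.
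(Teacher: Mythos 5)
The paper does not actually prove Theorem \ref{1-66}; it defers entirely to \cite[Section A3]{Gil2}, so there is no in-text argument to compare yours against. Your reconstruction follows the strategy one would expect (and which the paper itself uses for the corner analogue, Theorem \ref{1-18}): reduce to Lemma \ref{1-80} via the exact twisted homogeneity $a(\lambda\eta)=\lambda^{\mp L}\kappa_\lambda a(\eta)\kappa_\lambda^{-1}$ for $\lambda\geq 1$, $|\eta|\geq c$ --- your computation of this is correct --- and then establish $\eta$-wise continuity by comparing the Mellin weight of $\mathcal{H}^{s,\cdot}$ with the polynomial weight of $\mathcal{K}^{s,\cdot;\cdot}$ separately near $r=0$ and $r=\infty$. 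The treatment of (iii) and (iv) is sound.

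Two soft spots remain. First, your explicit choices $g(L)=L-\gamma$ and $L(g)=s-g-\gamma'$ do not consistently incorporate the very phenomenon you flag, namely that the cone norm at infinity weights an $|\alpha|$-th order derivative by an extra $[r]^{|\alpha|}$ (Proposition \ref{1-79}) while $\mathcal{H}^{s,\gamma}$ does not (Proposition \ref{1-91}); for $s>0$ the admissible range in (i) is roughly $g\leq L-\gamma-s$ rather than $g\leq L-\gamma$. Since the theorem only asserts existence of some $g(L)$ resp. $L(g)$, this is a numerical slip, not a structural one. Second, and more substantively, your passage from $s\in\N$ to general $s\in\R$ (``run the same arguments directly from the definitions'') is not an argument: the derivative characterisations you rely on are unavailable for non-integer $s$, and working directly with the order-reducing families in \eqref{1-34} and \eqref{1-35} is not a one-line matter. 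The standard repair, which the paper carries out explicitly in its proof of the corner analogue Theorem \ref{1-18}, is to prove the integer case, obtain $-s\in\N$ by duality with respect to the $\mathcal{K}^{0,0}$- and $\mathcal{H}^{0,0}$-pairings, and then use complex interpolation in $(s,\gamma)$. With that step spelled out, your proof is complete.
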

A proof is given in \cite[Section A3]{Gil2}.
\section{Weighted corner spaces}
Our next objective is to study weighted corner Sobolev spaces from the point of view of the iterative approach. The corner pseudo-differential calculus is an adequate choice of weighted
Sobolev spaces. Roughly speaking the former smooth manifold $X$ is
replaced by a compact manifold $B$ with edge $Y$ and we define our spaces over $\R_+\times B:=B^\wedge$.
The considerations refer to the representation of $B$ locally near $Y$ in the form $X^\Delta\times U$
for a coordinate neighbourhood $U$ of $Y$, diffeomorphic to $\R^q$, and we argue in terms of the stretched
variables $(r,x,y)\in X^\wedge\times \R^q$.  Recall that on the double $2\mathbb{B}$ of the stretched
manifold $B$ we have the spaces $\mathcal{H}^{s,\gamma_2}(2\mathbb{B})$. Let us interpret $1-\omega$ also
as a function on $2\mathbb{B}$ vanishing on the counterpart of $\mathbb{B}$ in the definition of the double.
We need an analogue of the edge spaces of Definition \ref{1-25}
for $\R_+\times \R^q\ni (t,y)$ and the Mellin transform in $t$ instead of the Fourier transform. Let
$\mathcal{W}^{s,\gamma_2}(\R_+\times\R^q,H)$ defined to be the completion of $C_0^\infty(\R_+\times\R^q,H)$
with respect to the norm
\[\Vert u\Vert_{\mathcal{W}^{s,\gamma_2}(\R_+\times\R^q,H)}=\Big\{\int_{\Gamma_{(b+1)/2-\gamma_2}}\int_{\R^q}
\langle w,\eta\rangle^{2s}\Vert\varkappa^{-1}_{\langle w,\eta\rangle}(M_{t\to w}\mathcal{F}_{y\to\eta} u)(w,\eta)\Vert^2_{H}\dbar wd\eta\Big\}^{1/2}\]
for a dimension $b$, known from the context (below $b=n+1+q$).
More generally, analogously as Lemma \ref{1-92} we have the following characterisation.
\begin{Lem}\label{2-16}
Let $\varkappa$ be the trivial group action in the space $H$. Then, denoting the respective version of $\mathcal{W}^{s,\gamma_2}$-spaces
by $\mathcal{W}_1^{s,\gamma_2}$, we have continuous embeddings 
$\mathcal{W}_1^{s+M,\gamma_2}(\R_+\times\R^q,H)\hookrightarrow\mathcal{W}^{s,\gamma_2}(\R_+\times\R^q,H)
\hookrightarrow\mathcal{W}_1^{s-M,\gamma_2}(\R_+\times\R^q,H),\,\mathcal{W}^{s+M,\gamma_2}(\R_+\times\R^q,H)\hookrightarrow\mathcal{W}_1^{s,\gamma_2}(\R_+\times\R^q,H)
\hookrightarrow\mathcal{W}^{s-M,\gamma_2}(\R_+\times\R^q,H),\, s\in \R$, for the same constant $M$ as in Lemma \textup{\ref{1-92}}.
\end{Lem}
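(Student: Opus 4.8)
The plan is to reproduce verbatim the argument behind Lemma \ref{1-92}, with the Mellin covariable $w$ on the fixed weight line $\Gamma_{(b+1)/2-\gamma_2}$ simply adjoined to the Fourier covariable $\eta$. The only pointwise ingredient needed is \eqref{normk}. First I would record that on the whole domain of integration one has $\langle w,\eta\rangle\geq 1$, so that $\max\{\langle w,\eta\rangle,\langle w,\eta\rangle^{-1}\}=\langle w,\eta\rangle$ and \eqref{normk} yields both $\|\varkappa^{-1}_{\langle w,\eta\rangle}\|_{\mathcal{L}(H)}\leq c\langle w,\eta\rangle^{M}$ and $\|\varkappa_{\langle w,\eta\rangle}\|_{\mathcal{L}(H)}\leq c\langle w,\eta\rangle^{M}$, with $M$ the constant from \eqref{normk}. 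Writing $v(w,\eta):=(M_{t\to w}\mathcal{F}_{y\to\eta}u)(w,\eta)\in H$ and combining the second estimate with $\|v\|_H=\|\varkappa_{\langle w,\eta\rangle}\varkappa^{-1}_{\langle w,\eta\rangle}v\|_H$, this gives the fibrewise two-sided bound
\[c^{-1}\langle w,\eta\rangle^{-M}\|v(w,\eta)\|_H\leq \|\varkappa^{-1}_{\langle w,\eta\rangle}v(w,\eta)\|_H\leq c\,\langle w,\eta\rangle^{M}\|v(w,\eta)\|_H.\]

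Next I would multiply through by $\langle w,\eta\rangle^{2s}$ and integrate $\dbar w\,d\eta$ over $\Gamma_{(b+1)/2-\gamma_2}\times\R^q$. Recognising the resulting integrals as the squared norms of Definition \ref{1-25}(i) and of its Mellin analogue (with $\varkappa$ trivial for the $\mathcal{W}_1^{s,\gamma_2}$-versions), this turns the pointwise inequalities into
\[c^{-2}\|u\|^2_{\mathcal{W}_1^{s-M,\gamma_2}(\R_+\times\R^q,H)}\leq \|u\|^2_{\mathcal{W}^{s,\gamma_2}(\R_+\times\R^q,H)}\leq c^{2}\|u\|^2_{\mathcal{W}_1^{s+M,\gamma_2}(\R_+\times\R^q,H)},\]
and, applying this with $s$ replaced by $s+M$ and by $s-M$, the companion chain $c^{-2}\|u\|^2_{\mathcal{W}^{s-M,\gamma_2}}\leq\|u\|^2_{\mathcal{W}_1^{s,\gamma_2}}\leq c^{2}\|u\|^2_{\mathcal{W}^{s+M,\gamma_2}}$. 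These estimates hold first on the dense subspace $C_0^\infty(\R_+\times\R^q,H)$ entering the completions, hence extend to the completed spaces, giving exactly the four asserted continuous embeddings.

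I do not expect a genuine obstacle here, which is why I would simply call the statement an evident consequence of the definitions, just as Lemma \ref{1-92} is: the spaces $\mathcal{W}^{s,\gamma_2}$ and $\mathcal{W}_1^{s,\gamma_2}$ differ only in whether $\varkappa^{-1}_{\langle w,\eta\rangle}$ is inserted before the $H$-norm is taken, and this operator is polynomially bounded in $\langle w,\eta\rangle$ in both directions, so inserting or deleting it costs at most a shift of $s$ by $M$. The one item worth a line of verification is that the combined transform $M_{t\to w}\mathcal{F}_{y\to\eta}$, restricted to the weight line $\Gamma_{(b+1)/2-\gamma_2}$, is (up to a fixed constant) an isometry onto the corresponding $L^2$-space of $H$-valued functions in $(w,\eta)$, so that the two norms really are compared fibrewise by the above estimate; but this is already built into the definition of $\mathcal{W}^{s,\gamma_2}(\R_+\times\R^q,H)$.
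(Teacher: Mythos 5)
Your argument is correct and is exactly what the paper intends: Lemma \ref{2-16} is stated as the analogue of Lemma \ref{1-92}, which the paper calls an evident consequence of Definition \ref{1-25}(i), and your fibrewise comparison of the norms via the polynomial bound \eqref{normk} on $\varkappa_{\langle w,\eta\rangle}^{\pm 1}$ (using $\langle w,\eta\rangle\geq 1$), followed by integration over $\Gamma_{(b+1)/2-\gamma_2}\times\R^q$ and passage to the completions, is precisely that argument. No discrepancy with the paper's (implicit) proof.
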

Observe that the weight $\gamma_2$ is multiplicative
in the sense that
\begin{equation}\label{1-69}
\mathcal{W}^{s,\gamma_2+\beta}(\R_+\times \R^q,H)=t^\beta\mathcal{W}^{s,\gamma_2}(\R_+\times\R^q,H)
\end{equation}  for any $\beta\in\R$.
Note that when the group action $\varkappa$ is unitary in the space $H$ we have
\begin{equation}
\mathcal{W}^{0,0}(\R_{+,t}\times \R^q,H)=t^{-b/2}L^2(\R_{+,t}\times \R^q,H).
\end{equation}In particular, this is the case for $H=\mathcal{K}^{0,0}(X^\wedge)=r^{-n/2}L^2(\R_{+,r}\times X)$.
Then it follows that
\begin{equation}\label{1-88}
\mathcal{W}^{0,0}(\R_{+,t}\times \R^q,\mathcal{K}^{0,0}(X^\wedge))=t^{-b/2}r^{-n/2}L^2(\R_{+,t}\times \R_{+,r}\times \R^q\times X).
\end{equation}
\begin{Prop}
The transformation \[S_{\gamma_2-b/2}u(t):=e^{((b+1)/2-\gamma_2)\boldsymbol{t}}u(e^{-\boldsymbol{t}}),\,\,
S_{\gamma_2-b/2}: C_0^\infty(\R_+,H)\to C_0^\infty(\R,H),\] extends to an isomorphism
\begin{equation}\label{2-6}\mathcal{W}^{s,\gamma_2}(\R_+\times\R^q,H)\to \mathcal{W}^s(\R\times \R^q,H)\end{equation} for every $s,\gamma_2 \in \R$
\textup{(}concerning the space on the right of the latter relation, cf. Definition \textup{\ref{1-25})}.
\end{Prop}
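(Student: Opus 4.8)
The plan is to reproduce, at the level of the corner axis variable $t$, the argument behind the cone-level isomorphism \eqref{1-77}. The point is that $S_{\gamma_2-b/2}$ is built from the substitution $t=e^{-\boldsymbol{t}}$ (together with a power of $t$), under which the weighted Mellin transform $M_{t\to w}$ becomes the one-dimensional Fourier transform in $\boldsymbol{t}$ and the weight line $\Gamma_{(b+1)/2-\gamma_2}$ is carried onto the real $\tau$-axis. Since $S_{\gamma_2-b/2}$ acts only in $t$, it commutes with $\mathcal{F}_{y\to\eta}$ and with every $\varkappa_\delta$ (the latter being linear and acting fibrewise in $H$), so the whole construction reduces to this one-variable substitution with the $y$-variable, the $\eta$-variable, and the fibre $H$ merely carried along — exactly as $L^2(X)$ is carried along in \eqref{1-77}.

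Concretely, I would first verify on $C_0^\infty(\R_+,H)$ the $H$-valued Mellin--Plancherel identity: parametrising $\Gamma_{(b+1)/2-\gamma_2}$ by $w=\tfrac{b+1}{2}-\gamma_2+i\tau$, the function $w\mapsto (M_{t\to w}u)(w)$ corresponds, via this parametrisation, to $\tau\mapsto(\mathcal{F}_{\boldsymbol{t}\to\tau}(S_{\gamma_2-b/2}u))(\tau)$ up to a fixed multiplicative constant (and, depending on conventions, a reflection $\tau\mapsto-\tau$, immaterial for the $L^2$ norm), while $\dbar w$ along the line corresponds to $\dbar\tau$ on $\R$. Appending the edge variable via $\mathcal{F}_{y\to\eta}$, this gives $(M_{t\to w}\mathcal{F}_{y\to\eta}u)(w,\eta)=(\mathcal{F}_{(\boldsymbol{t},y)\to(\tau,\eta)}(S_{\gamma_2-b/2}u))(\tau,\eta)$ up to the same constant. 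This is just the scalar computation behind \eqref{1-77} with $L^2(X)$ replaced by $H$.

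Next I would compare the two integrands. Because $\textup{Re}\,w$ is held fixed, $\langle w,\eta\rangle\sim\langle\tau,\eta\rangle$ with constants depending only on $b,\gamma_2$, hence $\langle w,\eta\rangle^{2s}\sim\langle\tau,\eta\rangle^{2s}$ for every $s\in\R$; and writing $\varkappa^{-1}_{\langle w,\eta\rangle}=\varkappa_{\langle\tau,\eta\rangle/\langle w,\eta\rangle}\varkappa^{-1}_{\langle\tau,\eta\rangle}$ with $\langle\tau,\eta\rangle/\langle w,\eta\rangle$ bounded above and below, the estimate \eqref{normk} yields $\|\varkappa^{-1}_{\langle w,\eta\rangle}\xi\|_H\sim\|\varkappa^{-1}_{\langle\tau,\eta\rangle}\xi\|_H$ uniformly in $(\tau,\eta)$ and $\xi\in H$. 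Feeding these equivalences and the identity of the previous paragraph into the definition of $\|\cdot\|_{\mathcal{W}^{s,\gamma_2}(\R_+\times\R^q,H)}$ and comparing with Definition \ref{1-25}(i) gives $\|S_{\gamma_2-b/2}u\|_{\mathcal{W}^s(\R\times\R^q,H)}\sim\|u\|_{\mathcal{W}^{s,\gamma_2}(\R_+\times\R^q,H)}$ for all $u\in C_0^\infty(\R_+\times\R^q,H)$ and all $s,\gamma_2\in\R$.

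To conclude, $S_{\gamma_2-b/2}$ maps $C_0^\infty(\R_+\times\R^q,H)$ bijectively onto $C_0^\infty(\R\times\R^q,H)$, its inverse being the substitution $t=e^{-\boldsymbol{t}}$ composed with multiplication by the reciprocal power of $t$, which manifestly preserves smoothness and compactness of support in $(0,\infty)\times\R^q$. These two spaces being dense in $\mathcal{W}^{s,\gamma_2}(\R_+\times\R^q,H)$ (by its very definition) and in $\mathcal{W}^s(\R\times\R^q,H)$ (since $C_0^\infty$ is dense in $\mathcal{S}$) respectively, a bijection between them that preserves norms up to equivalence extends uniquely to a topological isomorphism of the completions; the Fréchet case follows by passing to the projective limit. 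The only step that is not pure bookkeeping is the passage from an isometry — as one has in \eqref{1-77}, where there is no group action — to an isomorphism, and this is handled precisely by the uniform bound \eqref{normk} on the powers of $\varkappa$.
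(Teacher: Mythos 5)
Your argument is correct and is exactly the route the paper has in mind: the paper's "proof" is a one-line remark that the statement follows as for \eqref{1-77} from the Mellin--Fourier relationship, and you have simply filled in the details (Mellin--Plancherel on the weight line, $\langle w,\eta\rangle\sim\langle\tau,\eta\rangle$ for fixed $\textup{Re}\,w$, the group-action comparison via \eqref{normk}, and density). The only cosmetic point is the sign of the exponential weight factor, which in the paper's statement differs from the convention in \eqref{1-77}; your computation uses the convention under which the weight line is carried to the real axis, which is clearly the intended one.
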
 The proof follows in an analogous manner as for \eqref{1-77}, based on the relationship between Fourier and
Mellin transform. \\
\\ Let us now turn to an analogue of the spaces $\mathcal{H}^{s,\gamma}(X^\wedge)$ from Section 1 for a compact manifold
$B$ with smooth edge.
\begin{Def}\label{2-8}
By $\mathcal{H}^{s,(\gamma_1,\gamma_2)}(B^\wedge)$ for any $s,\gamma_1,\gamma_2\in \R$ we denote the completion of $C_0^\infty(\R_+\times (B\setminus Y))$
with respect to the norm
\begin{equation}\label{2-21}\| u\|_{\mathcal{H}^{s,(\gamma_1,\gamma_2)}(B^\wedge)}=\big\{\|(1-\omega)u\|^2_{\mathcal{H}^{s,\gamma_2}((2\B)^\wedge)}
+\sum_{j=1}^N\|\varphi_j\omega u\|^2_{\mathcal{W}^{s,\gamma_2}(\R_+\times \R^q,\mathcal{K}^{s,\gamma_1}(X^\wedge))}\big\}^{1/2}\end{equation}
for any cut-off function $\omega=\omega(r)$.
\end{Def}
Here, for notational convenience $\varphi_j$ is identified with an element in $C_0^\infty(\R^q)$. Moreover, $\omega$ is a cut-off
function on $B$ close to $Y$, in the local splitting of variables $(r,x,y)$ an $\omega(r)$, and the number $b:=\textup{dim}\,B=n+1+q$.
Note that when $a(\tau,\eta)\in S^\mu(\Gamma_{(b+1)/2-\gamma_2}\times \R^q;H,\tilde{H})$
(for Hilbert spaces $H$ and $\tilde{H}$ with group actions $\varkappa$ and $\tilde{\varkappa}$, respectively)
we obtain continuous operators
\[\textup{op}_{M_t}^{\gamma_2-b/2}\textup{Op}_y(a):\mathcal{W}^{s,\gamma_2}(\R_+\times\R^q,H)\to \mathcal{W}^{s-\mu,\gamma_2}(\R_+\times\R^q,\tilde{H})\]
for every $s\in \R$. 
Recall from \eqref{0-6} in the case $2\B$ rather than $X$ that we have
$\mathcal{H}^{0,0}((2\B)^\wedge)=t^{-b/2}L^2(\R_+\times 2\B).$ Moreover,
since $\varkappa$ is unitary on the space $\mathcal{K}^{0,0}(X^\wedge)$ we
have the relation \eqref{1-88}. Thus $u\in \mathcal{H}^{0,(0,0)}(B^\wedge)$
means that
\begin{equation}\label{2-20}
\|u\|_{\mathcal{H}^{0,(0,0)}(B^\wedge)}=\big\{\|(1-\omega)u\|^2_{t^{-b/2}L^2(\R_+\times 2\B)}\\
+\sum_{j=1}^N\|\varphi_j\omega u\|^2_{t^{-b/2}r^{-n/2}L^2(\R_+\times \R_+\times \R^q\times X)}\big\}^{1/2}
\end{equation} is finite. The norm in $\mathcal{H}^{0,(0,0)}(B^\wedge)$ is generated by a corresponding
scalar product
\begin{equation}\label{1-89}
\begin{split}
(u,v)_{\mathcal{H}^{0,(0,0)}(B^\wedge)}&=((1-\omega)u,(1-\omega)v)_{t^{-b/2}L^2(\R_+\times 2\B)}\\
&+\sum_{j=1}^N(\varphi_j\omega u,\varphi_j\omega v)_{t^{-b/2}r^{-n/2}L^2(\R_+\times \R_+\times \R^q\times X)}.
\end{split}
\end{equation}
\begin{Rem}
The transformations
\begin{equation}\label{2-9}
\psi_\lambda:u(t,\cdot)\to \lambda^{\frac{b+1}{2}}u(\lambda t,\cdot),\quad \lambda\in \R_+,
\end{equation} define a strongly continuous group of isomorphisms
$\psi_\lambda: \mathcal{H}^{s,(\gamma_1,\gamma_2)}(B^\wedge) \to \mathcal{H}^{s,(\gamma_1,\gamma_2)}(B^\wedge)$
for any $s, \gamma_1, \gamma_2\in \R$. The operators $\psi_\lambda$ are unitary on $\mathcal{H}^{0,(0,0)}(B^\wedge)$.
\end{Rem}
\begin{Rem}\label{2-37}
The scalar product in $\mathcal{H}^{0,(0,0)}(B^\wedge)$ extends
$(\cdot,\cdot):C_0^\infty(\R_+\times(B\setminus Y))\times C_0^\infty(\R_+\times (B\setminus Y))\to \C$
to a non-degenerate sesquilinear pairing
$$\mathcal{H}^{s,(\gamma_1,\gamma_2)}(B^\wedge)\times \mathcal{H}^{-s,(-\gamma_1,-\gamma_2)}(B^\wedge)\to \C$$
for every $s,\gamma_1,\gamma_2\in \R$.
\end{Rem}
\begin{Thm}\label{2-39}
For any real $s_0\leq s_1$, $\gamma_{1,0}\leq \gamma_{1,1}$, $\gamma_{2,0}\leq \gamma_{2,1}$
and $0\leq \theta\leq 1$ we have
\[[\mathcal{H}^{s_0,(\gamma_{1,0},\gamma_{2,0})}(B^\wedge),\mathcal{H}^{s_1,(\gamma_{1,1},\gamma_{2,1})}(B^\wedge)]_\theta=
\mathcal{H}^{s,(\gamma_1,\gamma_2)}(B^\wedge)\] for $s=(1-\theta)s_0+\theta s_1$, $\gamma_1=(1-\theta)\gamma_{1,0}+\theta \gamma_{1,1}$,
 $\gamma_2=(1-\theta)\gamma_{2,0}+\theta \gamma_{2,1}$.
\end{Thm}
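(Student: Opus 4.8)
The strategy is to reduce the complex interpolation of the weighted corner space $\mathcal{H}^{s,(\gamma_1,\gamma_2)}(B^\wedge)$ to known interpolation identities for the building blocks, using the two transformations already available in the text: the Mellin axis flattening $S_{\gamma_2-b/2}$ of \eqref{2-6}, which turns the $\mathcal{W}^{s,\gamma_2}(\R_+\times\R^q,H)$-norm into a $\mathcal{W}^s(\R\times\R^q,H)$-norm, and the cone axis flattening $S_{\gamma_1-n/2}$ of \eqref{1-77}. Concretely, I would first treat the edge-localised summands $\|\varphi_j\omega u\|^2_{\mathcal{W}^{s,\gamma_2}(\R_+\times\R^q,\mathcal{K}^{s,\gamma_1}(X^\wedge))}$ and the "interior" summand $\|(1-\omega)u\|^2_{\mathcal{H}^{s,\gamma_2}((2\B)^\wedge)}$ separately. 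For the interior term, $\mathcal{H}^{s,\gamma_2}((2\B)^\wedge)$ is, via $S_{\gamma_2-b/2}$, isometric to the cylindrical space $H^s(\R\times 2\B)$, and these interpolate in the standard way with respect to $s$; the weight $\gamma_2$ enters only through a multiplication by a fixed power of $t$ (cf. \eqref{1-69}), which is an isomorphism commuting appropriately, so the $\gamma_2$-variable interpolates linearly as well.

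**Key steps.** The main work is the edge term. I would proceed in three layers. (1) Fix the group action $\varkappa=\kappa_\delta$ on $H=\mathcal{K}^{s,\gamma_1}(X^\wedge)$. By Definition \ref{1-25}(i) together with the isomorphism \eqref{2-6}, the space $\mathcal{W}^{s,\gamma_2}(\R_+\times\R^q,\mathcal{K}^{s,\gamma_1}(X^\wedge))$ is, after applying $S_{\gamma_2-b/2}$, unitarily equivalent to $\mathcal{W}^s(\R\times\R^q,\mathcal{K}^{s,\gamma_1}(X^\wedge))$, and the latter is by definition the space with norm $\{\int\langle\zeta\rangle^{2s}\|\kappa_{\langle\zeta\rangle}^{-1}\hat u(\zeta)\|^2_{\mathcal{K}^{s,\gamma_1}(X^\wedge)}\,d\zeta\}^{1/2}$ where $\zeta=(w,\eta)$ runs over $\R^{1+q}$. (2) For the cone variable, $\mathcal{K}^{s,\gamma_1}(X^\wedge)$ interpolates: one has $[\mathcal{K}^{s_0,\gamma_{1,0}}(X^\wedge),\mathcal{K}^{s_1,\gamma_{1,1}}(X^\wedge)]_\theta=\mathcal{K}^{s,\gamma_1}(X^\wedge)$, which follows from the analogous fact for $\mathcal{H}^{s,\gamma_1}$ (via $S_{\gamma_1-n/2}$ and interpolation of $H^s(\R\times X)$) and for $H^s_{\textup{cone}}(X^\wedge)$ (standard Sobolev interpolation), glued by the cut-off in the definition of $\mathcal{K}$. (3) Finally, I would invoke the general principle that the abstract wedge space $\mathcal{W}^s(\R^q,H)$ with group action $\varkappa$ interpolates jointly in $s$ and in the Hilbert-space argument $H$: if $[H_0,H_1]_\theta=H$ compatibly with $\varkappa$, then $[\mathcal{W}^{s_0}(\R^q,H_0),\mathcal{W}^{s_1}(\R^q,H_1)]_\theta=\mathcal{W}^s(\R^q,H)$. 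This is because $\mathcal{W}^s(\R^q,H)$ is realised as a weighted $L^2$ space $L^2(\R^q_\eta,\langle\eta\rangle^{2s}\,d\eta; H)$ after conjugating by the measurable family $\eta\mapsto\kappa_{\langle\eta\rangle}$, and weighted vector-valued $L^2$ spaces interpolate by the retraction/coretraction method (the conjugation $u\mapsto(\langle\eta\rangle^{s}\kappa_{\langle\eta\rangle}^{-1}\hat u(\eta))$ is simultaneously an isomorphism onto $L^2(\R^q,H)$ for each pair, so Calderón's theorem applies). Combining (1)–(3) gives the interpolation identity for each edge summand with the prescribed linear interpolation of all three parameters $s,\gamma_1,\gamma_2$.

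**Assembling.** Since $\mathcal{H}^{s,(\gamma_1,\gamma_2)}(B^\wedge)$ is defined by a finite sum of squared norms — one interior piece and finitely many edge pieces glued by a partition of unity $\{\varphi_j\}$ and a cut-off $\omega$ — it embeds as a complemented subspace of a finite direct sum of the building-block spaces via the coretraction $u\mapsto((1-\omega)u,\varphi_1\omega u,\dots,\varphi_N\omega u)$ and the retraction given by summation (using $\omega\prec\omega'$ and a subordinate partition of unity to undo the localisation). Complex interpolation commutes with retractions, so the identity for the direct sum — which is the coordinatewise interpolation just established — descends to $\mathcal{H}^{s,(\gamma_1,\gamma_2)}(B^\wedge)$, yielding exactly the claimed formula with $s=(1-\theta)s_0+\theta s_1$, $\gamma_1=(1-\theta)\gamma_{1,0}+\theta\gamma_{1,1}$, $\gamma_2=(1-\theta)\gamma_{2,0}+\theta\gamma_{2,1}$.

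**Main obstacle.** The delicate point is step (3): ensuring that the interpolation of $\mathcal{W}^s(\R^q,H_0)$ with $\mathcal{W}^s(\R^q,H_1)$ really reproduces $\mathcal{W}^s(\R^q,H)$ with $H=[H_0,H_1]_\theta$, including the interaction of the $\eta$-dependent group action $\kappa_{\langle\eta\rangle}$ (which has only polynomial, not uniform, bounds by \eqref{normk}) with the complex interpolation functor. One must check that the coretraction and retraction intertwining $\mathcal{W}^s(\R^q,H_i)$ with the plain weighted $L^2$-space are bounded uniformly in a way compatible with both endpoints — which is where the estimate \eqref{normk} and the strong continuity of $\varkappa$ are used — and that the $s$-shift in $\langle\eta\rangle^{2s}$ composes correctly with the $\theta$-interpolation of $H_0,H_1$. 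Everything else (the two flattening isomorphisms, the retraction onto $\mathcal{H}^{s,(\gamma_1,\gamma_2)}$, and the multiplicativity of $\gamma_2$ and $\gamma_1$) is routine once this abstract vector-valued interpolation lemma is in place.
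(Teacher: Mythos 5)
The paper offers no proof of its own here beyond the one-line citation that the arguments are analogous to \cite[Sections 2.1.2, 4.2.1]{Kapa10} and \cite{Hirs2}; your overall architecture --- splitting the norm into the interior piece and the edge-localised pieces, passing to a finite direct sum by a coretraction/retraction pair, interpolating the fibres $\mathcal{K}^{s,\gamma_1}(X^\wedge)$, and invoking a vector-valued weighted $L^2$ interpolation for the $\mathcal{W}$-spaces --- is indeed the standard skeleton behind that citation. For fixed weights ($\gamma_{1,0}=\gamma_{1,1}$, $\gamma_{2,0}=\gamma_{2,1}$) your argument is essentially complete, since then the conjugation $u\mapsto \varkappa^{-1}_{\langle w,\eta\rangle}(M_{t\to w}\mathcal{F}_{y\to\eta}u)$ on the fixed line $\Gamma_{(b+1)/2-\gamma_2}$ is one and the same isometry for both endpoints and Calder\'on's theorem for $L^2$ with the weights $\langle w,\eta\rangle^{2s_i}$ applies.

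The genuine gap is at the point that constitutes the actual content of the theorem: the simultaneous variation of the smoothness and of the weights $\gamma_1,\gamma_2$ across the couple. Every conjugation you use to reach a weighted $L^2$ space depends on the weight: the flattening $S_{\gamma_2-b/2}$ of \eqref{2-6} (and $S_{\gamma_1-n/2}$ of \eqref{1-77}) is a different map for the two endpoints, equivalently the Mellin line $\Gamma_{(b+1)/2-\gamma_2}$ moves with $\gamma_2$. If you normalise with the flattening belonging to one endpoint, the other endpoint acquires an exponential weight in $\boldsymbol{t}$, and then the smoothness weight lives on the transform side while the weight discrepancy lives on the physical side; there is no single identification with a weighted $L^2$ space over one fixed measure space, so the retraction/coretraction plus Stein--Weiss/Calder\'on step does not apply as stated. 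The appeal to \eqref{1-69} does not repair this: multiplication by $t^\beta$ (or $r^\beta$ in your step (2) for $[\mathcal{K}^{s_0,\gamma_{1,0}}(X^\wedge),\mathcal{K}^{s_1,\gamma_{1,1}}(X^\wedge)]_\theta$, and likewise for the interior couple $\mathcal{H}^{s_i,\gamma_{2,i}}((2\B)^\wedge)$) can normalise one endpoint of the couple but not both. What is needed --- and what the cited references actually supply --- is an argument using that for elements of the intersection the Mellin transform is holomorphic in the strip between the two weight lines (a Paley--Wiener/three-lines argument, equivalently a Stein-type analytic family of weight shifts combined with order reductions, where purely imaginary weight shifts have only polynomially growing operator norms). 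Note also that the obstacle you single out, the compatibility of $\kappa_{\langle\eta\rangle}$ with interpolation and the bound \eqref{normk}, is the harmless part: the conjugation by $\kappa_{\langle\eta\rangle}$ is built into the definition of the norms and is the same map at both endpoints. So the proof goes through only after the weight-variation step is supplied; as written it establishes the statement only for equal weights.
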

\begin{proof}
The arguments are analogous to those in \cite[Section 2.1.2, 4.2.1]{Kapa10}, see also \cite{Hirs2}.
\end{proof}
\begin{Prop}\label{2-17}
For every $s\in \N$ we have the following equivalence of norms
\begin{align}
&\| u\|_{\mathcal{W}^{s,\gamma_2}(\R_+\times \R^q,\mathcal{K}^{s,\gamma_1}(X^\wedge))} \sim\{
 \| u\|^2_{\mathcal{W}^{0,\gamma_2}(\R_+\times \R^q,\mathcal{K}^{0,\gamma_1}(X^\wedge))}\notag\\
&+\sum_{|\beta|=s}\|(t\partial_t)^{\beta^\prime} D_y^{\beta^{\prime\prime}}u\|^2_{\mathcal{W}^{0,\gamma_2}(\R_+\times \R^q,
\mathcal{K}^{0,\gamma_1}(X^\wedge))}\notag\\&+\sum_{|\alpha|=s, 1\leq l\leq N}\| D_l^\alpha u\|^2_{\mathcal{W}^{0,\gamma_2}(\R_+\times \R^q,
\mathcal{K}^{0,\gamma_1;-s}(X^\wedge))}\notag\\&+\sum_{|\alpha|=s, 1\leq l\leq N,|\beta|=s}
\|(t\partial_t)^{\beta^\prime} D_y^{\beta^{\prime\prime}}D_l^\alpha u\|^2_{\mathcal{W}^{0,\gamma_2}(\R_+\times \R^q,
\mathcal{K}^{0,\gamma_1;-s}(X^\wedge))}\}^{1/2}
\end{align} for the operators $D_l^{\alpha}$ from Proposition \textup{\ref{1-79}}. In the sum over $\alpha$ we may
equivalently take $0<|\alpha|\leq s$, and $\|\cdot\|_{\mathcal{W}^{0,\gamma_2}(\R_+\times \R^q,
\mathcal{K}^{0,\gamma_1;-|\alpha|}(X^\wedge))}$ and $0< \beta\leq s$ in the sum over $\beta$.
\end{Prop}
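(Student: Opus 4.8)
The plan is to reduce everything to the smooth wedge situation already settled in Proposition \ref{1-84}, by trading the Mellin transform in $t$ for a Fourier transform and treating $t$ as one additional edge covariable, and then carrying the resulting operator identities back.

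\textbf{Step 1: pass from Mellin to Fourier.} By \eqref{2-6} the transformation $S_{\gamma_2-b/2}$ induces, for any Hilbert space $H$ with group action, an isomorphism $\mathcal{W}^{s,\gamma_2}(\R_+\times\R^q,H)\to\mathcal{W}^s(\R\times\R^q,H)$, and likewise for $s=0$. Crucially $S_{\gamma_2-b/2}$ acts only in the variable $t$ and does nothing to $(r,x)\in X^\wedge$. Applying \eqref{2-6} with $H=\mathcal{K}^{s,\gamma_1}(X^\wedge)$ on the left of the claimed equivalence, and with $H=\mathcal{K}^{0,\gamma_1}(X^\wedge)$ resp.\ $H=\mathcal{K}^{0,\gamma_1;-s}(X^\wedge)$ on the right, and putting $v:=S_{\gamma_2-b/2}u$, it suffices to prove the corresponding norm equivalence for $\|v\|_{\mathcal{W}^s(\R^{1+q},\mathcal{K}^{s,\gamma_1}(X^\wedge))}$ in the variables $(\boldsymbol{t},y)\in\R^{1+q}$, with $t\partial_t$ replaced by $S_{\gamma_2-b/2}(t\partial_t)S_{\gamma_2-b/2}^{-1}$ and with $D_y$ and the operators $D_l^\alpha$ of Proposition \ref{1-79} unchanged.

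\textbf{Step 2: the wedge estimate in $1+q$ covariables.} Neither the statement nor the proof of Proposition \ref{1-84} uses that the number of edge covariables equals $q$; both are valid with $\R^{1+q}$ in place of $\R^q$. Splitting a multi-index over $\R^{1+q}$ as $\beta=(\beta',\beta'')$ with $\beta'\in\N$ attached to $\boldsymbol{t}$, Proposition \ref{1-84} then gives that $\|v\|_{\mathcal{W}^s(\R^{1+q},\mathcal{K}^{s,\gamma_1}(X^\wedge))}$ is equivalent to the square root of $\|v\|^2_{\mathcal{W}^0(\R^{1+q},\mathcal{K}^{0,\gamma_1}(X^\wedge))}$ together with the three sums over $D_{\boldsymbol{t}}^{\beta'}D_y^{\beta''}v$ (measured in $\mathcal{W}^0(\R^{1+q},\mathcal{K}^{0,\gamma_1}(X^\wedge))$), over $D_l^\alpha v$, and over $D_l^\alpha D_{\boldsymbol{t}}^{\beta'}D_y^{\beta''}v$ (the last two measured in $\mathcal{W}^0(\R^{1+q},\mathcal{K}^{0,\gamma_1;-s}(X^\wedge))$), precisely as in the displayed formula of Proposition \ref{1-84}; relation \eqref{1-82} is used, exactly as there, to commute each $D_l^\alpha$ past the dilation $\delta_{\langle\cdot\rangle}^{-1}$.

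\textbf{Step 3: transport back.} Since $S_{\gamma_2-b/2}$ touches only $t$, it commutes with $D_y$ and with each $D_l^\alpha$, and by \eqref{2-6} with $s=0$ it is an isomorphism $\mathcal{W}^0(\R^{1+q},H)\to\mathcal{W}^{0,\gamma_2}(\R_+\times\R^q,H)$ for $H=\mathcal{K}^{0,\gamma_1}(X^\wedge)$ and for $H=\mathcal{K}^{0,\gamma_1;-s}(X^\wedge)$. Writing $t=e^{-\boldsymbol{t}}$ one computes $S_{\gamma_2-b/2}^{-1}\partial_{\boldsymbol{t}}S_{\gamma_2-b/2}=\frac{b+1}{2}-\gamma_2-t\partial_t$, so $S_{\gamma_2-b/2}^{-1}D_{\boldsymbol{t}}^{\beta'}S_{\gamma_2-b/2}$ is a constant-coefficient polynomial of degree $|\beta'|$ in the Euler operator $t\partial_t$ with leading term $\pm(t\partial_t)^{\beta'}$. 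Hence each transported norm $\|S_{\gamma_2-b/2}^{-1}D_{\boldsymbol{t}}^{\beta'}D_y^{\beta''}(\,\cdot\,)u\|$ is bounded above and below, modulo the other terms on the right-hand side, by finite sums of norms of $(t\partial_t)^k D_y^{\beta''}(\,\cdot\,)u$ with $k+|\beta''|\leq s$. By the last sentence of Proposition \ref{1-84} one may there replace the condition $|\beta|=s$ by $0<\beta\leq s$, and $|\alpha|=s$ by $0<|\alpha|\leq s$ together with the weight $-|\alpha|$, and it is exactly this flexibility that absorbs all the lower-order monomials in $t\partial_t$ produced by the conjugation. Assembling Steps 1--3 yields the asserted equivalence of norms, as well as the concluding remark on the ranges of $\alpha$ and $\beta$.

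\textbf{Where the work sits.} There is no genuinely new difficulty here; the point requiring attention is the bookkeeping in Step 3 — the conjugation produces $t\partial_t$ shifted by a constant rather than $t\partial_t$ itself, so subleading powers of the Euler operator appear and must be discarded via the $0<\beta\leq s$ form of Proposition \ref{1-84} — together with the routine check that $S_{\gamma_2-b/2}$, being a transformation in the single variable $t$, intertwines the relevant $\mathcal{W}$- and $\mathcal{W}^{\gamma_2}$-spaces also when the fibre is $\mathcal{K}^{0,\gamma_1;-s}(X^\wedge)$.
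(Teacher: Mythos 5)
Your proposal is correct and follows essentially the same route as the paper: reduce via the isomorphism \eqref{2-6} to the Fourier-based space $\mathcal{W}^s(\R^{1+q},\mathcal{K}^{s,\gamma_1}(X^\wedge))$, invoke Proposition \ref{1-84} with $t$ as an extra edge variable, and transport back using the conjugation identity $\partial_{\boldsymbol{t}}\circ S_{\gamma_2-b/2}=S_{\gamma_2-b/2}\circ(\tfrac{b+1}{2}-\gamma_2-t\partial_t)$, which is exactly the paper's use of \eqref{2-7} with the shifted operator $(\partial_{\tilde t}+d)$. Your explicit bookkeeping of the lower-order Euler-operator terms via the $0<\beta\leq s$ form of Proposition \ref{1-84} is just a slightly more detailed rendering of the same step.
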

\begin{proof}
By virtue of \eqref{2-6} we mainly have to rephrase $\| S_{\gamma_2-n_1/2} u\|_{\mathcal{W}^s(\R\times \R^q,H)}$
for $H=\mathcal{K}^{s,\gamma_1}(X^\wedge)$. From Proposition \ref{1-84} we have the equivalence of norms
\begin{align}\label{2-7}
&\| v\|_{\mathcal{W}^s(\R^{1+q},\mathcal{K}^{s,\gamma_1}(X^\wedge))}
\sim \big\{\| v\|^2_{\mathcal{W}^0(\R^{1+q},\mathcal{K}^{0,\gamma_1}(X^\wedge))}
\notag\\&+\sum_{|\beta|=s}\|(\partial_{\tilde{t}} +d)^{\beta^\prime}D_y^{\beta^{\prime\prime}} v\|^2_{\mathcal{W}^0(\R^{1+q},\mathcal{K}^{0,\gamma_1}(X^\wedge))}
\notag\\&+\sum_{|\alpha|=s,1\leq l\leq N}\|D_l^\alpha v\|^2_{\mathcal{W}^0(\R^{1+q},\mathcal{K}^{0,\gamma_1;-s}(X^\wedge))}\notag\\&+\sum_{|\alpha|=s,1\leq l\leq N,|\beta|=s}\|(\partial_{\tilde{t}} +d)^{\beta^\prime}D_y^{\beta^{\prime\prime}}D_l^\alpha v\|_{\mathcal{W}^0(\R^{1+q},\mathcal{K}^{0,\gamma_1;-s}(X^\wedge))}\big\}^{1/2}
\end{align} for the constant $d=(n_1+1)/2-\gamma_2$. Inserting $v:=S_{\gamma_2-n_1/2}u$ and applying $\partial_t \circ S_\gamma=S_\gamma\circ (-r\partial_r-(1/2-\gamma))$
gives us the result.
\end{proof}
\begin{Cor}
For every $s\in \N$, $\gamma_1,\gamma_2\in \R$ the relation $u(t,\cdot)\in \mathcal{H}^{s,(\gamma_1,\gamma_2)}(B^\wedge)$
is equivalent to
$$(t\partial_t)^kD_{\boldsymbol{x}}^\alpha(1-\omega)u(t,\boldsymbol{x})\in t^{-b/2+\gamma_2}L^2(\R_+\times 2\B)$$
$k+|\alpha|\leq s$, for all $D^\alpha_{\boldsymbol{x}}\in \textup{Diff}^\alpha(2\B)$ with $\boldsymbol{x}$ being the
variable on the smooth manifold $2\B$, cf. Proposition \textup{\ref{1-91}}, together with the conditions
\[\varphi_j\omega u\in \mathcal{W}^{0,\gamma_2}(\R_+\times \R^q,\mathcal{K}^{0,\gamma_1}(X^\wedge)),\]
\[(t\partial_t)^{\beta^\prime}D_y^{\beta^{\prime\prime}}(\varphi_j\omega u)\in \mathcal{W}^{0,\gamma_2}(\R_+\times \R^q,\mathcal{K}^{0,\gamma_1}(X^\wedge))\]
for all $\beta=(\beta^\prime,\beta^{\prime\prime})$, $|\beta|=s$,
\[D_l^\alpha(\varphi_j\omega u)\in \mathcal{W}^{0,\gamma_2}(\R_+\times\R^q,\mathcal{K}^{0,\gamma_1;-s}(X^\wedge))\]
for all $|\alpha|=s$, $1\leq l\leq N$, and for all $j$, where $\omega=\omega(r)$, and 
\[(t\partial_t)^{\beta^\prime}D_y^{\beta''}D_l^\alpha(\varphi_j\omega u)\in \mathcal{W}^{0,\gamma_2}(\R_+\times\R^q,
\mathcal{K}^{0,\gamma_1;-s}(X^\wedge))\] for all $|\alpha|=s$, $1\leq l\leq N$, and for all $j$, $|\beta|=s$.
\end{Cor}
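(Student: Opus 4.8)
The plan is to reduce the assertion, through the defining norm \eqref{2-21} of Definition \ref{2-8}, to its two constituent summands $(1-\omega)u$ and $\varphi_j\omega u$, $j=1,\dots,N$, and to characterise each of them separately by differentiations, invoking the already established Propositions \ref{1-91} and \ref{2-17}. Since \eqref{2-21} is a finite sum of squared norms, $u\in\mathcal{H}^{s,(\gamma_1,\gamma_2)}(B^\wedge)$ holds if and only if both $(1-\omega)u\in\mathcal{H}^{s,\gamma_2}((2\B)^\wedge)$ and $\varphi_j\omega u\in\mathcal{W}^{s,\gamma_2}(\R_+\times\R^q,\mathcal{K}^{s,\gamma_1}(X^\wedge))$ for every $j$, so it suffices to rephrase these two membership conditions.

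For the interior piece I would note that $1-\omega$ vanishes near $\partial\B$, so $(1-\omega)u$ may be regarded as a function on the double $2\B$, a closed smooth manifold of dimension $b=n+1+q$. Applying Proposition \ref{1-91} with $2\B$ in place of $X$, the corner-axis variable $t$ in place of $r$, $b$ in place of $n$, and $\gamma_2$ in place of $\gamma$ (together with $\mathcal{H}^{0,0}((2\B)^\wedge)=t^{-b/2}L^2(\R_+\times 2\B)$ from \eqref{0-6}) gives that $(1-\omega)u\in\mathcal{H}^{s,\gamma_2}((2\B)^\wedge)$ is equivalent to $(t\partial_t)^kD_{\boldsymbol{x}}^\alpha(1-\omega)u\in t^{-b/2+\gamma_2}L^2(\R_+\times 2\B)$ for all $D_{\boldsymbol{x}}^\alpha\in\textup{Diff}^\alpha(2\B)$ with $k+|\alpha|\le s$. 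This is exactly the first block of conditions.

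For the edge pieces I would apply Proposition \ref{2-17} to $v:=\varphi_j\omega u$ (with $\varphi_j\in C_0^\infty(\R^q)$ and $\omega=\omega(r)$). The norm equivalence there splits $\|v\|_{\mathcal{W}^{s,\gamma_2}(\R_+\times\R^q,\mathcal{K}^{s,\gamma_1}(X^\wedge))}$ into four blocks: $v$ itself in $\mathcal{W}^{0,\gamma_2}(\R_+\times\R^q,\mathcal{K}^{0,\gamma_1}(X^\wedge))$; the top-order derivatives $(t\partial_t)^{\beta'}D_y^{\beta''}v$, $|\beta|=s$, in the same space; the cone derivatives $D_l^\alpha v$, $|\alpha|=s$, $1\le l\le N$, in $\mathcal{W}^{0,\gamma_2}(\R_+\times\R^q,\mathcal{K}^{0,\gamma_1;-s}(X^\wedge))$; and the mixed terms $(t\partial_t)^{\beta'}D_y^{\beta''}D_l^\alpha v$ in the latter space. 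Reading this equivalence as a characterisation of membership — the completion construction behind Definitions \ref{1-25} and \ref{2-8} makes finiteness of the right-hand norm equivalent to $v$ lying in the space — reproduces the remaining conditions, and recombining the two blocks via Definition \ref{2-8} yields the claim. The equivalent variants with $0<|\alpha|\le s$ (and the norm of $\mathcal{K}^{0,\gamma_1;-|\alpha|}(X^\wedge)$) and with $0<\beta\le s$ follow from the corresponding final remarks in Propositions \ref{1-79} and \ref{2-17}.

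The point requiring attention — the step I would be most careful about — is the interaction of the cut-off and partition functions with the operators $D_l^\alpha$: these act only in $(r,x)$, so $D_l^\alpha(\varphi_j\omega u)$ differs from $\varphi_j\,D_l^\alpha(\omega u)$ by terms in which $r$-differentiations of $D_l^\alpha$ fall on $\omega(r)$, all of the same type but of strictly lower order in $\alpha$, hence already absorbed into the lower-order members of the characterisation (which is precisely why one may equivalently restrict to $|\alpha|=s$ or allow $0<|\alpha|\le s$); the same observation shows that $(t\partial_t)$ and $D_y$ commute with each $D_l^\alpha$, so the mixed terms are unambiguously defined. Beyond this bookkeeping there is no further obstacle: the proof is the combination of Definition \ref{2-8} with Propositions \ref{1-91} and \ref{2-17}, in the same iterative spirit in which Proposition \ref{1-84} was obtained from Proposition \ref{1-79}.
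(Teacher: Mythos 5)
Your proposal is correct and follows essentially the route the paper intends: the corollary is read off directly from Definition \ref{2-8} by characterising the summand $(1-\omega)u$ via Proposition \ref{1-91} (applied with $2\B$, $t$, $b$, $\gamma_2$ in place of $X$, $r$, $n$, $\gamma$) and the summands $\varphi_j\omega u$ via Proposition \ref{2-17}. Your closing remark on the interaction of $D_l^\alpha$ with the cut-offs is sound but not actually needed, since the conditions in the corollary are formulated directly for the localised functions $\varphi_j\omega u$, exactly as Proposition \ref{2-17} characterises them.
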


Let us now consider the case of spaces over $B^\asymp=\R\times B\ni (t,\cdot)$ for a compact manifold
$B$ with edge, where $B^\asymp$ is regarded as a space with conical exits $t\to \pm\infty$. Later on we are
only interested in the positive side. We intend to compare the spaces $\mathcal{H}^{s,(\gamma_1,\gamma_2)}(B^\wedge)$
with spaces of the kind $H^{s,\gamma_1}_{\textup{cone}}(B^\wedge):=H^{s,\gamma_1}_{\textup{cone}}(B^\asymp)|_{\R_+\times B}$,
first for $s\in \N$. To this end we first give the definition of $H^{s,\gamma_1}_{\textup{cone}}(B^\asymp)$. Close to
the edge of $B$ we may model the space on $H^s_{\textup{cone}}((2\B)^\asymp)$ according to the corresponding construction
for a smooth compact cross section. Far from the edge
of $B$ the global definition of $H^{s,\gamma_1}_{\textup{cone}}(B^\asymp)$ relies on a corresponding notion of spaces
$\mathcal{W}^{s,\gamma_1}_{\textup{cone}}(\R\times\R^q\times X^\wedge)$ in the variables $(t,r,x,y)$, where $t$ replaces
the former $r$ while $(y,r,x)$ are variables on the singular cross section $B$. Analogously as \eqref{1-85} we form
$$(\delta_{[t]}f)(t,r,x,y):=f(t,[t]r,x,[t]y)$$ and define
\begin{equation}\label{1-87}\mathcal{W}^{s,\gamma_1}_{\textup{cone}}(\R\times\R^q\times X^\wedge)=\delta_{[t]}\mathcal{W}^{s,\gamma_1}(\R\times\R^q\times X^\wedge)
=\delta_{[t]}\mathcal{W}^{s}(\R\times\R^q,\mathcal{K}^{s,\gamma_1}(X^\wedge)),
\end{equation}
cf. also \eqref{1-86}, in other words
\[\mathcal{W}^{s,\gamma_1}_{\textup{cone}}(\R\times\R^q\times X^\wedge)=\{u(t,r,x,y):u(t,[t]^{-1}r,x,[t]^{-1}y)\in
\mathcal{W}^s(\R^{1+q}_{t,y},\mathcal{K}^{s,\gamma_1}(X_{r,x}^\wedge))\},\] $s\in \R$. In the intersection
zone of the localisation close and off the edge both definitions are equivalent.
In other words, we have an isomorphism
\[\delta_{[t]}^{-1}:\mathcal{W}^{s,\gamma_1}_{\textup{cone}}(\R\times\R^q\times X^\wedge)\to \mathcal{W}^s(\R^{1+q},\mathcal{K}^{s,\gamma_1}(X^\wedge))\]
where $ \|u\|_{\mathcal{W}^{s,\gamma_1}_{\textup{cone}}(\R\times\R^q\times X^\wedge)}=\|\delta_{[t]}^{-1} u\|_{ \mathcal{W}^s(\R^{1+q},\mathcal{K}^{s,\gamma_1}(X^\wedge))}.$
Proposition \ref{1-84}
gave us a characterisation of $\mathcal{W}^s(\R^{1+q}_{t,\tilde{y}},\mathcal{K}^{s,\gamma_1}(X_{\tilde{r},x}^\wedge))$ for $s\in \N$, namely, that
$u(t,\tilde{r},x,\tilde{y})\in \mathcal{W}^s(\R^{1+q}_{t,\tilde{y}},\mathcal{K}^{s,\gamma_1}(X_{\tilde{r},x}^\wedge))$ is equivalent to
$u\in \mathcal{W}^0(\R^{1+q},\mathcal{K}^{0,\gamma_1}(X^\wedge))$ together with
$$D^\beta_{t,\tilde{y}} u\in \mathcal{W}^0(\R^{1+q},\mathcal{K}^{0,\gamma_1}(X^\wedge)),\,\,|\beta|=s,$$ and
$$D^\alpha_{j,(\tilde{r},x)} u\in \mathcal{W}^0(\R^{1+q},\mathcal{K}^{0,\gamma_1;-s}(X^\wedge)),\,\,0\leq j\leq N, |\alpha|=s.$$
\begin{Prop}\label{2-18}
For $s\in \N$, we have
\begin{align}
&\|u\|_{\mathcal{W}^{s,\gamma_1}_{\textup{cone}}(\R\times \R^q\times X^\wedge)}
\sim\big\{\|u(t,r[t]^{-1},x,y[t]^{-1})\|^2_{\mathcal{W}^0(\R\times \R^q,\mathcal{K}^{0,\gamma_1}(X^\wedge))}\notag
\\ &+\sum_{|\beta|=s}\|D_t^{\beta^\prime}D_y^{\beta^{\prime\prime}}
u(t,r[t]^{-1},x,y[t]^{-1})\|^2_{\mathcal{W}^0(\R\times \R^q,\mathcal{K}^{0,\gamma_1}(X^\wedge))}\notag
\\ &+\sum_{|\alpha|=s,1\leq j\leq N}\|D_j^\alpha
 u(t,r[t]^{-1},x,y[t]^{-1})\|^2_{\mathcal{W}^0(\R\times \R^q,\mathcal{K}^{0,\gamma_1;-s}(X^\wedge))}\notag
 \\&+\sum_{|\alpha|=s,1\leq j\leq N,|\beta|=s}\|D_t^{\beta^\prime}D_y^{\beta^{\prime\prime}}D_j^\alpha
 u(t,r[t]^{-1},x,y[t]^{-1})\|^2_{\mathcal{W}^0(\R\times \R^q,\mathcal{K}^{0,\gamma_1;-s}(X^\wedge))}\big\}^{1/2}
\end{align}
\end{Prop}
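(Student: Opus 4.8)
The plan is to deduce the stated equivalence directly from Proposition \ref{1-84} by transport through the isometric isomorphism $\delta_{[t]}^{-1}\colon \mathcal{W}^{s,\gamma_1}_{\textup{cone}}(\R\times\R^q\times X^\wedge)\to \mathcal{W}^s(\R^{1+q},\mathcal{K}^{s,\gamma_1}(X^\wedge))$ recorded in and after \eqref{1-87}. First I would set $v:=\delta_{[t]}^{-1}u$, so that $v(t,r,x,y)=u(t,r[t]^{-1},x,y[t]^{-1})$ and, by the very construction of the cone space, $\|u\|_{\mathcal{W}^{s,\gamma_1}_{\textup{cone}}(\R\times\R^q\times X^\wedge)}=\|v\|_{\mathcal{W}^s(\R^{1+q},\mathcal{K}^{s,\gamma_1}(X^\wedge))}$. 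This reduces the assertion to a norm equivalence for $v$ in $\mathcal{W}^s(\R^{1+q},\mathcal{K}^{s,\gamma_1}(X^\wedge))$.

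Next I would apply Proposition \ref{1-84}, with its edge variable $\R^q$ replaced by $\R^{1+q}_{t,y}$ (the version recalled in the paragraph preceding the present statement), to the element $v$. Splitting a multi-index $\beta\in\N^{1+q}$ as $\beta=(\beta',\beta'')$ with $D^\beta_{(t,y)}=D_t^{\beta'}D_y^{\beta''}$, this gives at once an equivalence of $\|v\|_{\mathcal{W}^s(\R^{1+q},\mathcal{K}^{s,\gamma_1}(X^\wedge))}$ with the square root of the sum of $\|v\|^2_{\mathcal{W}^0(\R^{1+q},\mathcal{K}^{0,\gamma_1}(X^\wedge))}$, the terms $\|D_t^{\beta'}D_y^{\beta''}v\|^2_{\mathcal{W}^0(\R^{1+q},\mathcal{K}^{0,\gamma_1}(X^\wedge))}$ over $|\beta|=s$, the terms $\|D_j^\alpha v\|^2_{\mathcal{W}^0(\R^{1+q},\mathcal{K}^{0,\gamma_1;-s}(X^\wedge))}$ over $|\alpha|=s$, $1\leq j\leq N$, and the mixed terms $\|D_t^{\beta'}D_y^{\beta''}D_j^\alpha v\|^2_{\mathcal{W}^0(\R^{1+q},\mathcal{K}^{0,\gamma_1;-s}(X^\wedge))}$. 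Substituting the explicit form $v(t,r,x,y)=u(t,r[t]^{-1},x,y[t]^{-1})$ then reproduces verbatim the right-hand side in the statement.

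The only step needing a little attention — and it is not a genuine obstacle — is the bookkeeping of which variables each derivative acts in. The operators $D_j^\alpha$ of Proposition \ref{1-79} involve only the cone variables $(r,x)$ and have coefficients independent of $(t,y)$, so they commute with $D_t$ and $D_y$ as operators on functions of $(t,r,x,y)$; hence the ordering of factors in the mixed term $D_t^{\beta'}D_y^{\beta''}D_j^\alpha v$ is immaterial and agrees with the term $D_j^\alpha D^\beta_{(t,y)}v$ furnished by Proposition \ref{1-84}. Moreover, by the scaling invariance \eqref{1-82} the $D_j^\alpha$ commute with the dilation $\delta_{[t]}$ (the rescaling of $y$ being irrelevant since $D_j^\alpha$ does not act on $y$), so one may equally write $D_j^\alpha v=(D_j^\alpha u)(t,r[t]^{-1},x,y[t]^{-1})$; for the factor $D_t^{\beta'}$, which does not commute with $\delta_{[t]}$, the corresponding terms must be read as derivatives of the composed function $v$ itself. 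Beyond this, the proof is a transcription of Proposition \ref{1-84} through the isometry $\delta_{[t]}^{-1}$, so no new analytic input is required.
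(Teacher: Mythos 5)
Your proof is correct and is essentially identical to the paper's: the paper likewise reduces the claim via the isometry $\delta_{[t]}^{-1}$ from \eqref{1-87} and then applies Proposition \ref{1-84} to $(\delta_{[t]}^{-1}u)(t,r,x,y)=u(t,r[t]^{-1},x,y[t]^{-1})$. The additional bookkeeping remarks about commutation of the $D_j^\alpha$ with $D_t,D_y$ are accurate and only make explicit what the paper leaves implicit.
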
In the sum over $\alpha$ we may
equivalently take $0<|\alpha|\leq s$, and
$\|\cdot\|_{\mathcal{W}^0(\R\times \R^q,\mathcal{K}^{0,\gamma_1;-|\alpha|}(X^\wedge))}$ and $0< \beta\leq s$ in the sum over $\beta$.
\begin{proof}
In fact, because of \eqref{1-87}, it suffices to apply
Proposition \ref{1-84} to $(\delta_{[t]}^{-1} u)(t,r,x,y)=u(t,r[t]^{-1},x,y[t]^{-1})$.
\end{proof}
\begin{Def}\label{1-sp}
\begin{enumerate}
\item[\textup{(i)}]By $H^{s,\gamma _1}_{\textup{cone}}(B^\asymp )$ for $s,\gamma _1\in \R$ we denote the completion of $C_0^\infty (\R\times (B\setminus Y))$ with respect to the norm
\begin{equation}\label{1-speqe}
\| u\| _{H^{s,\gamma _1}_{\textup{cone}}(B^\asymp )}=\big\{\|(1-\omega ) u\| ^2_{H^s_{\textup{cone}}((2\B)^\asymp )}+\sum_{j=1}^N \|\omega  \varphi _j u\| ^2_{\mathcal{W}^{s,\gamma _1}_{\textup{cone}}(\R\times \R^q\times X^\wedge)}\big\}^{1/2}
\end{equation} for any cut-off function $\omega=\omega(r)$.
Moreover, we set
\[H^{s,\gamma_1;e}_{\textup{cone}}(B^\asymp):=\langle t\rangle^{-e}H^{s,\gamma_1}_{\textup{cone}}(B^\asymp),\qquad H^{s,\gamma_1;e}_{\textup{cone}}(B^\wedge)=H^{s,\gamma_1;e}_{\textup{cone}}(B^\asymp)|_{B^\wedge},\]
for any $e\in \R$, then $H^{s,\gamma_1}_{\textup{cone}}(\cdot)=H^{s,\gamma_1;0}_{\textup{cone}}(\cdot)$.
\item[\textup{(ii)}]
The space $\mathcal{K}^{s,\gamma}(B^\wedge)$ for $s\in \R$, $\gamma=(\gamma_1,\gamma_2)\in \R^2$,
 is defined as $$\mathcal{K}^{s,\gamma}(B^\wedge):=\{\omega u_0+(1-\omega)u_\infty: u_0\in \mathcal{H}_{\textup{cone}}^{s,\gamma}(B^\wedge),
 u_\infty\in {H}^{s,\gamma_1}(B^\wedge)\}$$ for any cut-off function $\omega(t)$.
 Moreover, we set $\mathcal{K}^{s,\gamma;e}(B^\wedge):=\langle t\rangle^{-e}\mathcal{K}^{s,\gamma}(B^\wedge)$ for any $e \in \R$.
\end{enumerate}
\end{Def}
\begin{Rem}\label{2-40}
\begin{enumerate}
\item[\textup{(i)}] Observe that the space $\mathcal{K}^{\infty,\gamma;\infty}(B^\wedge)=\bigcap_{s,e\in \R}\mathcal{K}^{s,\gamma;e}(B^\wedge)$
does not depend on the choice of the involved group action $\varkappa$. Also the spaces $\mathcal{H}^{\infty,\gamma}(B^\wedge)$
and $H^{\infty,\gamma}_{\textup{cone}}(B^\asymp)$ \textup{(}the corresponding intersections over $s\in \R$\textup{)} are independent
of $\varkappa$.
\item[\textup{(ii)}] The operators of multiplication by $\omega(r)$ and $1-\omega(r)$ for some cut-off function $\omega$ \textup{(}that are also
globally defined on $\B$\textup{)} induce continuous operators in the spaces $\mathcal{K}^{\infty,\gamma;\infty}(B^\wedge).$ 
\end{enumerate}
\end{Rem} For (i), it suffices to employ Lemma \ref{1-92}, applied to the $\mathcal{W}$-spaces involved in Definition \ref{2-8} and \ref{1-sp}.
For (ii) we may omit $e$ and use $\omega\mathcal{K}^{\infty,\gamma}(B^\wedge)=\bigcap_{s\in \N}\omega\mathcal{K}^{s,\gamma}(B^\wedge)=
\bigcap_{s\in \N}\omega\mathcal{K}_1^{s,\gamma}(B^\wedge)$ where $\omega\mathcal{K}_1^{s,\gamma}(B^\wedge)$ is defined in an analogous manner 
as $\bigcap_{s\in \N}\omega\mathcal{K}^{s,\gamma}(B^\wedge)$ where only the $\mathcal{W}$-spaces involved in Proposition \ref{2-17} and \ref{2-18}
are to be replaced by respective $\mathcal{W}_1$-spaces with $1$ indicating the trivial group action in the $\mathcal{K}$-spaces over $X^\wedge$.
Then the $1$-analogues of Propositions \ref{2-17} and \ref{2-18} give us characterisations of $\omega\mathcal{K}_1^{s,\gamma}(B^\wedge)$ for 
$s\in \N$ in $\mathcal{W}_1^{0,\gamma_2}(\R_+\times \R^q,...)$ and $\mathcal{W}_1^{0}(\R\times \R^q,...)$ norms, and those can easily be
evaluated in weighted $L^2$ norms in the variables $(t,r,y),$ cf. also the proof Theorem \ref{1-18} below. Finally for 
the multiplication by $1-\omega$ the arguments are simpler since we are then off the edge $\R_+\times Y$ of $B^\wedge$.
\begin{Rem}\label{1-speq}
By notation the $\mathcal{W}^{s,\gamma _1}_{\textup{cone}}(\R\times \R^q\times X^\wedge)$-norms in the second term on the right of \eqref{1-speqe} can equivalently be written as
$\|\delta ^{-1}_{[t]}(\omega  \varphi _j u)\| _{\mathcal{W}^s(\R\times \R^q,\mathcal{K}^{s,\gamma _1} (X^\wedge))} .$
\end{Rem}
Also for the cone spaces we need an $L^2$-characterisation of $\mathcal{H}^{0,0}_{\textup{cone}}(B^\asymp)$. From Definition \ref{1-sp} it follows
that $u\in \mathcal{H}^{0,0}_{\textup{cone}}(B^\asymp)$ is equivalent to $(1-\omega)u\in [t]^{-b/2}L^2(\R\times 2\B)$
for a cut-off function $\omega=\omega(r)$,
together with
$\omega\varphi_j u\in [t]^{-b/2}\mathcal{W}^{0,0}_{\textup{cone}}(\R\times \R^q\times X^\wedge)$
for any $j$, or \[\delta^{-1}_{[t]}(\omega\varphi_j u)\in \mathcal{W}^0(\R\times \R^q,\mathcal{K}^{0,0}(X^\wedge))
=r^{-n/2}[t]^{-b/2}L^2(\R\times \R^q\times \R_+\times X).\]
Here we employed again that $\varkappa$ is unitary on $\mathcal{K}^{0,0}(X^\wedge)$. In other words,
$u\in H^{0,0}_{\textup{cone}}(B^\asymp)$ means that
\[\|u\|_{H^{0,0}_{\textup{cone}}(B^\asymp)}=\big\{\|(1-\omega)u\|^2_{[t]^{-b/2}L^2(\R\times 2\B)}+\sum_{j=1}^N\|\delta^{-1}_{[t]}(\omega
\varphi_j u)\|^2_{r^{-n/2}L^2(\R\times \R_+\times \R^q\times X)}\big\}^{\frac{1}{2}}\] for $\omega=\omega(r)$; again equivalent norms are denoted by the same letter.
Note that over the intersection of the supports of $\omega$ and $1-\omega$ there is no contradiction with respect to the behaviour for $|t|\to \infty$.
The norm in $H^{0,0}_{\textup{cone}}(B^\asymp)$ is generated by a scalar product, analogously as explained in connection with \eqref{1-89}, namely,
\begin{multline}\label{2-33}
(u,v)_{H^{0,0}_{\textup{cone}}(B^\asymp)}=((1-\omega)u,(1-\omega)v)_{[t]^{-b/2}L^2(\R\times 2\B)}\\+\sum_{j=1}^N(\delta_{[t]}^{-1}(\omega\varphi_j u),
\delta_{[t]}^{-1}(\omega\varphi_j v))_{r^{-n/2}L^2(\R\times\R_+\times \R^q\times X)}.\end{multline}
\begin{Rem}
The transformations \eqref{2-9} define a strongly continuous groups of isomorphisms
$\psi_\lambda: H^{s,\gamma_1}_{\textup{cone}}(B^\asymp) \to H^{s,\gamma_1}_{\textup{cone}}(B^\asymp)$
for any $s,\gamma_1 \in \R$. The operators $\psi_\lambda$ are unitary in $H^{0,0}_{\textup{cone}}(B^\asymp)$.
\end{Rem}
\begin{Rem}\label{2-38}
The scalar product in $H^{0,0}_{\textup{cone}}(B^\asymp)$ extends
$(\cdot,\cdot): C_0^\infty(\R\times (B\setminus Y))\times C_0^\infty(\R\times (B\setminus Y))\to \C$ to a non-degenerate sesquilinear  pairing
\[H^{s,\gamma_1}_{\textup{cone}}(B^\asymp)\times H^{-s,-\gamma_1}_{\textup{cone}}(B^\asymp)\to \C\]
for every $s,\gamma_1\in \R$.
\end{Rem}
\begin{Thm}\label{2-34}
For any real $s_0\leq s_1$, $\gamma_{1,0}\leq \gamma_{1,1}$ and $0\leq \theta \leq 1$ we have
\[[H^{s_0,\gamma_{1,0}}_{\textup{cone}}(B^\asymp),H^{s_1,\gamma_{1,1}}_{\textup{cone}}(B^\asymp)]_{\theta}=H^{s,\gamma_1}_{\textup{cone}}(B^\asymp)\]
for $s=(1-\theta)s_0+\theta s_1$, $\gamma_1=(1-\theta)\gamma_{1,0}+\theta \gamma_{1,1}$.
\end{Thm}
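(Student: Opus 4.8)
The plan is to prove Theorem \ref{2-34} by the same mechanism used for Theorem \ref{2-39}: realise $H^{s,\gamma_1}_{\textup{cone}}(B^\asymp)$ as a retract of a direct sum of two model scales whose interpolation behaviour is already available, and exploit the fact that the retraction and coretraction can be chosen independently of $s$ and $\gamma_1$. Since complex interpolation commutes with retracts, this reduces the assertion to the corresponding interpolation identity for each summand of the model direct sum.

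Concretely, I would first fix the partition of unity $\{\varphi_j\}$ and the cut-off functions entering Definition \ref{1-sp}, together with slightly larger auxiliary functions $\tilde\varphi_j$ with $\varphi_j\prec\tilde\varphi_j$, $\omega\prec\tilde\omega$ and $(1-\omega)\prec\tilde\psi$, and form the coretraction $\iota u:=\big((1-\omega)u,(\omega\varphi_1u),\dots,(\omega\varphi_Nu)\big)$ with values in $G^{s,\gamma_1}:=H^s_{\textup{cone}}((2\B)^\asymp)\oplus\bigoplus_{j=1}^N\mathcal{W}^{s,\gamma_1}_{\textup{cone}}(\R\times\R^q\times X^\wedge)$, and the reconstruction map $\pi(v_0,v_1,\dots,v_N):=\tilde\psi v_0+\sum_j\tilde\varphi_j\tilde\omega v_j$, so that $\pi\iota=\mathrm{id}$. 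Continuity of $\pi$ on every scale rests on boundedness of multiplication by the cut-offs $\tilde\varphi_j(y)\tilde\omega(r)$ on the $\mathcal{W}^{s,\gamma_1}_{\textup{cone}}$-spaces — exactly the fact underlying Remark \ref{2-40}(ii), transferred via Remark \ref{1-speq} to $\mathcal{W}^s(\R\times\R^q,\mathcal{K}^{s,\gamma_1}(X^\wedge))$ — while boundedness of multiplication by $\tilde\psi$ off the edge is classical. As $\iota$ and $\pi$ do not depend on $(s,\gamma_1)$, the interpolation-of-retracts principle leaves us to interpolate each summand of $G^{s,\gamma_1}$ separately.

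For the first summand one invokes the interpolation property of the cone Sobolev scale $H^s_{\textup{cone}}$ on the smooth compact cross section $2\B$ with conical exit to infinity: via the characterisation of $H^s_{\textup{cone}}((2\B)^\asymp)$ recalled after \eqref{1-35} this follows from standard interpolation of (weighted) Sobolev spaces on $\R^{n+1}$, as in \cite{Kapa10}, \cite{Hirs2}. For the second summand one uses the $(s,\gamma_1)$-independent isomorphism $\delta_{[t]}^{-1}\colon\mathcal{W}^{s,\gamma_1}_{\textup{cone}}(\R\times\R^q\times X^\wedge)\to\mathcal{W}^s(\R^{1+q},\mathcal{K}^{s,\gamma_1}(X^\wedge))$ of \eqref{1-87} to reduce to interpolating the abstract edge spaces $\mathcal{W}^s(\R^{1+q},\mathcal{K}^{s,\gamma_1}(X^\wedge))$, which is done in two stages: first $[\mathcal{K}^{s_0,\gamma_{1,0}}(X^\wedge),\mathcal{K}^{s_1,\gamma_{1,1}}(X^\wedge)]_\theta=\mathcal{K}^{s,\gamma_1}(X^\wedge)$, the group action $\kappa_\delta$ being common to all these spaces; then the abstract fact that $[H_0,H_1]_\theta=H$ (Hilbert spaces with one and the same group action) implies $[\mathcal{W}^{s_0}(\R^q,H_0),\mathcal{W}^{s_1}(\R^q,H_1)]_\theta=\mathcal{W}^s(\R^q,H)$, both ingredients available in \cite{Kapa10} (cf. also the proofs of Proposition \ref{1-84} and Theorem \ref{2-39}). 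Combining the two summand identities with the retract argument gives the theorem.

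The main obstacle I anticipate is not the abstract interpolation step but the verification that $\pi$ genuinely sends $G^{s,\gamma_1}$ back into the single space $H^{s,\gamma_1}_{\textup{cone}}(B^\asymp)$, uniformly in $\theta$: one must check that no contradiction arises on the overlap of $\operatorname{supp}\omega$ and $\operatorname{supp}(1-\omega)$ with respect to the $|t|\to\infty$ behaviour (a point already observed in the text following the $H^{0,0}_{\textup{cone}}$-norm formula after Definition \ref{1-sp}), and that the dilation $\delta_{[t]}$ hidden in the $\mathcal{W}^{s,\gamma_1}_{\textup{cone}}$-norm — which rescales $r$ and $y$ by $[t]$ — does not destroy boundedness of multiplication by $\tilde\varphi_j(y)\tilde\omega(r)$. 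Conjugating by $\delta_{[t]}^{-1}$ turns the latter into multiplication by $\tilde\varphi_j([t]^{-1}y)\tilde\omega([t]^{-1}r)$, which one estimates by the twisted symbol estimates of order $0$ exactly as indicated in the discussion around Remark \ref{2-40} (on the relevant support $\langle t,y\rangle\le C[t]$, so each $y$-derivative produces the required gain $[t]^{-|\alpha|}$); once these uniformities are in place the retract argument closes the proof.
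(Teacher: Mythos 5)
Your argument is correct and follows essentially the same route as the paper, which simply reduces the assertion to the known interpolation properties of the constituent spaces in the defining norm (citing \cite{Kapa10}, \cite{Hirs2}); your retract/coretraction construction with the cut-offs and the $\delta_{[t]}$-conjugation is just the explicit implementation of that reduction. The compatibility on the overlap of $\textup{supp}\,\omega$ and $\textup{supp}\,(1-\omega)$ that you flag is indeed the only point needing care, and it is exactly the equivalence of the two localisations stated in the text after \eqref{1-87}.
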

\begin{proof}
The assertion can be reduced again to the known interpolation properties of the space involved in Definition \ref{2-8}.
\end{proof}
\begin{Def}
The space $R^m_G(\Sigma\times
\R^d)_{\mathcal{O}}$ is defined to
be the set of all operator functions $g(v,\zeta)$ such that
$$g(v,\zeta)\in S_{\textup{cl}}^m(\Sigma\times
\R^d;\mathcal{K}^{s,(\gamma_1,\gamma_2);e}(B^\wedge),\mathcal{K}^{s^\prime,(\gamma_1^\prime,\gamma_2^\prime);e^\prime}(B^\wedge)),$$
$$g^*(v,\zeta)\in S_{\textup{cl}}^m(\Sigma\times
\R^d;\mathcal{K}^{s,(\gamma_1,\gamma_2);e}(B^\wedge),\mathcal{K}^{s^\prime,(\gamma_1^\prime,\gamma_2^\prime);e^\prime}(B^\wedge))$$
for all $s,s^\prime,\gamma_1,\gamma_1^\prime,\gamma_2,\gamma_2^\prime,e,e^\prime\in \R.$ Such a $g$ is also called a flat Green symbol
of order $m$ in the second order corner calculus. 
\end{Def}
\begin{Rem}
It can easily be proved that for cut-off functions $\omega(r)$, $\omega^\prime(r)$ on $B$
and any $g\in R^m_G(\Sigma\times \R^d)_\mathcal{O}$, also $\omega g\omega^\prime, (1-\omega)g\omega^\prime, \omega g(1-\omega^\prime), 
(1-\omega)g(1-\omega^\prime)$ belong to $ R^m_G(\Sigma\times \R^d)_\mathcal{O},$ cf. also Remark \textup{\ref{2-40} (ii)}.
\end{Rem}

\section{The Mellin-edge quantisation}
We now turn to the Mellin-edge quantisation for operator-valued edge symbols along an edge of second singularity order.
\begin{Def}\label{1-4}
The space of holomorphic \textup{(}parameter-dependent\textup{)} Mellin symbols
$M^m_{\mathcal{O}}(B,\boldsymbol{g};\R^l)$ is
defined to be the set of all $h(w,\lambda)\in
\mathcal{A}(\C,L^m(B,\boldsymbol{g};\R^l))$ such
that $h|_{\Gamma_\delta}\in
L^m(B,\boldsymbol{g};\Gamma_\delta\times\R^l)$ for
every $\delta\in \R$, uniformly in compact $\delta$-intervals.
\end{Def}
The following theorem is a Mellin quantisation result.
\begin{Thm}\label{1-5}
For every $p(t,v,\tau,\zeta)=\tilde{p}(t,v,t\tau,t\zeta)$,
$\tilde{p}(t,v,\tilde{\tau},\tilde{\zeta})\in
C^\infty(\overline{\R}_+\times
\Sigma,L^m(B,\boldsymbol{g};\R^{1+d}_{\tilde{\tau},\tilde{\zeta}}))$,
$\Sigma\subseteq \R^d$ open, there exists an
$h(t,v,w,\zeta)=\tilde{h}(t,v,w,t\zeta)$,
$\tilde{h}(t,v,w,\tilde{\zeta})\in C^\infty(\overline{\R}_+\times
\Sigma,M_{\mathcal{O}}^m(B,\boldsymbol{g};\R^{d}_{\tilde{\zeta}}))$
such that
$\textup{Op}_t(p)(v,\zeta)=\textup{op}^\beta_{M_t}(h)(v,\zeta)$
\textup{mod}
$C^\infty(\Sigma,L^{-\infty}(B^\wedge,\boldsymbol{g};\R^{d}_{{\zeta}}))$
for every $\beta\in \R$; the operators are regarded first
as continuous maps
$C^\infty_0(\R_+,H^{\infty,\gamma_1}(B)) \to
C^\infty(\R_+,H^{\infty,\gamma_1-\mu}(B))$.
\end{Thm}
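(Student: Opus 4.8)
\emph{Approach.} Theorem \ref{1-5} is the corner analogue, one singularity order higher, of the scalar Mellin quantisation recalled around \eqref{1-39}: there the fibre is the closed manifold $X$ and the coefficients lie in $L^\mu_{\textup{cl}}(X;\cdot)$, whereas here the fibre is the manifold $B$ with edge and the coefficients lie in the parameter-dependent edge algebra $L^m(B,\boldsymbol{g};\cdot)$ of Definition \ref{1-95}, with holomorphy understood as in Definition \ref{1-4}. The plan is to run the proof of \eqref{1-39} \emph{with coefficients in the edge algebra over $B$}; the new point is to check that the operations used there are compatible with the internal structure of $L^m(B,\boldsymbol{g};\cdot)$ and $M^m_{\mathcal{O}}(B,\boldsymbol{g};\cdot)$. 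All the manipulations below take place on $C^\infty_0(\R_+,H^{\infty,\gamma_1}(B))$: by Theorem \ref{1-8} every element of $L^m(B,\boldsymbol{g};\R^l)$ acts continuously $H^{s,\gamma_1}(B)\to H^{s-\nu,\gamma_1-\mu}(B)$, so $\textup{Op}_t$ and $\textup{op}^\beta_{M_t}$ of such operator families are well defined between the stated spaces.

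\emph{Construction.} Recall that under $t=e^{-\boldsymbol{t}}$, $\boldsymbol{t}\in\R$, the Mellin transform in $t$ becomes a Fourier transform; accordingly the desired identity is reduced to producing, from the corner-degenerate symbol $\tilde p(t,v,\tilde\tau,\tilde\zeta)\in C^\infty(\overline{\R}_+\times\Sigma,L^m(B,\boldsymbol{g};\R^{1+d}))$, a holomorphic Mellin symbol $\tilde h$. This is done by the kernel cut-off operator: fixing a cut-off function $\psi(\boldsymbol{t})$ with $\psi\equiv1$ near $\boldsymbol{t}=0$ and setting
\[
\tilde h(t,v,w,\tilde\zeta):=\int e^{i\boldsymbol{t}\,\textup{Im}\,w}\,\psi(\boldsymbol{t})\,\big(\mathcal{F}^{-1}_{\tilde\tau\to\boldsymbol{t}}\,\tilde p\big)(t,v,\boldsymbol{t},\tilde\zeta)\,d\boldsymbol{t},
\]
the compact support of $\psi$ and Paley--Wiener give an entire dependence on $w$ with values in $L^m(B,\boldsymbol{g};\R^d_{\tilde\zeta})$, and the symbol estimates of $L^m(B,\boldsymbol{g};\cdot)$ yield the bounds on the lines $\Gamma_\delta$, uniform on compact $\delta$-intervals, required by Definition \ref{1-4}; hence $\tilde h(t,v,\cdot,\tilde\zeta)\in M^m_{\mathcal{O}}(B,\boldsymbol{g};\R^d_{\tilde\zeta})$, smoothly up to $t=0$. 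Because $\psi\equiv1$ near $\boldsymbol{t}=0$, the difference $\tilde h-\tilde p$ restricted to the relevant weight line corresponds to $(1-\psi)$ times the same kernel, hence is smoothing on $B$ and Schwartz in the covariable, and therefore $\textup{Op}_t$-quantises to an element of $C^\infty(\Sigma,L^{-\infty}(B^\wedge,\boldsymbol{g};\R^d))$. Putting $\tilde\zeta=t\zeta$ yields $h(t,v,w,\zeta)=\tilde h(t,v,w,t\zeta)$, and holomorphy of $\tilde h$ in $w$ makes $\textup{op}^\beta_{M_t}(h)$ independent of $\beta$ by Cauchy's theorem, which is the final clause. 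The smooth dependence of $\tilde p$ on $t$ up to $t=0$ is absorbed in the customary way: Taylor expansion $\tilde p(t,\cdot)=\sum_{k<N}\tfrac{t^k}{k!}\partial_t^k\tilde p(0,\cdot)+t^N r_N(t,\cdot)$, kernel cut-off applied to each Taylor coefficient, and asymptotic summation of the resulting Mellin symbols inside $C^\infty(\overline{\R}_+\times\Sigma,M^m_{\mathcal{O}}(B,\boldsymbol{g};\R^d))$, with the lower-order remainders controlled through Theorems \ref{1-8} and \ref{1-66}.

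\emph{Main obstacle.} The genuinely new work, beyond \eqref{1-39}, is that $L^m(B,\boldsymbol{g};\R^l)$ is not a single Fréchet space but a union of Fréchet subspaces labelled by the Green and smoothing-Mellin asymptotic types (cf. the Remark after Definition \ref{1-95}), and is itself defined through twisted operator-valued symbol estimates over $X^\wedge$ with the group action $\kappa$. One must verify that the kernel cut-off — a convolution in the covariable, acting pointwise in the remaining variables and in particular not moving the asymptotic data — really preserves these estimates and the decomposition $a+m+g$ of an amplitude in $R^\mu(\Omega\times\R^{q+l},\boldsymbol{g})$; this reduces to combining the twisted symbolic estimates for $a$, $m$, $g$ with the elementary fact that multiplication by $\psi\in C_0^\infty$ in the $\boldsymbol{t}$-variable improves covariable decay uniformly on the lines $\Gamma_\delta$. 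One also has to follow the interaction of the corner scaling $\tilde\zeta=t\zeta$ with $\kappa$ and with the scaling $\psi_\lambda$ of \eqref{2-9} through these estimates, and check that the remainder lands in $C^\infty(\Sigma,L^{-\infty}(B^\wedge,\boldsymbol{g};\R^d))$ with the correct weights and asymptotics rather than in a weaker smoothing class. The conical exit $t\to\infty$ plays no role at this stage, thanks to the compact supports in the mapping $C^\infty_0(\R_+,H^{\infty,\gamma_1}(B))\to C^\infty(\R_+,H^{\infty,\gamma_1-\mu}(B))$; it re-enters only in the later Theorems \ref{1-18} and \ref{mainth}.
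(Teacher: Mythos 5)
The paper does not prove Theorem \ref{1-5} itself: it refers to \cite{Schu27}, \cite{Haru11} and only records the sharper statement \eqref{1-9}, namely that $h$ can be chosen so that the left-over term equals $\textup{Op}_t((1-\psi(\frac{t'}{t}))p)$ for some $\psi\in C_0^\infty(\R_+)$ equal to $1$ near $1$. Your overall strategy --- localise the kernel near the diagonal with a compactly supported $\psi$, use a Paley--Wiener argument to obtain entireness in the Mellin covariable, observe that the off-diagonal remainder is smoothing, and note that the only genuinely new point is compatibility of these operations with coefficients in the edge algebra $L^m(B,\boldsymbol{g};\cdot)$ --- is exactly the strategy of those references, so in spirit you are on the intended track, and your ``main obstacle'' paragraph identifies the right bookkeeping issues.

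However, the central step as you wrote it has a genuine gap. The displayed formula $\tilde h(t,v,w,\tilde\zeta)=\int e^{i\boldsymbol{t}\,\textup{Im}\,w}\,\psi(\boldsymbol{t})\,(\mathcal{F}^{-1}_{\tilde\tau\to\boldsymbol{t}}\tilde p)(t,v,\boldsymbol{t},\tilde\zeta)\,d\boldsymbol{t}$ is not holomorphic in $w$ at all (the exponent depends only on $\textup{Im}\,w$), and, more substantially, it is not a Mellin symbol construction: the variable dual to $\tilde\tau$ in the kernel of $\textup{Op}_t(p)$ with $p(t,v,\tau,\zeta)=\tilde p(t,v,t\tau,t\zeta)$ is $1-\frac{t'}{t}$, whereas a holomorphic Mellin symbol must be produced by the Mellin transform of the diagonally cut-off kernel in the quotient $\frac{t}{t'}$ (equivalently, a Fourier--Laplace transform in the difference of the logarithmic variables after $t=e^{-\boldsymbol{t}}$); this is precisely why the $\psi$ in \eqref{1-9} is a function of $\frac{t'}{t}$ equal to $1$ near $1$. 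The passage from the Fourier quantisation $\textup{Op}_t$ to the Mellin quantisation $\textup{op}^\beta_{M_t}$ --- the only place where the degenerate structure $\tilde p(t,v,t\tau,t\zeta)$ is actually used --- is the heart of the proof, and your phrase ``the difference $\tilde h-\tilde p$ restricted to the relevant weight line corresponds to $(1-\psi)$ times the same kernel'' presupposes exactly the identification that has to be established. In addition, the claim that the remainder is ``smoothing on $B$ and Schwartz in the covariable, and therefore quantises to $C^\infty(\Sigma,L^{-\infty}(B^\wedge,\boldsymbol{g};\R^d))$'' is not yet an argument: membership in $L^{-\infty}(B^\wedge,\boldsymbol{g};\R^d)$ means continuity into the weighted spaces with asymptotics for operators over the open cone $B^\wedge$, and this has to be extracted from the off-diagonal support of the kernel in $(t,t')$ together with the $\zeta$-behaviour, not merely from order $-\infty$ on the cross-section $B$. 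All of this is repairable along the lines of \cite{Schu27}, \cite{Haru11} (and your Taylor-expansion remark is then unnecessary), but as it stands the proposal replaces the decisive Fourier-to-Mellin comparison by notation.
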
 
Concerning the proof and more details, cf. \cite{Schu27},
\cite{Haru11}.
More precisely, we can choose $h$ in such a way that for some
$\psi \in C_0^\infty(\R_+)$ which is equal to $1$ close to $1$ we
have \begin{equation}\label{1-9}
\textup{Op}_t(p)(v,\zeta)-\textup{op}_M^{\gamma_2-\frac{b}{2}}(h)(v,\zeta)=\textup{Op}_t((1-\psi(\frac{t^\prime}{t}))p)(v,\zeta)
\end{equation} Set $p_0(t,v,\tau,\zeta):=\tilde{p}(0,v,t\tau,t\zeta)$,
$h_0(t,v,\tau,\zeta):=\tilde{h}(0,v,t\tau,t\zeta)$. Note that then
we also have
$\textup{Op}_t(p_0)(v,\zeta)=\textup{op}_M^\beta(h_0)(v,\zeta)$
mod
$C^\infty(\Sigma,L^{-\infty}(B^\wedge,\boldsymbol{g};\R^{d}_{{\zeta}}))$
for every $\beta\in\R$.
We now formulate the non-smoothing part of edge
symbols for the calculus over a (stretched) wedge $B^\wedge\times
\Sigma$. Similarly as in the case $X^\wedge\times \Omega$, cf.
Section 1.1, we fix cut-off functions $\omega$, $\omega^\prime$
and excision functions $\chi$, $\chi^\prime$ on the $t$ half-axis
such that $\omega\prec \omega^\prime $, $\chi\prec \chi^\prime$,
$\omega+\chi=1$ and cut-off functions $\epsilon$, $\epsilon^\prime$. Then we form
\begin{equation}\label{2-1}
a(v,\zeta):=t^{-\mu}\epsilon\{\omega_\zeta\textup{op}_M^{\gamma_2-\frac{b}{2}}
(h)(v,\zeta)\omega_\zeta^\prime+\chi_\zeta\textup{Op}_t(p)(v,\zeta)\chi_\zeta^\prime\}\epsilon^\prime+r(v,\zeta).
\end{equation} Here $b:=\textup{dim}\,B$, the operator families
$p$ and $h$ are as in Theorem \ref{1-5}, and $r(v,\zeta)\in
C^\infty(\Sigma,L^\mu(B^\wedge,\boldsymbol{g};\R^d_\zeta))$
is a family localised on a compact subset $K$ of $\R_+$ (i.e.
$\varphi r(v,\zeta)\varphi^\prime=0$ for all
$\varphi,\varphi^\prime \in C_0^\infty(\R_+)$ vanishing in a
neighbourhood of $K$). Similarly as in the case of a manifold with smooth edge
it can be proved that $$a(v,\zeta)\in S^\mu(\Sigma\times \R^d;\mathcal{K}^{s,(\gamma_1,\gamma_2)}(B^\wedge),
\mathcal{K}^{s-\mu,(\gamma_1-\mu,\gamma_2-\mu)}(B^\wedge)).$$\\
The operator functions $a(v,\zeta)$ belong to
$L^\mu(B^\wedge,\boldsymbol{g};\R^d_\zeta)$ for
every $v\in \Sigma$. As such they have the symbolic structure from
the calculus over $B^\wedge$, namely,
\begin{equation}\label{2-2}
\sigma_0(a) \quad \textup{and} \quad \sigma_1(a)
\end{equation}
and so-called reduced symbols. To be more precise, \eqref{2-2} are
the usual (parameter-dependent) principal symbols of edge
operators, where $\sigma_i$ refers to $s_i(B)$, $i=0,1$. In
addition, in the splitting of variables $(t,v)$ close to $t=0$ the
symbols occur in the form $t^{-\mu}\sigma_0(a)(t,v,t\tau,t\zeta)$
and $t^{-\mu}\sigma_1(t,v,t\tau,t\zeta)$ for so-called reduced
symbols $\tilde{\sigma}_0(a)(t,v,\tilde{\tau},\tilde{\zeta})$ and
$\tilde{\sigma}_1(a)(t,v,\tilde{\tau},\tilde{\zeta})$,
respectively, which are smooth up to $t=0$. Moreover, close to
$s_1(B)$ in the splitting of variables $(r,x,y)\in X^\wedge\times
\Omega$ and the covariables $(\rho,\xi,\eta)$ for the $0$-th
component we have a degenerate behaviour
$$r^{-\mu}\tilde{\sigma}_0(t,r,x,y,v,r\tilde{\tau},r\rho,\xi,r\eta,r\tilde{\zeta})$$
and there is  another reduced symbol
$\tilde{\tilde{\sigma}}_0(t,r,x,y,v,\tilde{\tilde{\tau}},\tilde{\rho},\xi,\tilde{\eta},\tilde{\tilde{\zeta}})$
which is smooth up to $t=r=0$.\\
\\
The principal edge symbol belonging to the edge $\Sigma$ is
defined as
\begin{equation}\label{2-3}
\sigma_2(a)(v,\zeta):=t^{-\mu}\{\omega(t|\zeta|)\textup{op}_M^{\gamma_2-\frac{b}{2}}(h_0)(v,\zeta)\omega^\prime(t|\zeta|)+
\chi(t|\zeta|)\textup{op}_t(p_0)(v,\zeta)\chi^\prime(t|\zeta|)\},
\end{equation} parametrised by $(v,\zeta)\in T^*\Sigma\setminus
0$. It represents a family of continuous operators
$$\sigma_2(a)(v,\zeta):\mathcal{K}^{s,\gamma}(B^\wedge)\to
\mathcal{K}^{s-\mu,\gamma-\mu}(B^\wedge)$$ for every $s\in \R$,
$\gamma-\mu:=(\gamma_1-\mu,\gamma_2-\mu)$. 

In the edge quantisation expression \eqref{2-1} that quantises the operator function $p(t,v,\tau,\zeta):=\tilde{p}(t,v,t\tau,t\zeta)$ and produces the operator-valued symbol $a(v,\zeta )$ we are mainly interested in the non-smoothing ingredients of the families $p$ and $h,$ respectively. Therefore, before we pass to the alternative Mellin-edge quantised family we split up some unnecessary smoothing terms. In addition because of linearity it is admitted to consider local expressions close to the edge of $B.$ In other words we start with
$$p(t,v,\tau ,\zeta )=\tilde{p}(t,v,t\tau ,t\zeta )$$
for 
$$\tilde{p}(t,v,\tilde{\tau },\tilde{\zeta })\in C^\infty (\overline{\R}_+\times \Sigma ,L^m(B,\boldsymbol{g};\R^d_{\tilde{\zeta }})),$$
where
$$\tilde{p}(t,v,\tilde{\tau },\tilde{\zeta }):=\textup{Op}_y\textup{op}^{\gamma _1-n/2}_{M_r}(r^{-m}\epsilon _1h_1\epsilon '_1)(t,v,\tilde{\tau },\tilde{\zeta })$$
for cut-off functions $\epsilon (r),\epsilon '(r)$ and a Mellin symbol
$$h_1(t,r,y,v,z,\tilde{\tau },\tilde{\zeta })=\tilde{h}_1(t,r,y,v,z,r\tilde{\tau },r\tilde{\zeta }),$$
$$\tilde{h}_1(t,r,y,v,z,\tilde{\tilde{\tau  }},\tilde{\tilde{\zeta }})\in C^\infty (\overline{\R}_+\times \overline{\R}_+\times \R^q\times \Sigma ,M^m_{\mathcal{O}_z}(X,\R^{1+d}_{\tilde{\tilde{\tau  }},\tilde{\tilde{\zeta }}})).$$
\begin{Thm}\label{mainth}
An edge amplitude function
$$a(v,\zeta)=t^{-\mu}\epsilon\{\omega_\zeta
\textup{op}_M^{\gamma_2-\frac{b}{2}}(h)(v,\zeta)\omega_\zeta^\prime+\chi_\zeta\textup{Op}_t(p)(v,\zeta)\chi^\prime_\zeta\}\epsilon^\prime$$
as in \eqref{2-1} can be
expressed in the form
$$a(v,\zeta)=t^{-\mu}\epsilon\textup{op}_M^{\gamma_2-\frac{b}{2}}(h)(v,\zeta)\epsilon^\prime+g(v,\zeta)$$
for some $g(v,\zeta)\in R^\mu_G(\Sigma\times
\R^d)_{\mathcal{O}}$. Then \eqref{2-3} can be written as
$$\sigma_2(a)(v,\zeta)=t^{-\mu}\textup{op}_M^{\gamma_2-\frac{b}{2}}(h_0)(v,\zeta)+\sigma_2(g)(v,\zeta).$$
\end{Thm}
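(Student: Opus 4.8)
The plan is to pass from the Fourier‑based family $\textup{Op}_t(p)$ to the Mellin‑quantised one by the corner Mellin quantisation, to reorganise \eqref{2-1} by elementary algebraic identities, and to recognise the leftover as a flat Green symbol of the second order corner calculus. By linearity and the reduction made just before the theorem we may assume $p(t,v,\tau,\zeta)=\tilde p(t,v,t\tau,t\zeta)$ in the explicit local form built from the holomorphic Mellin family $h_1$ over the $X$‑cone, and the localised remainder $r(v,\zeta)$ of \eqref{2-1} absent. First I would invoke Theorem \ref{1-5} together with \eqref{1-9}: there is $h=\tilde h(t,v,w,t\zeta)$, $\tilde h\in C^\infty(\overline{\R}_+\times\Sigma,M^m_{\mathcal{O}}(B,\boldsymbol{g};\R^d))$, with
\begin{equation*}
\textup{Op}_t(p)(v,\zeta)=\textup{op}_M^{\gamma_2-\frac b2}(h)(v,\zeta)+C(v,\zeta),\qquad C(v,\zeta):=\textup{Op}_t\big((1-\psi(\tfrac{t'}{t}))p\big)(v,\zeta),
\end{equation*}
where $\psi\in C_0^\infty(\R_+)$ equals $1$ near $1$ and $C\in C^\infty(\Sigma,L^{-\infty}(B^\wedge,\boldsymbol{g};\R^d))$.

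Writing $H:=\textup{op}_M^{\gamma_2-\frac b2}(h)$ and inserting this into \eqref{2-1}, one uses $\omega_\zeta+\chi_\zeta=1$ together with the disjoint‑support identities $(1-\omega'_\zeta)\omega_\zeta=0$ and $(1-\chi'_\zeta)\chi_\zeta=0$ (valid since $\omega\prec\omega'$, $\chi\prec\chi'$) to rewrite $\omega_\zeta H\omega'_\zeta+\chi_\zeta H\chi'_\zeta=H-\omega_\zeta H(1-\omega'_\zeta)-\chi_\zeta H(1-\chi'_\zeta)$, whence
\begin{equation*}
a(v,\zeta)=t^{-\mu}\epsilon\,H(v,\zeta)\,\epsilon'+g(v,\zeta),\qquad g(v,\zeta):=t^{-\mu}\epsilon\big\{\chi_\zeta C\chi'_\zeta-\omega_\zeta H(1-\omega'_\zeta)-\chi_\zeta H(1-\chi'_\zeta)\big\}\epsilon'.
\end{equation*}

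The substantive step, and the main obstacle, is to prove $g\in R^\mu_G(\Sigma\times\R^d)_{\mathcal{O}}$. For fixed $(v,\zeta)$ each summand of $g$ is a composition (cut‑off)$\,\circ\,$(operator in $t$)$\,\circ\,$(cut‑off) whose two cut‑offs have disjoint supports: for the first two because $\textup{supp}\,\omega_\zeta\cap\textup{supp}(1-\omega'_\zeta)=\emptyset$ and $\textup{supp}\,\chi_\zeta\cap\textup{supp}(1-\chi'_\zeta)=\emptyset$, and for the third because $1-\psi(t'/t)$ vanishes near the diagonal $t'=t$ while $\chi_\zeta,\chi'_\zeta$ moreover localise away from $t=0$. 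One then argues, as in the edge‑level relation \eqref{eq}, that pseudo‑locality in $t$ makes each summand smoothing along $t$; holomorphy of $h$ permits arbitrary shifts of the Mellin $w$‑contour, giving image flatness in the weight $\gamma_2$, and holomorphy of the inner Mellin symbol $h_1$ over $X^\wedge$ gives image flatness in the weight $\gamma_1$; the support gap together with the $[\zeta]$‑rescaling inside the cut‑offs gives infinite flatness as $t\to\infty$, i.e. an arbitrary weight $e$. Hence $g(v,\zeta)$ maps $\mathcal{K}^{s,(\gamma_1,\gamma_2);e}(B^\wedge)$ into $\mathcal{K}^{s',(\gamma_1',\gamma_2');e'}(B^\wedge)$ for all indices, and likewise for $g^*$; the twisted symbol estimates in $(v,\zeta)$ follow from the quasi‑homogeneity of the cut‑off factors in $\zeta$ (Lemma \ref{1-80}) combined with the parameter‑dependent norm bound of Theorem \ref{1-8} for $H(v,\zeta)\in L^m(B,\boldsymbol{g};\R^d)$, the excess $\langle\zeta\rangle$‑growth being absorbed by the flatness via the growth‑comparing Theorem \ref{1-18}. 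Here the singular geometry of $B^\wedge$ enters decisively: the edge $\R_+\times Y$ runs to infinity together with $t$, so the flatness at $t\to\infty$ has to be controlled jointly with the conical‑exit behaviour along that edge — precisely the difficulty for which Theorems \ref{1-8} and \ref{1-18} were prepared. This proves the first assertion.

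For the second assertion I would repeat the reorganisation with the $t=0$‑frozen, $\zeta$‑homogeneous data. As noted after Theorem \ref{1-5}, $\textup{Op}_t(p_0)(v,\zeta)=\textup{op}_M^{\gamma_2-\frac b2}(h_0)(v,\zeta)+C_0(v,\zeta)$ with $C_0:=\textup{Op}_t((1-\psi(t'/t))p_0)\in C^\infty(\Sigma,L^{-\infty}(B^\wedge,\boldsymbol{g};\R^d))$. Performing the same steps with $\omega(t|\zeta|),\omega'(t|\zeta|),\chi(t|\zeta|),\chi'(t|\zeta|)$ in place of $\omega_\zeta,\omega'_\zeta,\chi_\zeta,\chi'_\zeta$ turns \eqref{2-3} into $\sigma_2(a)(v,\zeta)=t^{-\mu}\textup{op}_M^{\gamma_2-\frac b2}(h_0)(v,\zeta)+g_0(v,\zeta)$, where $g_0$ is the corresponding leftover built from $H_0:=\textup{op}_M^{\gamma_2-\frac b2}(h_0)$ and $C_0$. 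All ingredients of $g_0$ are homogeneous in $\zeta$ in the twisted sense relative to the group action \eqref{2-9}, so $g_0$ is twisted homogeneous of degree $\mu$; being exactly the $t=0$‑frozen homogenisation of $g$, it coincides with the principal edge symbol $\sigma_2(g)$ by uniqueness of the principal component, which gives the stated formula.
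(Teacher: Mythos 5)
Your algebraic reduction coincides with the paper's: using $\omega_\zeta+\chi_\zeta=1$ you rewrite the bracket in \eqref{2-1} as $\textup{op}_M^{\gamma_2-\frac{b}{2}}(h)$ plus the Fourier--Mellin difference term $\chi_\zeta\{\textup{Op}_t(p)-\textup{op}_M^{\gamma_2-\frac{b}{2}}(h)\}\chi'_\zeta$ minus the two disjoint-support terms $\omega_\zeta\,\textup{op}_M^{\gamma_2-\frac{b}{2}}(h)\,(1-\omega'_\zeta)$ and $\chi_\zeta\,\textup{op}_M^{\gamma_2-\frac{b}{2}}(h)\,(1-\chi'_\zeta)$, which is exactly the paper's splitting into $g_1+g_2$; the passage to $\sigma_2$ by freezing at $t=0$ and invoking twisted homogeneity also matches. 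The genuine gap lies in the only substantive claim, namely that these leftover terms belong to $R^\mu_G(\Sigma\times\R^d)_{\mathcal{O}}$. In the paper this is precisely the content of Propositions \ref{1-6} and \ref{1-7}, and the proof of Theorem \ref{mainth} consists of the decomposition plus a citation of those two results; you neither cite them nor supply an equivalent argument, and the heuristic you put in their place does not stand up. Membership in the flat Green class requires classical operator-valued symbol estimates with \emph{arbitrary} gains in $s'$, $\gamma_1'$, $\gamma_2'$, $e'$, together with the same for the formal adjoint, and ``pseudo-locality makes each summand smoothing along $t$'' does not produce this. The actual mechanism is quantitative: the support gap yields the kernel factor $(\log\frac{t}{t'})^{-N}$ of Lemma \ref{1-14}, and integration by parts in the Mellin covariable $w$ (possible because $h$ is holomorphic) converts it into $\partial_w^N h$, i.e. an arbitrary order and first-weight gain; a contour shift handles $\gamma_2$ versus $\gamma_2'$; Theorem \ref{1-18} then trades the $t$-flatness against the exit weight $e$ and the $\mathcal{H}$-versus-$\mathcal{K}$ discrepancy; and Taylor expansions in $t,t'$ (plus, for the difference term, the kernel estimate built on \eqref{1-9}, Theorem \ref{1-8} and a tensor-product expansion of the amplitude) furnish the homogeneous components, i.e. classicality.

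Two of your specific assertions would fail as stated. First, the image flatness in the first weight does not come from holomorphy of the inner symbol $h_1$ over $X^\wedge$; in the paper it comes from the order reduction $\mu-N$ gained by the integration-by-parts argument just described. Second, the twisted symbol estimates cannot be obtained from Lemma \ref{1-80} combined with the norm bound of Theorem \ref{1-8}, because the full $t$-dependent symbols are \emph{not} twisted homogeneous in $\zeta$ for large $|\zeta|$ --- only the $t$-frozen leading parts are, which is exactly why the Taylor-expansion machinery of Propositions \ref{1-6} and \ref{1-7} is needed to organise the remainders into an asymptotic expansion of Green symbols. So the first assertion of the theorem is not proved as written; it requires either a citation of Propositions \ref{1-6} and \ref{1-7} or a reproduction of their proofs, whereas your treatment of the second assertion (the formula for $\sigma_2(a)$) is consistent with the paper once the first part is in place.
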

\begin{proof}
For simplicity we assume $a$ to be independent of $v$; the extension to
the general case is straightforward. First we have
\begin{align}\label{2-4}
 &t^{-\mu}\omega_\zeta\textup{op}_M^{\gamma_2-\frac{b}{2}}(h)(\zeta)
\omega^\prime_\zeta+t^{-\mu}\chi_\zeta\textup{Op}_t(p)(\zeta)\chi^\prime_\zeta\\ \notag=&
t^{-\mu}\omega_\zeta\textup{op}_M^{\gamma_2-\frac{b}{2}}(h)(\zeta)
\omega^\prime_\zeta+t^{-\mu}\chi_\zeta\textup{op}_M^{\gamma_2-\frac{b}{2}}(h)(\zeta)\chi^\prime_\zeta
+t^{-\mu}\chi_\zeta\{\textup{Op}_t(p)(\zeta)-\textup{op}_M^{\gamma_2-\frac{b}{2}}(h)(\zeta)\}\chi^\prime_\zeta\\ \notag =&
t^{-\mu}\textup{op}_M^{\gamma_2-\frac{b}{2}}(h)(\zeta)+g_1(\zeta)+g_2(\zeta)
\end{align} where $$g_1(\zeta)=t^{-\mu}\chi_\zeta\{\textup{Op}_t(p)(\zeta)-\textup{op}_M^{\gamma_2-\frac{b}{2}}(h)(\zeta)\}\chi^\prime_\zeta,$$
$$g_2(\zeta)=-t^{-\mu}\chi_\zeta\textup{op}_M^{\gamma_2-\frac{b}{2}}(h)(\zeta)(1-\chi_\zeta^\prime)
-t^{-\mu}\omega_\zeta\textup{op}_M^{\gamma_2-\frac{b}{2}}(h)(\zeta)(1-\omega_\zeta^\prime).$$
The assertion then follows by multiplying both sides of
\eqref{2-4} by $\epsilon$ from the left, $\epsilon^\prime$ from the
right. In fact, $\epsilon g_1(\zeta)\epsilon^\prime\in
R^\mu_G(\R^d)_{\mathcal{O}}$
corresponds to
Proposition \ref{1-7} below and $\epsilon g_2(\zeta)\epsilon^\prime\in
R^\mu_G(\R^d)_{\mathcal{O}}$ to Proposition \ref{1-6} below;
here we use that $\epsilon g_2(\zeta)\epsilon^\prime \in
R^\mu_G(\R^d)_{\mathcal{O}}$.\\
\\
Analogous conclusions are possible for $\zeta\neq 0$ and without
the factors $\epsilon$, $\epsilon^\prime$, i.e. comparing with
\eqref{2-3} we obtain
$\sigma_2(a)(\zeta)=t^{-\mu}\textup{op}_M^{\gamma_2-\frac{b}{2}}(h_0)(\zeta)+g_{1,(\mu)}(\zeta)+g_{2,(\mu)}(\zeta).$
\end{proof}

\section{Characterisation of Green left-over terms}
 The following lemma may be found in \cite{Gil2}.
\begin{Lem}\label{1-14}
Let
$f(t,t^\prime,\zeta):=\omega_\zeta(t)(\textup{log}\frac{t}{t^\prime})^{-N}(1-\omega^\prime_\zeta(t^\prime))$
for cut-off functions $\omega\prec \omega^\prime$, and $t$,
$t^\prime$ $\in \R$, $\zeta\in \R^q$, $N\in \N$, $N\geq 1$. Then
\begin{equation}\label{1-15}
f(\lambda^{-1}t,\lambda^{-1}t^\prime,\lambda\zeta)=f(t,t^\prime,\zeta)
\quad \textup{for}\quad \lambda\geq 1,\,\,\lvert\zeta\rvert\geq
\textup{const},\,\, t,t^\prime\in \R_+,
\end{equation} for some constant $>0$. Moreover, for every $k,$
$k^\prime$ $\in \N$, $\alpha\in \N^q$, we have
\begin{equation}\label{1-16}
\textup{sup}_{t,t^\prime,\zeta}\{\lvert(t\partial_t)^k(t^\prime\partial_{t^\prime})^{k^\prime}\partial_\zeta^\alpha
f(t,t^\prime,\zeta)\rvert[\zeta]^{|\alpha|}\}<\infty.
\end{equation}In particular, the function $(t,t^\prime)\to [\zeta]^{|\alpha|}\partial^\alpha_\zeta
f(t,t^\prime,\zeta)$ is bounded in $\zeta\in \R^q$ as a function of
$\zeta$ with values in $C^\infty(\R_+\times \R_+)_B$.
\end{Lem}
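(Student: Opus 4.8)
The plan is to dispose of the two displayed claims \eqref{1-15} and \eqref{1-16} one at a time; the concluding ``in particular'' sentence is then nothing but a restatement of \eqref{1-16} in the Fr\'echet topology of $C^\infty(\R_+\times\R_+)_B$. Everything hinges on one elementary preliminary remark: on $\textup{supp}\,\big(\omega(t[\zeta])(1-\omega^\prime(t^\prime[\zeta]))\big)$ one has $t[\zeta]\in\textup{supp}\,\omega$ and $t^\prime[\zeta]\in\textup{supp}(1-\omega^\prime)$, and since $\omega\prec\omega^\prime$ these sets are disjoint, which forces $t/t^\prime\le c$ there for a constant $c<1$ \emph{independent of $\zeta$} (the ratio of the dilated variables does not see $[\zeta]$). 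Hence $\log(t/t^\prime)$ stays in a subset of $(-\infty,0)$ bounded away from $0$, tending to $-\infty$ as $t^\prime\to\infty$; consequently $(\log(t/t^\prime))^{-N}$, together with all of its $(t\partial_t)$- and $(t^\prime\partial_{t^\prime})$-derivatives, is smooth and bounded on that support. This removes the only apparent singularity of $f$. The relation \eqref{1-15} is then immediate: for $\lambda\ge1$ and $|\zeta|\ge\textup{const}$ one has $[\lambda\zeta]=\lambda[\zeta]$, so $\omega_{\lambda\zeta}(\lambda^{-1}t)=\omega(t[\zeta])=\omega_\zeta(t)$, $1-\omega^\prime_{\lambda\zeta}(\lambda^{-1}t^\prime)=1-\omega^\prime_\zeta(t^\prime)$, and $\log\big((\lambda^{-1}t)/(\lambda^{-1}t^\prime)\big)=\log(t/t^\prime)$, and multiplying the three factors gives \eqref{1-15}.

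For \eqref{1-16} I would factor $f=f_1f_2f_3$ with $f_1=\omega(t[\zeta])$, $f_2=(\log(t/t^\prime))^{-N}$, $f_3=1-\omega^\prime(t^\prime[\zeta])$, and expand $(t\partial_t)^k(t^\prime\partial_{t^\prime})^{k^\prime}\partial^\alpha_\zeta f$ by the Leibniz rule. Three elementary observations then finish the estimate. First, $t\partial_t$ and $t^\prime\partial_{t^\prime}$ are scale invariant: with $s=t[\zeta]$ and $[\zeta]$ independent of $t$ one has $t\partial_t=s\partial_s$, so $(t\partial_t)^kf_1=\big((s\partial_s)^k\omega\big)(t[\zeta])$ is bounded (the profile $(s\partial_s)^k\omega$ is smooth and bounded), and likewise for $f_3$; moreover $(t\partial_t)\log(t/t^\prime)=1$, $(t^\prime\partial_{t^\prime})\log(t/t^\prime)=-1$ and the higher such derivatives vanish, so every $(t\partial_t)^k(t^\prime\partial_{t^\prime})^{k^\prime}f_2$ is a constant multiple of $(\log(t/t^\prime))^{-N-k-k^\prime}$, bounded by the preliminary remark. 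Second, the cut-off factors obey the standard symbolic estimate $|\partial^\alpha_\zeta\omega(t[\zeta])|\le c_\alpha[\zeta]^{-|\alpha|}$ uniformly in $t$: expanding by the chain rule, each summand is of the form $\omega^{(p)}(t[\zeta])\prod_{i=1}^p t\,\partial^{\beta_i}_\zeta[\zeta]$ with $\sum|\beta_i|=|\alpha|$, and the standard bounds $\partial^\beta_\zeta[\zeta]=O([\zeta]^{1-|\beta|})$ together with boundedness of $s\mapsto s^p\omega^{(p)}(s)$ give $O([\zeta]^{p-|\alpha|})[\zeta]^{-p}(t[\zeta])^p\omega^{(p)}(t[\zeta])=O([\zeta]^{-|\alpha|})$; the same holds for $f_3$, while $f_2$ carries no $\zeta$-dependence. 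Third, combining these, each term produced by Leibniz is a bounded function of $(t[\zeta],t^\prime[\zeta],\log(t/t^\prime))$ times a factor of size $O([\zeta]^{-|\alpha|})$; multiplying by $[\zeta]^{|\alpha|}$ and taking the supremum over $(t,t^\prime,\zeta)$ yields \eqref{1-16}, and quantifying this over all $k,k^\prime,\alpha$ is precisely the assertion that $[\zeta]^{|\alpha|}\partial^\alpha_\zeta f(\cdot,\cdot,\zeta)$ is bounded in $\zeta$ with values in $C^\infty(\R_+\times\R_+)_B$.

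A tidier packaging of the second half, which I would at least indicate, is to view $f(\cdot,\cdot,\zeta)$ as an element of $\tilde H:=C^\infty(\R_+\times\R_+)_B$ carrying the group action $(\kappa_\lambda\varphi)(t,t^\prime):=\varphi(\lambda t,\lambda t^\prime)$, which preserves every semi-norm of $\tilde H$ since $t\partial_t$ and $t^\prime\partial_{t^\prime}$ commute with $\kappa_\lambda$. Then \eqref{1-15} says exactly $f(\cdot,\cdot,\lambda\zeta)=\kappa_\lambda f(\cdot,\cdot,\zeta)$ for $\lambda\ge1$, $|\zeta|\ge\textup{const}$, i.e. twisted homogeneity of order $0$; since the preliminary remark also shows that $f$ is $C^\infty$ in $\zeta$ with values in $\tilde H$, the Fr\'echet-valued version of Lemma \ref{1-80} yields $f\in S^0_{\textup{cl}}(\R^q;\C,\tilde H)$, and the defining symbolic estimates of that class reduce to \eqref{1-16} because $\kappa$ preserves the semi-norms and $[\zeta]\sim\langle\zeta\rangle$. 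I expect the only genuinely substantive point to be the preliminary support remark: if $\omega\prec\omega^\prime$ did not keep $t/t^\prime$ bounded away from $1$ on $\textup{supp}\,f$ uniformly in $\zeta$, the singularity of $(\log(t/t^\prime))^{-N}$ at $t=t^\prime$ would destroy both \eqref{1-16} and the membership $f(\cdot,\cdot,\zeta)\in C^\infty(\R_+\times\R_+)_B$; granting it, the rest is bookkeeping with the scale invariance of the Fuchs-type derivatives and the symbol estimate for $\omega(t[\zeta])$.
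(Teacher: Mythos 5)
The paper itself gives no proof of this lemma (it is quoted from \cite{Gil2}), and your argument is exactly the standard one behind it: uniformity in $\zeta$ because the ratio $t/t'=(t[\zeta])/(t'[\zeta])$ is scale-free, $[\lambda\zeta]=\lambda[\zeta]$ for $\lambda\ge1$, $|\zeta|\ge c$ giving \eqref{1-15}, invariance of $t\partial_t,t'\partial_{t'}$ under the dilation $t\mapsto t[\zeta]$, and the symbol estimate $|\partial_\zeta^\alpha\omega(t[\zeta])|\le c_\alpha[\zeta]^{-|\alpha|}$ giving \eqref{1-16}; this is correct. The one point to state more carefully is your ``preliminary remark'': with the paper's literal convention ($\omega\prec\omega'$ only means $\omega'\equiv1$ on $\textup{supp}\,\omega$, and cut-offs need not be compactly supported) the supports of $\omega$ and $1-\omega'$ may touch, so the uniform bound $t/t'\le c<1$ on $\textup{supp}\,f$ should be justified by the standard tacit choice $\textup{supp}\,\omega\subset[0,c_0]$, $\omega'\equiv1$ on $[0,c_1]$ with $c_0<c_1$ (equivalently, a positive distance between $\textup{supp}\,\omega$ and $\textup{supp}(1-\omega')$), which is what is meant here; granting that, the Leibniz bookkeeping and the twisted-homogeneity packaging via Lemma \ref{1-80} are both fine.
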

\begin{Prop}\label{1-6}
Let $\tilde{h}(t,v,z,\tilde{\zeta})\in
C^\infty(\overline{\R}_+\times \Sigma,
M_{\mathcal{O}}^m(B,\boldsymbol{g};\R^{d}))$ for $\boldsymbol{g}=(\gamma_1,\gamma_1-\mu,\Theta)$ and
$h(t,v,w,\zeta)=\tilde{h}(t,v,w,t\zeta)$. Let us set
$$g_0(v,\zeta):=\omega_\zeta\textup{op}_M^{\gamma_2-\frac{b}{2}}(h)(v,\zeta)(1-\omega_\zeta^\prime),\,
g_1(v,\zeta):=(1-\omega_\zeta^\prime)\textup{op}_M^{\gamma_2-\frac{b}{2}}(h)(v,\zeta)\omega_\zeta,$$ for cut-off functions
$\omega$, $\omega^\prime$ with $\omega \prec \omega^\prime$. Then
we have $g_0(v,\zeta),g_1(v,\zeta)\in R^0_G(\Sigma\times
\R^d)_{\mathcal{O}}$.
\end{Prop}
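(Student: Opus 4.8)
\emph{Plan of proof.} By linearity it suffices to treat $g_0$: the argument for $g_1$ is the same with the roles of the two cut-offs interchanged, and, up to the Mellin-adjoint operation (under which one again obtains a holomorphic parameter-dependent Mellin symbol of the same class over $B$, with adjoint weight data), $g_1$ is the formal adjoint of $g_0$, so the two statements are equivalent. Recall that $g_0\in R^0_G(\Sigma\times\R^d)_{\mathcal O}$ means that $g_0$ and $g_0^*$ belong to $S^0_{\textup{cl}}(\Sigma\times\R^d;\mathcal K^{s,(\gamma_1,\gamma_2);e}(B^\wedge),\mathcal K^{s',(\gamma_1',\gamma_2');e'}(B^\wedge))$ for all the indicated weights and orders. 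First I would fix an auxiliary cut-off $\omega''$ with $\omega\prec\omega''\prec\omega'$ and write $\omega_\zeta=\omega_\zeta\omega''_\zeta$, $1-\omega'_\zeta=(1-\omega'_\zeta)(1-\omega''_\zeta)$; this makes explicit that on the support of the distributional kernel of $g_0(v,\zeta)$ one has $t[\zeta]\le c_1<c_2\le t'[\zeta]$, hence $t/t'\le c_1/c_2<1$ — a genuine kernel gap — and that on this gap $|t\zeta|=t|\zeta|\le c_1|\zeta|/[\zeta]$ stays bounded, so the corner degeneracy $\tilde h(t,v,w,t\zeta)$ is evaluated only in a bounded range of its last argument.

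The key step is to exploit the holomorphy of $h$. Writing $\textup{op}_{M_t}^{\gamma_2-b/2}(h)(v,\zeta)$ through its Mellin kernel $\int_{\Gamma_{(b+1)/2-\gamma_2}}(t/t')^{-w}h(t,v,w,\zeta)\,\dbar w$ and integrating by parts $N$ times in $w$ — admissible because $h$ is entire with the uniform $L^m(B,\boldsymbol g;\Gamma_\delta\times\R^d)$ control on each line, so no boundary contributions occur — one obtains that $g_0(v,\zeta)$ is the integral operator with kernel
\[
f_N(t,t',\zeta)\,\mathcal K_N(t,t',v,\zeta),\qquad f_N(t,t',\zeta):=\omega_\zeta(t)\,(\log\tfrac{t}{t'})^{-N}\,(1-\omega'_\zeta(t')),
\]
where $\mathcal K_N$ is the Mellin kernel of $\textup{op}_{M_t}^{\gamma_2-b/2}(\partial_w^N h)(v,\zeta)$ and $\partial_w^N h$ takes values in $L^{m-N}(B,\boldsymbol g;\R^d)$ — the order on the now singular cross-section $B$ drops by $N$, since $w$ is part of the joint covariable. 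The factor $f_N$ is precisely the function of Lemma \ref{1-14}, which supplies the homogeneity relation \eqref{1-15} for $\lambda\ge1$, $|\zeta|$ large, and the symbol estimates \eqref{1-16}; by Lemma \ref{1-80} this exhibits $f_N$ as a classical operator-valued symbol of order $0$ for the group action $\psi_\lambda$ on the corner spaces. Holomorphy of $h$ also lets us shift the weight line $\Gamma_{(b+1)/2-\gamma_2}$ freely, which accounts for the freedom in $\gamma_2,\gamma_2'$, the subscript $\mathcal O$ (trivial asymptotic types) being automatic; the $\gamma_1$-behaviour is governed by $\boldsymbol g=(\gamma_1,\gamma_1-\mu,\Theta)$.

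It then remains to verify the twisted symbol estimates for $g_0$ and $g_0^*$, i.e.\ to bound $\|\psi_{\langle\zeta\rangle}^{-1}D_v^\beta D_\zeta^\alpha g_0(v,\zeta)\psi_{\langle\zeta\rangle}\|$ in $\mathcal L(\mathcal K^{s,(\gamma_1,\gamma_2);e}(B^\wedge),\mathcal K^{s',(\gamma_1',\gamma_2');e'}(B^\wedge))$ by $c\langle\zeta\rangle^{-|\beta|}$. Here $\zeta$-derivatives either hit $f_N$, gaining a factor $[\zeta]^{-1}$ per hit by \eqref{1-16}, or hit $\tilde h(\cdot,t\zeta)$, producing $t\,(\partial_{\tilde\zeta}\tilde h)(\cdot,t\zeta)$, which on the gap contributes $\lesssim[\zeta]^{-1}$ together with a symbol of order one lower; so the $\langle\zeta\rangle$-powers bookkeep exactly as in the smooth-edge case of \cite{Gil2}. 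Conjugation by $\psi_{\langle\zeta\rangle}$ rescales $t$ to $t/\langle\zeta\rangle$ and turns the degeneracy into $\tilde h(\cdot,t\zeta/\langle\zeta\rangle)$ with bounded parameter, and the operator norm on $B$ of the resulting parameter-dependent edge operator of order $m-N$ is read off Theorem \ref{1-8}. Letting $N\to\infty$ makes the composite map into $\mathcal K^{s',(\gamma_1',\gamma_2');e'}(B^\wedge)$ for every $s'$ and yields arbitrarily negative powers of $\langle\zeta\rangle$, so that, together with the order-$0$ contribution of $f_N$, we obtain $g_0\in\bigcap_k S^{-k}_{\textup{cl}}(\Sigma\times\R^d;\cdots)\subset S^0_{\textup{cl}}(\Sigma\times\R^d;\cdots)$ for all admissible indices; the identical computation applied to the adjoint Mellin symbol settles $g_0^*$. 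Hence $g_0\in R^0_G(\Sigma\times\R^d)_{\mathcal O}$, and $g_1$ follows in the same way.

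The main obstacle is the last step. In contrast with the smooth-edge setting of \cite{Gil2}, the cross-section $B$ carries a first order edge, so the left-over operators $\textup{op}_{M_t}^{\gamma_2-b/2}(\partial_w^N h)(v,\zeta)$ take values in the edge algebra over $B$ rather than in a smoothing class, and controlling their mapping norms between the double-weight corner spaces $\mathcal K^{s,(\gamma_1,\gamma_2);e}(B^\wedge)$ — uniformly in $\zeta$ and compatibly with $\psi_\lambda$ — is precisely what forces the use of the norm-growth estimate Theorem \ref{1-8}. Disentangling the three nested layers (the $t$-axis Mellin quantisation, the dependence through $t\zeta$, and the edge operator on $B$ acting on the $\gamma_1$-layer) while checking the full set of twisted symbolic estimates is the technically heaviest part.
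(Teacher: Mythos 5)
Your opening moves coincide with the paper's and are correct: the kernel gap $t/t'\le c_1/c_2<1$ forced by $\omega\prec\omega'$, the integration by parts in the holomorphic Mellin covariable $w$ that rewrites $g_0(\zeta)$ as $\textup{op}_M^{\gamma_2-\frac{b}{2}}(f\,\partial_w^Nh)(\zeta)$ with the function $f$ of Lemma \ref{1-14}, and the observation that $\partial_w^Nh$ lowers the order on $B$ by $N$, which is what makes every target smoothness $s'$ reachable. The first genuine gap is your mechanism for classicality. Increasing $N$ does \emph{not} produce negative powers of $\langle\zeta\rangle$: on the support of the kernel $t[\zeta]\le c_1$, so the argument $t\zeta$ of $\tilde h$ stays bounded, $f_N$ is uniformly of order $0$, and after conjugation by the group action the fibre operator on $B$ is merely $O(1)$ by Theorem \ref{1-8}; hence your conclusion $g_0\in\bigcap_kS^{-k}_{\textup{cl}}$ is false in general. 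Indeed $g_0$ has a nontrivial principal twisted-homogeneous component $\omega(t|\zeta|)\textup{op}_M^{\gamma_2-\frac{b}{2}}(h_0)(\zeta)(1-\omega'(t|\zeta|))$ of degree $0$ (it contributes to $\sigma_2$ in Theorem \ref{mainth}); were your claim true, the proposition would assert membership in $R^{-\infty}_G$ rather than $R^0_G$. The correct source of the asymptotic expansion is a Taylor expansion of $\tilde h(t,w,\tilde\zeta)$ in $t$ at $t=0$: each factor $t^j$, conjugated by $\kappa_{[\zeta]}$ and combined with the exact homogeneity \eqref{1-15} of $f$, yields a twisted-homogeneous component of degree $-j$, while the Taylor remainder $t^Ja_J(\zeta)$ is estimated directly to have order $-J$. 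This step is entirely absent from your proposal.

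The second gap is that you never pass from the Mellin-type spaces $\mathcal H^{s,(\gamma_1,\gamma_2)}(B^\wedge)$, on which $\textup{op}_M^{\gamma_2-\frac{b}{2}}$ acts naturally, to the spaces $\mathcal K^{s,(\gamma_1,\gamma_2);e}(B^\wedge)$ with \emph{arbitrary} exit weights $e,e'$ demanded by the definition of $R^0_G(\Sigma\times\R^d)_{\mathcal O}$. Theorem \ref{1-8} only controls norms on the cross-section $B$; the $t$-direction requires the growth-comparing symbols of Theorem \ref{1-18}, i.e.\ the sandwich $g_0(\zeta)=\omega_{2,\zeta}t^L\,\textup{op}_M^{\gamma_2'-\frac{b}{2}}(fT^L\partial_w^Nh)\,(1-\omega_{1,\zeta})t^{-L}$ with $L=L(e)$ large, where $(1-\omega_{1,\zeta})t^{-L}$ converts an arbitrary incoming exit weight into a Mellin weight (Theorem \ref{1-18}\,(ii)) and $\omega_{2,\zeta}t^L$ produces an arbitrary exit weight in the image (Theorem \ref{1-18}\,(iii)). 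Your remark that holomorphy permits shifting the line $\Gamma_{(b+1)/2-\gamma_2}$ does not by itself account for the freedom in $\gamma_2,\gamma_2'$; in the paper the weight change is justified by the infinite flatness in $t'$ enforced by the kernel factors. Finally, reducing $g_1$ to $g_0$ by ``interchanging the cut-offs'' conceals real work: $g_1$ requires a right Mellin symbol $h_R(t',w,\zeta)$ and the opposite halves (i), (iv) of Theorem \ref{1-18}; the adjoint symmetry you invoke is what disposes of $g_0^*$ and $g_1^*$ once both $g_0$ and $g_1$ have been treated, not of $g_1$ itself.
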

\begin{proof}
For simplicity we consider the $v$-independent case. First we show
that the relation
\begin{equation}\label{1-17}
g_0(\zeta)\in
S_\textup{cl}^0(\R^d;\mathcal{K}^{s,(\gamma_1,\gamma_2);g}(B^\wedge),
\mathcal{K}^{s^\prime,(\gamma_1-\mu,\gamma_2^\prime);g^\prime}(B^\wedge))
\end{equation} for arbitrary $s,s^\prime,\gamma_2,\gamma_2^\prime,g,g^\prime\in
\R$. Integration by parts gives us
$g_0(\zeta)=\textup{op}_M^{\gamma_2-\frac{b}{2}}(h_N)(\zeta)$
for every $N\in \N$, for
$h_N(t,t^\prime,w,\zeta)=f(t,t^\prime,\zeta)\partial_w^Nh(t,w,\zeta)$,
where $f(t,t^\prime,\zeta)$ is the function from Lemma \ref{1-14}.
We choose cut-off functions $\omega_1$, $\omega_2$ such that
$\omega_1\prec \omega$, $\omega\prec \omega_2$. Moreover, we
choose $L\geq 0$ so large as in Proposition \ref{1-18} (ii). Since
$h$ is holomorphic in $w$ we obtain
$$g_0(\zeta)=\omega_{2,\zeta}t^L\textup{op}_M^{\gamma^\prime_2-\frac{b}{2}}(f(t,t^\prime,\zeta)T^L\partial^N_wh(t,w,\zeta))
(1-\omega_{1,\zeta})t^{-L}$$ as an operator on
$C_0^\infty(\R_+,H^{\infty,\gamma_1}(B))$. Note that
because of the infinite flatness in $t^\prime$ on the right we
could replace $\textup{op}_{M_t}^{\gamma_2-\frac{b}{2}}$ by
$\textup{op}_{M_t}^{\gamma^\prime_2-\frac{b}{2}}$; then from the
weight $\gamma_2$ we obtain $\gamma_2^\prime$ under the action;
therefore, from now on we argue with $\gamma_2^\prime$ both for the
original function and the one in the image. By virtue of
Proposition \ref{1-18} (ii), (iii) we obtain the relation
\eqref{1-17} if we show that
\begin{equation}\label{1-19}
\begin{split} a(\zeta):=\textup{op}_M^{\gamma_2^\prime-\frac{b}{2}}&(f(t,t^\prime,\zeta)T^L\partial^N_wh(t,w,\zeta))\\ &\in
S^0_{\textup{cl}}(\R^d;\mathcal{H}^{s,(\gamma_1,\gamma_2^\prime)}(B^\wedge),\mathcal{H}^{s-\mu+N,(\gamma_1-\mu+N,\gamma_2^\prime)}(B^\wedge))
\end{split}\end{equation} since $N$ can be chosen so large that $s-\mu+N\geq
s^\prime$. In other words, it remains to verify \eqref{1-19}. To
simplify notation from now on in the proof we replace
$T^L\partial_w^N h$ again by $h$, here associated with
$\tilde{h}(t,w,\tilde{\zeta})\in
C^\infty(\overline{\R}_+,M_\mathcal{O}^{\mu-N}(B,(\gamma_1,\gamma_1-\mu+N,\Theta);\R^d_{\tilde{\zeta}}))$.
Applying the Taylor formula we obtain
$$\tilde{h}(t,w,\tilde{\zeta})=\sum^{J-1}_{j=0}t^j\tilde{h}_j(w,\tilde{\zeta})+t^J\tilde{h}_{J}(t,w,\tilde{\zeta})$$
where $\tilde{h}_j(w,\tilde{\zeta})\in
M_\mathcal{O}^{\mu-N}(B,(\gamma_1,\gamma_1-\mu+N,\Theta);\R^d_{\tilde{\zeta}})$
and $\tilde{h}_{j}(t,w,\tilde{\zeta})\in
M_\mathcal{O}^{\mu-N}(B,(\gamma_1,\gamma_1-\mu+N,\Theta);\R^d_{\tilde{\zeta}})$.
Let $h_j(t,w,\zeta):=\tilde{h}_j(w,t\zeta)$,
$h_J(t,w,\zeta):=\tilde{h}_J(t,w,t\zeta)$. For
$a_j(\zeta):=\textup{op}_M^{\gamma_2^\prime-\frac{b}{2}}(f(t,t^\prime,\zeta)h_j(t,w,\zeta))$
it follows that $a(\zeta)=\sum_{j=0}^{J-1} t^ja_j(\zeta)$. Lemma
\ref{1-14} yields $t^j f(t,t^\prime,\zeta)h_j(t,w,\zeta)\in
C^\infty(\R^d,C^\infty(\R_+\times\R_+,M_\mathcal{O}^{\mu-N}(B,(\gamma_1,\gamma_1-\mu+N,\Theta)))_{\textup{B}})$
for every $j$ (subscript $\rm{B}$ means bounded in
$(t,t^\prime)\in \R_+\times \R_+$ of all derivatives
$(t\partial_t)^l(t^\prime\partial_{t^\prime})^{l^\prime}$ of
the corresponding function). This gives us $t^j a_j(\zeta)\in
C^\infty(\R^d_\zeta,\mathcal{L}(\mathcal{H}^{s,(\gamma_1,\gamma^\prime_2)}(B^\wedge),
\mathcal{H}^{s-\mu+N,(\gamma_1-\mu+N,\gamma^\prime_2)}(B^\wedge)))$.
Moreover, the relation \eqref{1-15} shows $t^j a_j(\zeta)$ for
$0\leq j<J$ is homogenous of order $-j$ for large $\lvert
\zeta\rvert$. We therefore obtain that $t^ja_j(\zeta)\in
S^{-j}_{\textup{cl}}(\R^d;\mathcal{H}^{s,(\gamma_1,\gamma_2^\prime)}(B^\wedge),
\mathcal{H}^{s-\mu+N,(\gamma_1-\mu+N,\gamma_2^\prime)}(B^\wedge))$.
To verify $t^Ja_J(\zeta)=a(\zeta)-\sum_{j=0}^{J-1}t^ja_j(\zeta)\in
S^{-J}(\R_\zeta^d;\mathcal{H}^{s,(\gamma_1,\gamma_2^\prime)}(B^\wedge),
\mathcal{H}^{s-\mu+N,(\gamma_1-\mu+N,\gamma_2^\prime)}(B^\wedge))$,
i.e.
$$\Vert\kappa_{[\zeta]}^{-1}\{D_\zeta^\alpha t^Ja_J(\zeta)\}\kappa_{[\zeta]}\Vert_{\mathcal{L}(\mathcal{H}^{s,(\gamma_1,\gamma_2^\prime)}(B^\wedge),
\mathcal{H}^{s-\mu+N,(\gamma_1-\mu+N,\gamma_2^\prime)}(B^\wedge))}\leq
c[\zeta]^{-J-\vert\alpha\vert}$$ for all $\zeta\in \R^d$,
$\alpha\in \N^d$, we consider
$$\partial_\zeta^\alpha[f(t,t^\prime,\zeta)h_J(t,z,\zeta)]=\sum_{\beta\leq\alpha}
\left(\begin{array}{l}\alpha\\
\beta\end{array}\right)\partial_\zeta^\beta
f(t,t^\prime,\zeta)t^{\vert\alpha-\beta\vert}\partial_\zeta^{\alpha-\beta}\tilde{h}_J(t,z,\zeta)$$
with differentiation referring to the topology of
$C^\infty(\R^d_\zeta,C^\infty(\R_+\times\R_+,M_{\mathcal{O}}^{\mu-N}(B,(\gamma_1,\gamma_1-\mu+N,\Theta)))_{\textup{B}})$.
This gives us
$\kappa^{-1}_{[\zeta]}\partial^\alpha_\zeta(t^Ja_J(\zeta))\kappa_{[\zeta]}[\zeta]^{\vert\alpha\vert+J}
=\textup{op}_{M_t}^{\gamma_2^\prime-\frac{b}{2}}(m_\alpha(t,t^\prime,w,\zeta))$
for $$m_\alpha(t,t^\prime,w,\zeta)=\sum_{\beta\leq\alpha}
\left(\begin{array}{l}\alpha\\
\beta
\end{array}\right)
\partial_\zeta^\beta f(\frac{t}{[\zeta]},\frac{t^\prime}{[\zeta]},\zeta)[\zeta]^\beta
t^{J+\vert\alpha-\beta\vert}(\partial_\zeta^{\alpha-\beta}\tilde{h}_J)(\frac{t}{[\zeta]},w,\frac{t}{[\zeta]}\zeta).$$
Because of Lemma \ref{1-14} the function $m_\alpha$ is bounded in
$\zeta$ with values in $C^\infty(\R_+\times
\R_+,M_{\mathcal{O}}^{\mu-N}(B,(\gamma_1,\gamma_1-\mu+N,\Theta)))_B$
and of compact support with respect to $t$. This yields
$$\textup{sup}_{\zeta\in \R^d}\Vert\textup{op}_M^{\gamma_2^\prime-\frac{b}{2}}(m_\alpha)(\zeta)\Vert_{\mathcal{L}
(\mathcal{H}^{s,(\gamma_1,\gamma_2^\prime)}(B^\wedge),\mathcal{H}^{s-\mu+N,(\gamma_1-\mu+N,\gamma_2^\prime)}(B^\wedge))}<\infty.$$
Hence we obtain altogether the relation \eqref{1-17}. For the
proof the
\begin{equation}\label{1-20}
g_1(\zeta)\in
S_{\textup{cl}}^0(\R^d;\mathcal{K}^{s,(\gamma_1,\gamma_2);e}(B^\wedge),
\mathcal{K}^{s^\prime,(\gamma_1^\prime,\gamma^\prime_2);e^\prime}(B^\wedge))
\end{equation} for all $s,s^\prime,\gamma_1,\gamma_1^\prime,\gamma_2,\gamma_2^\prime,e,e^\prime \in
\R$ we express $\textup{op}_M^{\gamma_2-\frac{b}{2}}(h)$ in the
form
$\textup{op}_M^{\gamma_2-\frac{b}{2}}(h)=\textup{op}_M^{\gamma_2-\frac{b}{2}}(h_R)$
for a right symbol
$h_R(t^\prime,w,\zeta)=\tilde{h}_R(t^\prime,w,t^\prime\zeta)$,
$\tilde{h}_R(t^\prime,w,\tilde{\zeta})\in
C^\infty(\overline{\R}_+,M_{\mathcal{O}}^\mu(B,\boldsymbol{g};\R^d)).$
Then we proceed in an analogous manner as before, i.e. choose an
$L$ such that $\frac{b+1}{2}-\gamma_2+L\geq g^\prime$, and write
$$g_1(\zeta)=(1-\omega_\zeta ^\prime)t^{-L}\textup{op}_M^{\gamma_2-\frac{b}{2}}
((-1)^N f(t,t^\prime,\zeta))T^L\partial_w^N
h_R(t^\prime,w,\zeta)\omega_\zeta t^L.$$ By means of
Proposition \ref{1-18} (i), (iv) and Lemma \ref{1-14}, together
with a Taylor expansion of $\tilde{h}_R$ in $t^\prime$ at
$t^\prime=0$ we can verify the relation \eqref{1-20}. It remains to
observe that for the formal adjoints $g_0^*$, $g_1^*$ we obtain the
same, since $g_0^*$ is analogous as $g_1$ and $g_1^*$ as
$g_0$.
\end{proof}
\begin{Prop}\label{1-7}
Let $p(t,v,\tau,\zeta)$ and $h(t,v,w,\zeta)$ be as in Theorem
\textup{\ref{1-5}}. Then for every two cut-off functions $\epsilon$,
$\epsilon^\prime$ and excision functions $\chi$, $\chi^\prime$ with
$\chi\prec \chi^\prime$ we have
$$g(v,\zeta):=\epsilon\chi_\zeta\{\textup{Op}_t(p)(v,\zeta)-\textup{op}_M^{\gamma_2-\frac{b}{2}}(h)(v,\zeta)\}
\chi^\prime_\zeta\epsilon^\prime\in R_G^0(\Sigma\times
\R^d)_{\mathcal{O}}.$$\end{Prop}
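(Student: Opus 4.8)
\emph{Plan.} The plan is to run the argument parallel to the proof of Proposition \ref{1-6}; the only new feature is that the cut-off mismatch $\chi_\zeta\{\cdot\}\chi_\zeta^\prime$ is now broken by the Fourier--Mellin difference rather than by an $\omega$-mismatch. First I would treat the case independent of $v$ (the $v$-dependence being a harmless parameter) and localise near the edge of $B$. By the sharpened Mellin quantisation identity \eqref{1-9}, $\textup{Op}_t(p)(\zeta)-\textup{op}_M^{\gamma_2-\frac{b}{2}}(h)(\zeta)=\textup{Op}_t\big((1-\psi(t^\prime/t))p\big)(\zeta)$, so that
$$g(\zeta)=\epsilon\,\chi_\zeta\,\textup{Op}_t\big((1-\psi(t^\prime/t))p\big)(\zeta)\,\chi_\zeta^\prime\,\epsilon^\prime$$
is an operator whose distributional kernel vanishes in a neighbourhood of the diagonal $t=t^\prime$ in $\R_+\times\R_+$. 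Rewriting this Fourier operator as a Mellin pseudo-differential operator in $t$ and integrating by parts $N$ times in the Mellin covariable $w$ — admissible because $\log(t/t^\prime)$ stays away from $0$ on $\textup{supp}(1-\psi(t^\prime/t))$ and $h$ is holomorphic in $w$, which moreover lets one slide the weight line from $\gamma_2$ to an arbitrary $\gamma_2^\prime$ — one obtains, for every $N\in\N$,
$$g(\zeta)=\epsilon\,\chi_\zeta\,\textup{op}_{M_t}^{\gamma_2^\prime-\frac{b}{2}}\big(f_N\,\partial_w^Nh\big)(\zeta)\,\chi_\zeta^\prime\,\epsilon^\prime,\qquad f_N(t,t^\prime):=\big(1-\psi(t^\prime/t)\big)\big(\log(t/t^\prime)\big)^{-N}.$$
Here $f_N$ is a bounded $C^\infty$ function of $t/t^\prime$, invariant under simultaneous dilation and with all $(t\partial_t)^k(t^\prime\partial_{t^\prime})^{k^\prime}$-derivatives bounded, so it plays exactly the role of the function $f$ of Lemma \ref{1-14}.

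Next I would insert the factors $\omega(t[\zeta])t^{\pm L}$ and $(1-\omega(t[\zeta]))t^{-L}$ for $L\geq 0$ large and invoke the corner analogue of Theorem \ref{1-66} (stated in the excerpt as Proposition \ref{1-18}) to pass between the weighted corner spaces $\mathcal K^{s,(\gamma_1,\gamma_2);e}(B^\wedge)$, $\mathcal H^{s,(\gamma_1,\gamma_2)}(B^\wedge)$ and $H^{s,\gamma_1}_{\textup{cone}}(B^\asymp)$. Just as in Proposition \ref{1-6}, this reduces the assertion $g(\zeta)\in S^0_{\textup{cl}}\big(\R^d;\mathcal K^{s,(\gamma_1,\gamma_2);e}(B^\wedge),\mathcal K^{s^\prime,(\gamma_1^\prime,\gamma_2^\prime);e^\prime}(B^\wedge)\big)$, for all index values, to proving that
$$a(\zeta):=\textup{op}_{M_t}^{\gamma_2^\prime-\frac{b}{2}}\big(f_N\,T^L\partial_w^Nh\big)(\zeta)\in S^0_{\textup{cl}}\big(\R^d;\mathcal H^{s,(\gamma_1,\gamma_2^\prime)}(B^\wedge),\mathcal H^{s-\mu+N,(\gamma_1-\mu+N,\gamma_2^\prime)}(B^\wedge)\big)$$
with $N$ and $\gamma_2^\prime$ arbitrarily large; here $\partial_w^N$ lowers both the order and the weight jump of $h$ by $N$, and $T^L$ is the usual weight shift of Mellin symbols. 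To prove this I would expand $\tilde h(t,w,\tilde\zeta)=\sum_{j=0}^{J-1}t^j\tilde h_j(w,\tilde\zeta)+t^J\tilde h_J(t,w,\tilde\zeta)$ at $t=0$, with $\tilde h_j,\tilde h_J\in M_{\mathcal O}^{\mu-N}(B,(\gamma_1,\gamma_1-\mu+N,\Theta);\R^d)$, whence $a(\zeta)=\sum_{j=0}^{J-1}t^ja_j(\zeta)+t^Ja_J(\zeta)$. Using the dilation-invariance of $f_N$ together with the degeneracy $h_j(t,w,\zeta)=\tilde h_j(w,t\zeta)$ one checks, exactly as in the passage around \eqref{1-15}, that each $t^ja_j(\zeta)$ is twisted-homogeneous of degree $-j$ for large $|\zeta|$ and defines a continuous family $\mathcal H^{s,(\gamma_1,\gamma_2^\prime)}(B^\wedge)\to\mathcal H^{s-\mu+N,(\gamma_1-\mu+N,\gamma_2^\prime)}(B^\wedge)$, hence lies in $S^{-j}_{\textup{cl}}$ by Lemma \ref{1-80}; the remainder $t^Ja_J(\zeta)=a(\zeta)-\sum_{j<J}t^ja_j(\zeta)\in S^{-J}$ is obtained by differentiating $f_N\,T^L\partial_w^N\tilde h_J$ in $\zeta$ and using the uniform boundedness of $f_N$ and of its $\zeta$-rescalings, just as at the end of the proof of Proposition \ref{1-6}.

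It remains to treat the formal adjoint. Since $\textup{Op}_t(p)^*$ and $\textup{op}_M^{\gamma_2-\frac{b}{2}}(h)^*$ are again a Fourier and a Mellin quantisation of the type in Theorem \ref{1-5}, with the weight data reflected, $g^*(\zeta)$ has the same structure as $g(\zeta)$ with the pairs of cut-offs $\epsilon,\chi$ and $\epsilon^\prime,\chi^\prime$ interchanged; the preceding argument therefore applies verbatim, and we conclude $g\in R^0_G(\Sigma\times\R^d)_{\mathcal O}$. The step I expect to be the main obstacle is the bookkeeping of the twisted symbol estimates in the corner (double-weight) setting: carrying the group action through while simultaneously moving between the three flavours of corner weighted spaces via Proposition \ref{1-18}, and verifying that the off-diagonal cut-off $1-\psi(t^\prime/t)$ together with the excisions $\chi_\zeta,\chi_\zeta^\prime$ is compatible with the dilation, so that the Taylor-in-$t$ components come out genuinely homogeneous. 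The extra wrinkle relative to the edge case of Proposition \ref{1-6} is that $M_{\mathcal O}^m(B,\boldsymbol{g};\R^l)$ is itself an edge-operator-valued Mellin symbol class over the manifold $B$ with edge, but this is absorbed into the parameter-dependent edge calculus of Section 1 and does not change the structure of the argument.
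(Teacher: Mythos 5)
There is a genuine gap at the central step of your argument. You claim that, after invoking \eqref{1-9}, the Fourier operator $\textup{Op}_t\big((1-\psi(t'/t))p\big)(\zeta)$ can be rewritten as $\textup{op}_{M_t}^{\gamma_2'-\frac{b}{2}}\big(f_N\,\partial_w^Nh\big)(\zeta)$ with $f_N=(1-\psi(t'/t))(\log(t/t'))^{-N}$. That identity is false. Writing $K_p$ and $K_h$ for the Schwartz kernels of $\textup{Op}_t(p)$ and $\textup{op}_M^{\gamma_2-\frac{b}{2}}(h)$, relation \eqref{1-9} says precisely $K_h=\psi(t'/t)K_p$, so the kernel of $g$ is $(1-\psi(t'/t))K_p$; on the other hand, undoing the $w$-integration by parts shows that $\textup{op}_M(f_N\partial_w^Nh)$ has kernel $(1-\psi(t'/t))K_h=(1-\psi(t'/t))\psi(t'/t)K_p$, a different operator. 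The mechanism of Lemma \ref{1-14} and Proposition \ref{1-6} (divide by $\log(t/t')$ and integrate by parts in the Mellin covariable) is only available when the operator in hand genuinely \emph{is} a Mellin quantisation of $h$ sandwiched between cut-offs with disjoint supports; here the off-diagonality sits in the Fourier amplitude, and the correct device is integration by parts in the Fourier covariable, $(t-t')^{-N}D_\tau^Ne^{i(t-t')\tau}=e^{i(t-t')\tau}$, combined with the edge-degeneracy $D_\tau^Np=t^N(D_{\tilde\tau}^N\tilde p)(t,t\tau,t\zeta)$, which makes $(t'-t)^{-N}t^N$ bounded on $\textup{supp}(1-\psi(t'/t))$. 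Together with the norm-growth estimate of Theorem \ref{1-8} this shows that the kernel of $g(\zeta)$ is a compactly supported Schwartz function of $(t,t')$ with values in arbitrarily smoothing edge operators on $B$ — and everything downstream in your plan (the Taylor expansion of $\tilde h$, the homogeneity of the pieces $t^ja_j$, the appeal to Proposition \ref{1-18}) rests on the invalid Mellin representation, so it does not go through as written.

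A second, smaller but still essential point concerns the classical asymptotic expansion. In Proposition \ref{1-6} all cut-offs scale with $\zeta$, so exact twisted homogeneity survives; here the factors $\epsilon(t)$, $\epsilon'(t')$ are \emph{fixed} and destroy it, as does the $t$-dependence of $\tilde p$. Taylor-expanding $\tilde h$ at $t=0$ does not address this. One needs instead to expand the full amplitude $\tilde q(t,t',\tilde\tau,\tilde\zeta)=\epsilon(t)\epsilon'(t')\tilde p(t,\tilde\tau,\tilde\zeta)$ into a convergent sum $\sum_jc_j\varphi_j(t)\tilde p_j(\tilde\tau,\tilde\zeta)\epsilon'(t')$ and then Taylor-expand $\varphi_j(t)$ and $\epsilon'(t')$ at $0$; the powers $t^l$, $(t')^{L+1}$ so produced are what generate the genuinely twisted-homogeneous components of orders $-l$, while the remainders carrying $\varphi_{j,L}$, $\epsilon'_L$ are only general (non-classical) symbols of order $-(l+L+1)$, which is harmless since $L$ is arbitrary. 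Your remark on the formal adjoint is fine once the kernel is known to be a two-sided Schwartz function.
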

\begin{proof}
For simplicity we consider the $v$-independent case. According to
\eqref{1-9} we have
\begin{equation}\label{1-10}
g(\zeta)u(t)=(1-\omega_\zeta(t))\int\!\!\int
e^{i(t-t^\prime)\tau}(1-\psi(\frac{t^\prime}{t}))\tilde{q}(t,t^\prime,t\tau,t\zeta)(1-\omega_\zeta^\prime(t^\prime))u(t^\prime)dtd\tau\end{equation}
 for
$\tilde{q}(t,t^\prime,\tilde{\tau},\tilde{\zeta}):=\epsilon(t)\epsilon^\prime(t^\prime)\tilde{p}(t,\tilde{\tau},\tilde{\zeta})$,
and cut-off functions $\omega$, $\omega^\prime$ such that
$\chi=1-\omega$, $\chi^\prime=1-\omega^\prime$. First we show that
the (operator-valued) integral kernel of \eqref{1-10} can be
interpreted as a function in
\begin{equation}\label{1-11}
C^\infty(\R_\zeta^d,\mathcal{S}(\R\times\R,L^{-\infty}(B,(\gamma_1,\gamma_1-\mu+N,\Theta))))
\end{equation}\textup{(}here we mean the integral operator on $\R_+\times \R_+$, and functions with compact support
 in $t,t^\prime\in \R_+\times \R_+$ are identified with functions on $\R\times \R$ by extending them by zero in $t\leq 0$ and $t^\prime \leq 0$,
 respectively\textup{)}. Using the identity $(t-t^\prime)^{-N}D_\tau^N
 e^{i(t-t^\prime)\tau}=e^{i(t-t^\prime)\tau}$ for every $N\in\N$
 integration by parts gives us
 \begin{multline}\label{1-12}
 g(\zeta)u(t)=(1-\omega_\zeta(t))\int\!\!\int
 e^{i(t-t^\prime)\tau}(t^\prime-t)^{-N}t^N(1-\psi(\frac{t^\prime}{t}))(D^N_{\tilde{\tau}}\tilde{q})
 (t,t^\prime,t\tau,t\zeta)\\(1-\omega^\prime_\zeta(t^\prime))u(t^\prime)dt^\prime\dbar\tau.
 \end{multline} Since $(D_{\tilde{\tau}}^N\tilde{q})(t,t^\prime,\tilde{\tau},\tilde{\zeta})\in
  C^\infty(\overline{\R}_+\times
  \overline{\R}_+,L^{m-N}(B,(\gamma_1,\gamma_1-\mu+N,\Theta);\R^{1+d}_{\tilde{\tau},\tilde{\zeta}}))$
  for every $N$, the integral kernel of \eqref{1-12} belongs to
  $C^\infty(\R_+\times\R_+,L^{-\infty}(B,(\gamma_1,\gamma_1-\mu+N,\Theta)))$
  but the support properties in $t$, $t^\prime$ show that the
  integral kernel belongs in fact to \eqref{1-11}; the smoothness
  in $\zeta$ is evident. Let $\vartheta(\zeta)$ be an excision
  function. Then \eqref{1-12} together with the fact that the
  kernel is compactly supported in $\R_+\times\R_+$ gives us $$(1-\vartheta(\zeta))g(\zeta)\in
   R_G^{-\infty}(\R^d_\zeta)_{\mathcal{O}}.$$
    Thus it remains to verify $\vartheta(\zeta)g(\zeta)\in
   R^0_G(\R^d)_\mathcal{O}$. Let us set for the
   moment
   $\tilde{p}_0(t\tau,t\zeta):=\tilde{q}(0,0,t\tau,t\zeta)$, and
   $$g_0(\zeta)u(t)=\vartheta(\zeta)(1-\omega_\zeta(t))\int\!\!\int e^{i(t-t^\prime)\tau}
   (1-\psi(\frac{t^\prime}{t}))\tilde{p}_0(t\tau,t\zeta)(1-\omega^\prime_\zeta(t^\prime))u(t^\prime)dt^\prime\dbar\tau.$$
   Then, as before, after integration by parts we have \begin{equation}\label{1-21} g_0(\zeta)u(t)=\vartheta(\zeta)
   \int\!\!\int
   e^{i(t-t^\prime)\tau}f_N(t,t^\prime,\zeta)(D^N_{\tilde{\tau}}\tilde{p_0})(t\tau,t\zeta)u(t^\prime)dt^\prime\dbar\tau\end{equation}
   for $f_N(t,t^\prime,\zeta):=(1-\omega_\zeta(t))(t^\prime-t)^{-N}t^N(1-\psi(\frac{t^\prime}{t}))(1-\omega^\prime_\zeta(t^\prime))$
   for every $N\in \N$. Since $N$ is arbitrary, the kernel of
   $g_0(\zeta)$ is a Schwartz function in $(t,t^\prime) \in \R \times \R$
   supported by $\overline{\R}_+\times\overline{\R}_+$. Moreover, by virtue of Theorem
   \ref{1-8} for every $s^\prime$, $s^{\prime\prime}$ and every $M\in
   \N$ there is an $N\in \N$ such that $\parallel\!
   D^N_{\tilde{\tau}}\tilde{p}_0(\tilde{\tau},\tilde{\zeta})\!\parallel_{\mathcal{L}(H^{s^\prime,\gamma_1}(B),
  H^{s^{\prime\prime},\gamma_1-\mu+N}(B))}\leq
   c\langle\tilde{\tau},\tilde{\zeta}\rangle^{-M}$ for all $(\tilde{\tau},\tilde{\zeta})\in
   \R^{1+d}$. 
This gives us $$\parallel\!
   D^N_{\tilde{\tau}}\tilde{p}_0(t{\tau},t{\zeta})\!\parallel_{\mathcal{L}(H^{s^\prime,\gamma_1}(B),
   H^{s^{\prime\prime},\gamma_1-\mu+N}(B))}\leq
   c\langle t{\tau},t{\zeta}\rangle^{-M}\leq c\langle t\rangle^{-M}\langle \tau\rangle^{-M}$$ for $|\zeta|\geq
   \varepsilon>0$, and $t>\varepsilon>0$, for some $c>0$. It follows that the kernel of
   $g_0(\zeta)$ is also a Schwartz function in $(t,t^\prime)\in \R\times \R$ supported by
   $\overline{\R}_+\times \overline{\R}_+$. Moreover, it is $C^\infty$ in $\zeta$; in
   addition we have \begin{equation}\label{1-22}g_0(\lambda \zeta)=\kappa_\lambda g_0(\zeta)\kappa^{-1}_\lambda
   \quad \quad \textup{for}\,\, \lambda\geq 1,\,\, |\zeta|\geq
   C\end{equation} for some $C>0$ which is an evident consequence of \eqref{1-21}.
   To complete the proof we apply a tensor product expansion
   $$\tilde{q}(t,t^\prime,\tilde{\tau},\tilde{\zeta})=\sum^\infty_{j=0}c_j\varphi_j(t)\tilde{p}_j(\tilde{\tau},\tilde{\zeta})\epsilon '(t')$$
   for constants $c_j\in \C$, $\sum\vert c_j\vert<\infty$, $\varphi_j\in
   C_0^\infty([0,R)_0)$ for some $R>0$ (the latter space means the set of all $\varphi\in C_0^\infty(\overline{\R}_+)$ such that $\varphi(t)=0$
   for $t\geq R$), $\tilde{p}_j(\tilde{\tau},\tilde{\zeta})\in
   L^m(B,\boldsymbol{g};\R^{1+d}_{\tilde{\tau},\tilde{\zeta}})$,
   tending to $0$ in the respective spaces as $j\to \infty$.
   Multiplying \eqref{1-12} on both sides by $\vartheta(\zeta)$ it
   follows that $\vartheta(\zeta)g(\zeta)u=\sum_{j=0}^\infty
   c_jg_j(\zeta)u$ for $$g_j(\zeta)u=\vartheta(\zeta)\int\!\!\int e^{i(t-t^\prime)\tau}
   f_N(t,t^\prime,\zeta)\varphi_j(t)\epsilon '(t')D_{\tilde{\tau}}^N\tilde{p}_j(t\tau,t\zeta)u(t^\prime)dt^\prime\dbar\tau,$$
   obtained by integration by parts in every summand. We decompose
   $\varphi_j$ and $\epsilon '$ by applying the Taylor formula,
   namely, $$\varphi_j(t)=\sum_{l=0}^L\frac{1}{l!}t^l(\partial^l_t \varphi_j)(0)+t^{L+1}\varphi_{j,L}(t),\,
   \varphi_{j,L}(t)=\frac{1}{L!}\int_0^1(1-\theta)^L(\partial_t^{L+1}\varphi_j)(\theta
   t)d\theta$$ and $\epsilon '(t')=1+(t')^{L+1}\epsilon '_L(t'),\epsilon '_L(t')=1/L!\int_0^1(1-\theta)^L((\partial^{L+1}_{t'}\epsilon ')(\theta t')d\theta.$
    This gives us
   \begin{multline}\label{1-23}
   g_j(\zeta)u=\vartheta(\zeta)\int\!\!\int e^{i(t-t^\prime)\zeta}
   f_N(t,t^\prime,\zeta)\{\sum_{l=0}^L \alpha_{jl}t^l+\varphi_{j,L}(t)t^{L+1}\} \\\{1+(t')^{L+1}\epsilon '_{L}(t')\}
  (D_{\tilde{\tau}}^N\tilde{p}_j(t\tau,t\zeta))u(t^\prime)dt^\prime\dbar\tau
   \end{multline} for coefficients
   $\alpha_{jl}=\frac{1}{l!}(\partial_t^l\varphi_j)(0),$
   tending to zero as $j\to \infty$. Now the contributions
   $g_{j,l}(\zeta)$ in \eqref{1-23} containing $t^l$ for
   $0\leq l\leq L$ have the property $$g_{j,l}(\lambda\zeta)=\lambda^{-l}\kappa_\lambda g_{j,l}(\zeta)\kappa_\lambda^{-1}$$
   for $\vert\zeta\vert\geq C$, $\lambda\leq 1$, and hence those
   belong to
   $R_G^{-l}(\R^d)_{\mathcal{O}}$.
   They tend to zero for $j\to
   \infty$ in the space of such Green symbols. The remaining contributions to \eqref{1-23} will be
   denoted by $g_{j,l,L}(\zeta)\epsilon '_L$, $\varphi _{j,L}g_{j,L}(\zeta)$ and
   $\varphi _{j,L}g_{j,L,L}\epsilon '_L$ for $0\leq l\leq L$ (with obvious meaning of
   notation). Thus $g_j(\zeta)u=\sum_{l=0}^Lg_{j,l}(\zeta)u+
   \sum_{l=0}^L g_{j,l,L}(\zeta)\epsilon _L u+
   \varphi_{j,L}g_{j,L}(\zeta)u+\varphi_{j,L}g_{j,L,L}(\zeta)\epsilon '_L u.$
   The terms $g_{j,l,L}(\zeta)$, $g_{j,L}(\zeta)$ and
   $g_{j,L,L}(\zeta)$ represent elements of
   $R_G^{-(l+L+1)}(\R^d)_{\mathcal{O}}$
   of corresponding homogeneities for large $\vert\zeta\vert$.
   However, in the expression for $g_j$ they are multiplied by the
   functions $\varphi_{j,L}$, etc. The expressions tend to zero as $j\to
   \infty$. These multiplications do not represent classical
   symbols, but general operator-valued symbols of order $0$.
   Combined with the factors $g_{j,l,L}(\zeta),$ etc., those give
   rise
   to general symbols of order $-(l+L+1)$. Since $L$ is arbitrary
   we obtain altogether an asymptotic expansion of $g(\zeta)$ into
   homogeneous components of the desired quality.
\end{proof}
\section{Growth comparing symbols}
\begin{Thm}\label{1-18}
Let $\omega(t)$ be a cut-off function and
$\omega_\zeta(t):=\omega(t[\zeta])$, and fix $\gamma_1\in \R$.
\item[\textup{(i)}] Given $s \in \R$ for every $L\in \R$
there is an $e=e(L)$ and an $s^\prime\in \R,$ and also for every $e\in \R$
there is an $L=L(e)\in \R$ and an $s^\prime \in \R$, such that
$$(1-\omega_\zeta)t^{-L}\in S^L_{\textup{cl}}(\R^d;\mathcal{H}^{s,(\gamma_1,\gamma_2)}(B^\wedge),\mathcal{K}^{s^\prime,(\gamma_1,\gamma_2^\prime);e}(B^\wedge))$$
uniformly in compact intervals with respect to $\gamma_1, \gamma_2$ and for all $\gamma_2^\prime$.
\item[\textup{(ii)}]Given
$s \in \R$ for every $e\in \R$ there exists an
$L=L(e)$ and an $s^\prime\in \R$ and also for every $L\in \R$ there is an $e=e(L)\in \R$ and
an $s^\prime\in \R$ such that
$$(1-\omega_\zeta)t^{-L}\in S_{\textup{cl}}^L(\R^d;\mathcal{K}^{s,(\gamma_1,\gamma_2);e}(B^\wedge),
\mathcal{H}^{s^\prime,(\gamma_1,\gamma_2^\prime)}(B^\wedge)),$$ uniformly in compact intervals with respect to the
involved weights $\gamma_1$, $\gamma^\prime_2$ and for all $\gamma_2$.
\item[\textup{(iii)}] For every
$s,\gamma_2^\prime,e^\prime, L\in \R$, there is an $s^\prime\in \R$ such that
$\omega_\zeta t^L\in S_{\textup{cl}}^{-L}(\R^d;\mathcal{H}^{s,(\gamma_1,\gamma^\prime_2)}(B^\wedge),\mathcal{K}^{s^\prime,(\gamma_1,\gamma_2^\prime+L);e^\prime}(B^\wedge)).$
 \item[\textup{(iv)}] For every $s,\gamma_2,e ,L\in
\R$ there is an $s^\prime\in \R$ such that $\omega_\zeta t^L\in
S_{\textup{cl}}^{-L}(\R^d; \mathcal{K}^{s,(\gamma_1,\gamma_2);e}(B^\wedge),
\mathcal{H}^{s^\prime,(\gamma_1,\gamma_2)}(B^\wedge)).$
\end{Thm}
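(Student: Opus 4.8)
The statement is the second-order corner counterpart of Theorem \ref{1-66} (itself proved in \cite[Section A3]{Gil2}), and the plan is to run the same scheme, replacing the smooth cross section $X$ by the manifold $B$ with edge $Y$, at the price that this forces a genuine loss of smoothness, i.e. the index $s'$ in the image. I will describe (i); (ii), (iii), (iv) are handled the same way. \emph{Step 1 (reduction to a fixed multiplication operator).} The function $(1-\omega_\zeta)(t)t^{-L}$ scales well: a short computation with the group action $\psi_\lambda$ of \eqref{2-9} gives $\psi_\lambda\big((1-\omega_\zeta)t^{-L}\big)\psi_\lambda^{-1}=\lambda^{-L}(1-\omega_{\lambda\zeta})t^{-L}$ for $\lambda\geq 1$ and $|\zeta|$ large (where $[\lambda\zeta]=\lambda[\zeta]$), that is, the twisted homogeneity $(1-\omega_{\lambda\zeta})t^{-L}=\lambda^{L}\psi_\lambda\big((1-\omega_\zeta)t^{-L}\big)\psi_\lambda^{-1}$. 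Since $\psi$ is (routinely) a group action on both $\mathcal{H}^{s,(\gamma_1,\gamma_2)}(B^\wedge)$ and $\mathcal{K}^{s',(\gamma_1,\gamma_2');e}(B^\wedge)$ (cf. the Remarks around \eqref{2-9} and Definition \ref{1-sp}), Lemma \ref{1-80} reduces everything to: (a) smooth dependence of $(1-\omega_\zeta)t^{-L}$ on $\zeta$ with values in the corresponding $\mathcal{L}(\cdot,\cdot)$, which, since the $\zeta$-derivatives are again multiplications by functions of $t$ supported in $t\gtrsim 1/[\zeta]$, comes down to (b) continuity of the single operator of multiplication by $(1-\omega(t))t^{-L}$ between these spaces, for suitable $s',e,L$; the twisted symbolic estimates then come for free. (For (iii), (iv) the corresponding fixed operator is multiplication by $\omega(t)t^{L}$, of twisted homogeneity $-L$.)

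\emph{Step 2 (the continuity).} Split by the cut-off $\omega(r)$ on $B$. Off $Y$ the cross section is the smooth compact manifold $2\B$ and the claim is verbatim Theorem \ref{1-66} with $X$ replaced by $2\B$ (and $q$ by $d$): no smoothness is lost there, $\gamma_2'$ is free because $(1-\omega(t))t^{-L}u$ vanishes near $t=0$, and the $\langle t\rangle$-weight $e=e(L)$ is produced by the factor $t^{-L}$ at $t\to\infty$. Near $Y$, in the local variables $(t,r,x,y)\in\R_+\times X^\wedge\times\R^q$, Definition \ref{2-8} identifies $\mathcal{H}^{s,(\gamma_1,\gamma_2)}(B^\wedge)$ with $\mathcal{W}^{s,\gamma_2}(\R_+\times\R^q,\mathcal{K}^{s,\gamma_1}(X^\wedge))$, while by \eqref{1-87} and Remark \ref{1-speq} the cone-type space $H^{s',\gamma_1}_{\textup{cone}}(B^\wedge)$ near $Y$ is $\delta_{[t]}\mathcal{W}^{s'}(\R\times\R^q,\mathcal{K}^{s',\gamma_1}(X^\wedge))$, with $(\delta_{[t]}f)(t,r,x,y)=f(t,[t]r,x,[t]y)$. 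For $s,s'\in\N$ I would pass to the differential-operator descriptions of Propositions \ref{2-17} and \ref{2-18}, built on the $\delta_\lambda$-invariant operators $D_j^\alpha$ of Proposition \ref{1-79} (cf. \eqref{1-82}): it then suffices to bound each $D_t^{\beta'}D_y^{\beta''}D_j^\alpha\big(\delta_{[t]}^{-1}(1-\omega(t))t^{-L}u\big)$ with $|\beta|,|\alpha|\leq s'$ in the relevant weighted $L^2$-norm over $(t,r,x,y)$ by finitely many derivatives of $u$ of the type entering the norm \eqref{2-21}. The chain rule through $\delta_{[t]}^{-1}$, i.e. through $(r,y)\mapsto(r/[t],y/[t])$, gives $\partial_t\circ\delta_{[t]}^{-1}=\delta_{[t]}^{-1}\circ\big(\partial_t-\tfrac{[t]'}{[t]}(r\partial_r+\sum_k y_k\partial_{y_k})\big)$, whereas $D_j^\alpha$, being $\delta_\lambda$-invariant and $y$-free, commutes with $\delta_{[t]}^{-1}$. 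Since $\tfrac{[t]'}{[t]}=O(\langle t\rangle^{-1})$ and the relevant $y$-support is $|y|\lesssim[t]$, the term $\tfrac{[t]'}{[t]}\sum_k y_k\partial_{y_k}$ has bounded coefficients there, and $r\partial_r$ is one of the $D_j^\alpha$; tracking powers of $t$ on $\operatorname{supp}(1-\omega)$ absorbs $\gamma_2$ into any prescribed $\gamma_2'$ and the extra power $t^{-L}$ into $e$, which yields the stated dependences. General real $s$ (and the corresponding $s'$) then follows from the integer case by complex interpolation, Theorems \ref{2-39} and \ref{2-34}, together with the duality pairings of Remarks \ref{2-37} and \ref{2-38}, both of which also transport the symbolic estimates; uniformity in compact $\gamma_1,\gamma_2$-intervals is inherited from Theorem \ref{1-66}.

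\emph{The main obstacle.} The place where this is genuinely harder than in Theorem \ref{1-66} is exactly the distortion by $\delta_{[t]}$: on $B^\wedge$ the edge $\R_+\times Y$ is dragged to infinity by the conical scaling, so comparing the Mellin-type and the cone-type corner norms couples the $t$-direction to the edge directions $r,y$. Each $\partial_t$ hitting $\delta_{[t]}^{-1}u$ produces, via the cross term $\tfrac{[t]'}{[t]}r\partial_r$, one extra $X^\wedge$-block differentiation; since the characterizations in Propositions \ref{2-17} and \ref{2-18} already carry up to $s$ derivatives in the $X^\wedge$-block \emph{and} up to $s$ in the $(t,y)$-block, applying $\partial_t$ $s'$ times forces essentially $s\geq 2s'$ — this is the mechanism that makes $s'<s$ unavoidable and the reason the phenomenon first appears at singularity order $2$. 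I expect the bulk of the technical effort to be the careful bookkeeping of these mixed commutators together with the three weight indices $\gamma_1,\gamma_2,e$. Parts (iii) and (iv) should be easier: $\omega_\zeta t^L$ is supported in $t\lesssim 1/[\zeta]$, so only the Mellin regime near $t=0$ enters, there is no $\delta_{[t]}$-distortion to control, and $t^L$ shifts $\gamma_2$ exactly to $\gamma_2+L$ in (iii); for (iv) one additionally uses $L\geq 0$, so that flatness of order $L$ at $t=0$ places the image in the weight-$\gamma_2$ space, the behaviour at $t=\infty$ being irrelevant because $\omega_\zeta$ is supported near $0$.
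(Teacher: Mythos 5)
Your overall architecture coincides with the paper's: reduction via twisted homogeneity and Lemma \ref{1-80} to a fixed multiplication operator, the splitting by the cut-off $\omega(r)$ into a part off the edge handled by Theorem \ref{1-66} and a part near $Y$ handled through the differential-operator characterisations of Propositions \ref{2-17} and \ref{2-18} for $s\in\N$, and duality plus interpolation (Theorems \ref{2-39}, \ref{2-34}) for general real $s$. There is, however, a genuine gap at the central technical step. You propose to verify the near-edge estimates directly in the twisted spaces $\mathcal{W}^{s,\gamma_2}(\R_+\times\R^q,\mathcal{K}^{s,\gamma_1}(X^\wedge))$ and $\mathcal{W}^{s}(\R\times\R^q,\mathcal{K}^{s,\gamma_1}(X^\wedge))$ by bounding the various $D_t^{\beta'}D_y^{\beta''}D_j^\alpha$-derivatives in \emph{weighted $L^2$-norms over $(t,r,x,y)$}. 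But the $\mathcal{W}^0$-norms occurring in Propositions \ref{2-17} and \ref{2-18} are plain weighted $L^2$-norms only when the group action is unitary on the value space, i.e.\ for $\gamma_1=0$; for general $\gamma_1$ the factor $\varkappa^{-1}_{\langle\eta\rangle}$ under the integral obstructs the pointwise substitutions $r\mapsto r[t]^{-1}$, $y\mapsto y[t]^{-1}$ on which your computation rests. The paper therefore first sandwiches the singular parts between the auxiliary spaces $E^s$, $F^{s;e}$ built on the \emph{trivial} group action (the $\mathcal{W}_1$-norms \eqref{2-10}, \eqref{2-11}), using the embeddings provided by Lemmas \ref{1-92} and \ref{2-16}; it carries out all the explicit weighted $L^2$ estimates \eqref{2-19}--\eqref{2-32} in that setting (with no loss: $E^s\to F^{s;e}$ for the same $s$, cf.\ \eqref{2-13}), and only then returns to $E^s_{\textup{sing}}$, $F^{s;e}_{\textup{sing}}$, which costs $2M$ orders of smoothness with $M$ the constant from \eqref{normk}. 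This detour is both what legitimises the $L^2$ computations and what actually produces $s'$; it is absent from your proposal.

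Relatedly, your diagnosis of why $s'<s$ --- that each $\partial_t$ falling on $\delta_{[t]}^{-1}u$ generates an extra $(r,x)$-block derivative and hence forces essentially $s\geq 2s'$ --- is not correct. The chain rule through $(r,y)\mapsto(r[t]^{-1},y[t]^{-1})$ redistributes derivatives between the $(t,y)$- and $(r,x)$-blocks but does not increase their total number: applying $s$ derivatives of type $D_t,D_y$ produces terms with $a$ derivatives in $(t,y)$ and $b$ in $(r,x)$, $a+b\leq s$, with bounded coefficients on the relevant supports, and the iterated norms $N_1$, $N_2$ already carry up to $s$ derivatives in \emph{each} block simultaneously (the mixed fourth sums in Propositions \ref{2-17} and \ref{2-18}). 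This is precisely why the paper obtains the $\mathcal{W}_1$-level mapping property with the same $s$ on both sides. The loss of smoothness is entirely an artefact of comparing twisted and untwisted $\mathcal{W}$-spaces, not of the $\delta_{[t]}$-distortion. Your remarks on (iii) and (iv) --- support near $t=0$, no cone-distortion, the shift $\gamma_2\mapsto\gamma_2+L$ --- agree with the paper's brief treatment of these cases.
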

\begin{proof}
Let us first prepare some observations on the involved spaces $\mathcal{H}^{s,(\gamma_1,\gamma_2)}(B^\wedge)$
and $\mathcal{K}^{s,(\gamma_1,\gamma_2);e}(B^\wedge)$, respectively. The spaces can be written as non-direct sums
\[\mathcal{H}^{s,(\gamma_1,\gamma_2)}(B^\wedge)=E^s_{\textup{reg}}+E^s_{\textup{sing}},\,\,
\mathcal{K}^{s,(\gamma_1,\gamma_2);e}(B^\wedge)=F^{s;e}_{\textup{reg}}+F^{s;e}_{\textup{sing}}\]
where $E^s_{\textup{reg}}:=(1-\omega)\mathcal{H}^{s,\gamma_2}((2\B)^\wedge)$, while $E^s_{\textup{sing}}$ is the completion of $C^\infty_0(\R_+\times(B_\omega\setminus Y))$
where $B_\omega$ is the support of $\omega(r)$ as a function on $B$ (which is equal to $1$ close to $Y$) with respect to the norm
\begin{equation}\label{1-93}
\big\{\sum_{j=1}^N\|\varphi_j \omega u\|^2_{\mathcal{W}^{s,\gamma_2}(\R_+\times \R^q;\mathcal{K}^{s,\gamma_1}(X^\wedge))}\big\}^{\frac{1}{2}},
\end{equation} cf. Definition \ref{2-8}. Moreover, set $F^s_{\textup{reg}}=(1-\omega)H^s_{\textup{cone}}((2\B)^\wedge),$
and define $F^s_{\textup{sing}}$ to be the completion of $C^\infty_0(\R_+\times(B_\omega\setminus Y))$ with respect to the norm
\begin{equation}\label{1-94}
\big\{\sum_{j=1}^N\|\delta_{[t]}^{-1}(\omega\varphi_j u)\|^2_{\mathcal{W}^s(\R\times\R^q,\mathcal{K}^{s,\gamma_1}(X^\wedge))}\big\}^{\frac{1}{2}},
\end{equation} cf. Definition \ref{1-sp} and Remark \ref{1-speq}, and let
$F_{\textup{reg}}^{s;e}:=\langle t\rangle^{-e}F_{\textup{reg}}^s,\, F_{\textup{sing}}^{s;e}:=\langle t\rangle^{-e}F_{\textup{sing}}^s.$ We intend
to reduce the proof of Theorem \ref{1-18} to Theorem \ref{1-66} for the reg-spaces, and to an analogous
result for spaces $E^s$ and $F^{s;e}$, respectively, where $E^s$ means the completion of $C_0^\infty(\R_+\times (B_\omega\setminus Y))$
with respect to the norm
\begin{equation}\label{2-10}
\big\{\sum_{j=1}^N\|\varphi_j \omega u\|^2_{\mathcal{W}_1^{s,\gamma_2}(\R_+\times \R^q;\mathcal{K}^{s,\gamma_1}(X^\wedge))}\big\}^{\frac{1}{2}},
\end{equation}
cf. \eqref{eins}, while $F^{s;e}$ is the completion of $C_0^\infty(\R_+\times (B_\omega\setminus Y))$ with respect to the norm
\begin{equation}\label{2-11}
\big\{\sum_{j=1}^N\|\delta_{[t]}^{-1}\omega\varphi_j u\|^2_{\mathcal{W}_1^{s}(\R\times \R^q;\mathcal{K}^{s,\gamma_1;e}(X^\wedge))}\big\}^{\frac{1}{2}}.
\end{equation}
Observe that the relations
$a_{\textup{reg}}\in S^L(\R^d;E^s_{\textup{reg}},F^{s^\prime;e}_{\textup{reg}}), a_{\textup{sing}}\in S^L(\R^d;E^s_{\textup{sing}},F^{s^\prime;e}_{\textup{sing}})$ and
$a_{\textup{reg}}|_{E^s_{\textup{reg}}\cap E^s_{\textup{sing}}}=a_{\textup{sing}}|_{E^s_{\textup{reg}}\cap E^s_{\textup{sing}}}$
imply
\begin{equation}\label{2-22}a_{\textup{reg}}+a_{\textup{sing}}\in S^L(\R^d;E^s_{\textup{reg}}+E^s_{\textup{sing}},F_{\textup{reg}}^{s^\prime;e}+F_{\textup{sing}}^{s^\prime;e})\end{equation}
with the right weight in $F_{\textup{reg}}^{s^\prime;e}$ and $F_{\textup{sing}}^{s^\prime;e}$, cf. Theorem \ref{1-18} (i),
and similarly for (ii), (iii), (iv). From Theorem \ref{1-66} we have the desired relation for the reg-spaces and $s^\prime=s$.
So we have to concentrate on the sing-spaces. Based on Lemma \ref{1-92} we have the following continuous embeddings
\[E^{s+M}\hookrightarrow E_{\textup{sing}}^{s}\hookrightarrow E^{s-M},\,\,
E_{\textup{sing}}^{s+M}\hookrightarrow E^{s}\hookrightarrow E_{\textup{sing}}^{s-M},\]
\[F^{s+M;e}\hookrightarrow F_{\textup{sing}}^{s}\hookrightarrow F^{s-M;e},\,\,
F_{\textup{sing}}^{s+M;e}\hookrightarrow F^{s;e}\hookrightarrow F_{\textup{sing}}^{s-M;e}\]
which follow by comparing
\eqref{2-10} and \eqref{2-11} with \eqref{1-93} and \eqref{1-94}, respectively.
We shall prove the desired relation of Theorem \ref{1-18} in the following form, say,
first for (i). Given $s\in \R$ for every $L\in \R$ there is an $e=e(L)$
such that
\begin{equation}\label{2-13}
(1-\omega_\zeta)t^{-L}\in S_{\textup{cl}}^L(\R^d;E^s,F^{s;e})
\end{equation}(remember that the weights as in (i) are hidden in the meaning of the spaces $E^s$, $F^{s;e}$).
The group actions in the spaces $E^s$, $F^{s;e}$ with respect to the $t$-direction are given by the same
expression as for $E^s_{\textup{sing}}$, $F^{s;e}_{\textup{sing}}$, namely,
$(\chi_\lambda u)(t,\cdot)=\lambda^{\frac{b+1}{2}} u(\lambda t,\cdot), \lambda\in \R_+.$
We will show that the operator $(1-\omega_\zeta)t^{-L}$ defines a continuous map
$$(1-\omega_\zeta)t^{-L}:E^s\to F^{s;e},$$
for any fixed $\zeta\in \R^d.$ It is also smooth in $\zeta$. Then because of
\[\|(1-\omega_\zeta)t^{-L}\|_{\mathcal{L}(E^{s+M}_{\textup{sing}},F^{s-M;e}_{\textup{sing}})}\leq c\|(1-\omega_\zeta)t^{-L}\|_{\mathcal{L}(E^{s},F^{s;e})}\]
for a constant $c>0$ which is a consequence of the above-mentioned continuous embeddings
$E_{\textup{sing}}^{s+M}\hookrightarrow E^{s}, F^{s;e}\hookrightarrow F_{\textup{sing}}^{s-M;e},$
we obtain the same of the family of maps
\[(1-\omega_\zeta)t^{-L}:E_{\textup{sing}}^{s+M}\to F_{\textup{sing}}^{s-M;e}.\]
By virtue of
$(1-\omega_{\lambda\zeta})t^{-L}=\chi_\lambda(1-\omega_\zeta)t^{-L}\chi_\lambda^{-1}$
for all $\lambda\geq 1$, $|\zeta|\geq \textup{const}$, it follows that
$(1-\omega_\zeta)t^{-L}\in S_{\textup{cl}}^L(\R^d;E_{\textup{sing}}^{s+M},F_{\textup{sing}}^{s-M;e}),$
or, alternatively, since this is true of all $s$,
\begin{equation}\label{2-14}
(1-\omega_\zeta)t^{-L}\in S^L(\R^d;E_{\textup{sing}}^{s},F_{\textup{sing}}^{s-2M;e})).
\end{equation}
Moreover, by Theorem \ref{2-34} we already have
$(1-\omega_\zeta)t^{-L}\in S^L(\R^d;E^s_{\textup{reg}},F^{s;e}_{\textup{reg}})$
which entails
\begin{equation}\label{2-15}(1-\omega_\zeta)t^{-L}\in S^L(\R^d;E^s_{\textup{reg}},F^{s-2M;e}_{\textup{reg}}).\end{equation}
Then \eqref{2-14} and \eqref{2-15} yield
\begin{equation}\label{2-35}(1-\omega_\zeta)t^{-L}\in S^L(\R^d;E^s_{\textup{reg}}+E^s_{\textup{sing}},F^{s-2M;e}_{\textup{reg}}+F^{s-2M;e}_{\textup{sing}}),\end{equation}
cf. \eqref{2-22}, i.e. the desired relations as in (i). To complete the proof of (i)
we first verify the desired property for $s\in \N$. Here we employ the following analogues of the
characterisation of the spaces involved in $E_{\textup{sing}}^{s}$, $F_{\textup{sing}}^{s;e}$. The space $E_{\textup{sing}}^{s}$,
$s\in \N$, relies on \eqref{2-10}, the analogue of the norm \eqref{1-93} where
$\mathcal{W}^{s,\gamma_2}(\R_+\times \R^q;\mathcal{K}^{s,\gamma_1}(X^\wedge))$ is replaced by $\mathcal{W}_1^{s,\gamma_2}(\R_+\times \R^q;\mathcal{K}^{s,\gamma_1}(X^\wedge))$,
cf. Lemma \ref{2-16}. Now, instead of Proposition \ref{2-17} we have for every $s\in \N$ the equivalence of norms
\begin{align*}
&\| u\|_{\mathcal{W}_1^{s,\gamma_2}(\R_+\times \R^q,\mathcal{K}^{s,\gamma_1}(X^\wedge))} \sim\big\{
 \| u\|^2_{\mathcal{W}_1^{0,\gamma_2}(\R_+\times \R^q,\mathcal{K}^{0,\gamma_1}(X^\wedge))}\notag\\
&+\sum_{|\beta|=s}\|(t\partial_t)^{\beta^\prime} D_y^{\beta^{\prime\prime}}u\|^2_{\mathcal{W}_1^{0,\gamma_2}(\R_+\times \R^q,
\mathcal{K}^{0,\gamma_1}(X^\wedge))}\notag\\&+\sum_{|\alpha|=s, 1\leq l\leq N}\| D_l^\alpha u\|^2_{\mathcal{W}_1^{0,\gamma_2}(\R_+\times \R^q,\mathcal{K}^{0,\gamma_1;-s}(X^\wedge))}\notag\\&+\sum_{|\alpha|=s, 1\leq l\leq N,|\beta|=s}
\|(t\partial_t)^{\beta^\prime} D_y^{\beta^{\prime\prime}}D_l^\alpha u\|^2_{\mathcal{W}_1^{0,\gamma_2}(\R_+\times \R^q,
\mathcal{K}^{0,\gamma_1;-s}(X^\wedge))}\big\}^{1/2}
\end{align*}for the operators $D_l^{\alpha}$ from Proposition \textup{\ref{1-79}} or, if desired, a corresponding sum over $0<|\alpha|\leq s$,
for $\mathcal{K}^{0,\gamma_1;-|\alpha|}(X^\wedge)$, and $0<|\beta|\leq s$. The space $F_{\textup{sing}}^{s}$, $s\in \N$, relies
on \eqref{2-11}, the analogue of the norm \eqref{1-94} where
$\mathcal{W}^s(\R\times \R^q,\mathcal{K}^{s,\gamma_1}(X^\wedge))$ is replaced by $\mathcal{W}_1^s(\R\times \R^q,\mathcal{K}^{s,\gamma_1}(X^\wedge))$.
Then, by virtue of the corresponding analogue of Proposition \ref{2-18} for $s\in \N$ we have the equivalence of norms
\begin{align*}
&\|u\|_{\mathcal{W}_1^{s,\gamma_1}(\R\times\R^q\times X^\wedge)}
\sim\{\big\|u(t,r[t]^{-1},x,y[t]^{-1})\|^2_{\mathcal{W}_1^0(\R\times \R^q,\mathcal{K}^{0,\gamma_1}(X^\wedge))}\notag
\\ &+\sum_{|\beta|=s}\|D_t^{\beta^\prime}D_y^{\beta^{\prime\prime}}
u(t,r[t]^{-1},x,y[t]^{-1})\|^2_{\mathcal{W}_1^0(\R\times \R^q,\mathcal{K}^{0,\gamma_1}(X^\wedge))}\notag
\\ &+\sum_{|\alpha|=s,1\leq j\leq N}\|D_j^\alpha
 u(t,r[t]^{-1},x,y[t]^{-1})\|^2_{\mathcal{W}_1^0(\R\times \R^q,\mathcal{K}^{0,\gamma_1;-s}(X^\wedge))}\notag
 \\&+\sum_{|\alpha|=s,1\leq j\leq N,|\beta|=s}\|D_t^{\beta^\prime}D_y^{\beta^{\prime\prime}}D_j^\alpha
 u(t,r[t]^{-1},x,y[t]^{-1})\|^2_{\mathcal{W}_1^0(\R\times \R^q,\mathcal{K}^{0,\gamma_1;-s}(X^\wedge))}\big\}^{\frac{1}{2}}
\end{align*} again with the operators $D_l^\alpha$ in Proposition \ref{1-79}, or, if desired, a corresponding sum over $0<|\alpha|\leq s$.
Now, according to the requirement of (i) we have to show the following. Given $s\in \N$
for every $L$ there is an $e=e(L)$ such that for arbitrary
$u\in C_0^\infty(\R_+\times \R^q,\mathcal{K}^{0,\gamma_1}(X^\wedge))$ vanishing in a neighbourhood of $t=0$
and which are localised in the expression \eqref{1-speqe}
and \eqref{2-21}, i.e. with included factors $\omega\varphi_j$, we have
 \begin{equation}\label{2-19}
 \begin{split}
 \|(1-\omega_\zeta)t^{-L}u(t,r[t]^{-1},& \,x, y[t]^{-1})\|_{{\langle t\rangle^{-e}}\mathcal{W}_1^0(\R\times \R^q,\mathcal{K}^{0,\gamma_1}(X^\wedge))}\\
 &\leq c\|u(t,r,x,y)\|_{\mathcal{W}_1^{0,\gamma_2}(\R_+\times \R^q,\mathcal{K}^{0,\gamma_1}(X^\wedge))}
 \end{split}\end{equation} ($t^{-\gamma_2^\prime}\mathcal{W}_1^0$ makes sense because $u$ is supported in $\R_{+,t}$). Moreover,
\begin{equation}\label{2-23}
\|D_t^{\tilde{\beta}^\prime}D_y^{\tilde{\beta}^{\prime\prime}}
(1-\omega_\zeta)t^{-L}u(t,r[t]^{-1},x,y[t]^{-1})\|_{\langle t\rangle^{-e}\mathcal{W}_1^0(\R\times \R^q,\mathcal{K}^{0,\gamma_1}(X^\wedge))}
\leq c N_1(u)\end{equation}
for 
\begin{align*}
&N_1(u):=\big\{\sum_{|\beta|=s}\|(t\partial_t)^{\beta^\prime} D_y^{\beta^{\prime\prime}}u(t,r,x,y)\|^2_{\mathcal{W}_1^{0,\gamma_2}(\R_+\times \R^q,
\mathcal{K}^{0,\gamma_1}(X^\wedge))}\notag \\&+\sum_{|\alpha|=s, 1\leq l\leq N}\| D_l^\alpha u(t,r,x,y)\|^2_{\mathcal{W}_1^{0,\gamma_2}(\R_+\times \R^q,
\mathcal{K}^{0,\gamma_1;-s}(X^\wedge))}\notag\\&+\sum_{|\alpha|=s, 1\leq l\leq N,|\beta|=s}
\|(t\partial_t)^{\beta^\prime} D_y^{\beta^{\prime\prime}}D_l^\alpha u(t,r,x,y)\|^2_{\mathcal{W}_1^{0,\gamma_2}(\R_+\times \R^q,
\mathcal{K}^{0,\gamma_1;-s}(X^\wedge))}\big\}^{\frac{1}{2}}\end{align*} for $(\tilde{\beta}^\prime,\tilde{\beta}^{\prime\prime})$
such that $|\tilde{\beta}^\prime|+|\tilde{\beta}^{\prime\prime}|=s$ and
\begin{equation}\label{2-24}
\|D_j^{\tilde{\alpha}}(1-\omega_\zeta)t^{-L}u(t,r[t]^{-1},x,y[t]^{-1})\|_{\langle t\rangle^{-e}\mathcal{W}_1^0(\R\times \R^q,\mathcal{K}^{0,\gamma_1;-s}(X^\wedge))}\leq cN_1(u)\end{equation}
for $\tilde{\alpha}$ such that $|\tilde{\alpha}|=s$, $0\leq j\leq N$,
for all $\zeta$ in any bounded set in $\R^q$, for a constant $c>0$ independent on $u$,
and finally,
\begin{equation}\label{2-27}
\|D_t^{\tilde{\beta}^\prime}D_y^{\tilde{\beta}^{\prime\prime}}D_j^\alpha(1-\omega_\zeta)t^{-L}
 u(t,r[t]^{-1},x,y[t]^{-1})\|_{\mathcal{W}_1^0(\R\times \R^q,\mathcal{K}^{0,\gamma_1;-s}(X^\wedge))}\leq cN_1(u).\end{equation}
Let us start with the proof of estimate \eqref{2-19}. For abbreviation we write $\omega$ rather than $\omega_\zeta$;
the parameter $\zeta$ is fixed in the computation. For convenience the computation will be carried out for $q=1$;
however, we keep writing $\R^q$ in the corresponding norm expression. In addition for simplicity from now on we consider the case $\textup{dim}\,X=0$;
the consideration in general is completely analogous. The norm of a function $v(t,r,y)$ in the space $\mathcal{W}_1^{0,\gamma_2}(\R_+\times \R^q,
\mathcal{K}^{0,\gamma_1-s})$ for $\mathcal{K}^{0,\gamma_1}:=\mathcal{K}^{0,\gamma_1}(\R_+)$ is equivalent to the one of $t^{-\gamma_2}L^2(\R_+\times \R^q,
\mathcal{K}^{s,\gamma_1})$, i.e. writing equality when we mean equivalence, we have
\begin{multline}\label{2-29}\|v(t,r,y)\|^2_{\mathcal{W}^{0,\gamma_2}_1(\R_+\times \R^q,\mathcal{K}^{0,\gamma_1})}=\|t^{-\gamma_2}t^{(1+q)/2}v(t,r,y)\|^2_{L^2(\R_+\times \R^q,\mathcal{K}^{0,\gamma_1})}\\=\int_{\R^q}\int_0^\infty\int_0^\infty|r^{-\gamma_1}\langle r\rangle^{\gamma_1}t^{-\gamma_2}v(t,r,y)|^2t^{1+q}drdtdy,
\end{multline}
cf. the second term on the right of \eqref{2-20} for $u=0$. Here we used the identification
$\mathcal{K}^{0,\gamma_1}(\R_+)=r^{\gamma_1}\langle r\rangle^{-\gamma_1}L^2(\R_+).$ Moreover,
for a function $v(t,r,y)$ in the space $\langle t\rangle^{-e}\mathcal{W}_1^0(\R\times \R^q,\mathcal{K}^{0,\gamma_1})$ we have
\begin{multline}\label{2-25}\|v(t,r,y)\|^2_{\langle t\rangle^{-e}\mathcal{W}^{0}_1(\R\times \R^q,\mathcal{K}^{0,\gamma_1})}=\|\langle t\rangle^{e}v(t,r,y)\|^2_{L^2(\R\times \R^q,\mathcal{K}^{0,\gamma_1})} \\=\int\!\!\!\int\!\!\!\int|\langle t\rangle^{e}r^{-\gamma_1}\langle r\rangle^{\gamma_1}v(t,r,y)|^2t^{1+q}drdtdy.
\end{multline}
As noted before we assume the functions to be supported in $t$ on a half-axis $[\varepsilon,\infty)$ for some $\varepsilon>0$.
Thus for the proof of \eqref{2-19} we observe that
\begin{align}\label{2-26}
&\int\!\!\!\int\!\!\!\int|\langle t\rangle^{e}r^{-\gamma_1}\langle r\rangle^{\gamma_1}(1-\omega_\zeta)t^{-L}u(t,r[t]^{-1},y[t]^{-1})|^2t^{1+q}drdtdy\notag\\
&\leq c\int\!\!\!\int\!\!\!\int|\langle t\rangle^{e}(r[t])^{-\gamma_1}\langle r[t]\rangle^{\gamma_1}(1-\omega_\zeta)t^{-L}u(t,r,y)|^2t^{2(1+q)}drdtdy\notag\\
&\leq c\int\!\!\!\int\!\!\!\int|\langle t\rangle^e[t]^{-\gamma_1}r^{-\gamma_1}\langle r\rangle^{\gamma_1}\langle[t]\rangle^{\gamma_1}
(1-\omega_\zeta)t^{-L}u(t,r[t]^{-1},y[t]^{-1})|^2t^{2(1+q)}drdtdy\notag\\
&\leq c\int\!\!\!\int\!\!\!\int|\varphi(t)t^{-\gamma_2}r^{-\gamma_1}\langle r\rangle^{\gamma_1}u(t,r,y)|^2t^{(1+q)}drdtdy
\end{align} for $\varphi(t)=t^{\gamma_2}\langle t\rangle^e\langle t\rangle^{-L}[t]^{-\gamma_1}\langle[t]\rangle^{\gamma_1}t^{1+q}(1-\omega_\zeta)\leq \textup{const.}$,
as soon as $e$ is chosen in a suitable way.
For the proof of \eqref{2-23} we apply the norm expression \eqref{2-25} to $f=D_t^{\beta^\prime}D_y^{\beta^{\prime\prime}}(1-\omega_\zeta)t^{-L}
u(t,r[t]^{-1},y[t]^{-1})$, let $v:=(1-\omega_\zeta)t^{-L}
u(t,r[t]^{-1},y[t]^{-1})$ and check the case $\tilde{\beta}^\prime=1$ and $\tilde{\beta}^{\prime\prime}=0$. In other words we take the functions
\begin{multline}\label{2-28}
\partial_t v(t,r[t]^{-1},y[t]^{-1})=v_t(t,r[t]^{-1},y[t]^{-1})\\+r\partial_t[t]^{-1}(\partial_r v)(t,r[t]^{-1},y[t]^{-1})+y\partial_t[t]^{-1}(\partial_y v)
(t,r[t]^{-1},y[t]^{-1})
\end{multline} where $v_t$ means the derivative of $v$ in the first $t$-variable. In the estimate it suffices to consider the summands separately.
The consideration for the first summand on the right of \eqref{2-28} is nearly the same as for \eqref{2-26}. In fact, the contribution where only $(1-\omega_\zeta)t^{-L}$
is differentiated in $t$ is exactly as what we did in \eqref{2-26}, while for the term containing $u_t$ we take first norm expression
on the right of \eqref{2-23} for $\beta^\prime=1$, $\beta^{\prime\prime}=0$. For the second summand on the right of \eqref{2-28}
the only novelty is the extra factor $r\partial_t[t]^{-1}$. Besides a term with the $t$-derivative of $(1-\omega_\zeta)t^{-L}$
which causes no extra problem, on the left of \eqref{2-23} we have
\begin{align*}
&\int\!\!\!\int\!\!\!\int|\langle t\rangle^{e}r^{-\gamma_1}\langle r\rangle^{\gamma_1}(1-\omega_\zeta)t^{-L}\partial_tu(t,r[t]^{-1},y[t]^{-1})|^2t^{1+q}drdtdy\\
&=\int\!\!\!\int\!\!\!\int|\langle t\rangle^{e}r^{-\gamma_1}\langle r\rangle^{\gamma_1}(1-\omega_\zeta)t^{-L}r\partial_t[t]^{-1}(\partial_ru)(t,r[t]^{-1},y[t]^{-1})|^2t^{1+q}drdtdy\\
&\leq c\int\!\!\!\int\!\!\!\int|\varphi_1(t)t^{-\gamma_2}\langle r\rangle^{-1} r^{-\gamma_1}\langle r\rangle^{\gamma_1}u(t,r,y)|^2t^{(1+q)}drdtdy\\
&\leq c\|D^1 u\|^2_{\mathcal{W}_1^{0,\gamma_2}(\R_+\times \R^q;\mathcal{K}^{0,\gamma_1;-1})},
\end{align*}
here $D^1=r\partial_r$, because we took the $\R_+$-case, $\varphi_1(t)=t^{\gamma_2}\langle t\rangle^e\langle t\rangle^{-L}|\partial_t[t]^{-1}|[t]^{-\gamma_1}\langle[t]\rangle^{\gamma_1}t^{1+q}(1\newline -\omega_\zeta)\leq const$ for a suitable choice of $e$.
On the respective right hand side we took a norm expression
from the third sum for $s=1$, here for $|\alpha|=1$. The $\langle r\rangle^{-1}$-power
belonging to the $\mathcal{K}^{0,\gamma_1;-1}(\R_+)$-norm does not
cause any problem since the function is localised near $r=0$ by the involved cut-off factor $\omega(r)$.
The third term on the right of \eqref{2-28} is easy; here it suffices to refer to the first norm expression on
the right of \eqref{2-23} for $\beta^\prime=0$, $\beta^{\prime\prime}=1$, and we take into account the fact that the
function contains a factor in $y$ of compact support. The higher derivatives can be treated in an analogous manner.\\
\\
The estimate \eqref{2-24} may be checked first for $|\tilde{\alpha}|=1$; in this case we take the second term on the
right of \eqref{2-24} and proceed as before, then analogous estimates hold for arbitrary $\tilde{\alpha}$.
For \eqref{2-27} we can take again first order derivative on the left and in the estimate we refer to the third
term on the right of \eqref{2-27}. Also here we drop the elementary consideration; the higher derivatives can be treated
in an analogous manner. Summing up we proved the first direction of Theorem \ref{1-18} (i) for $s\in \N$. The converse direction, i.e.
to find $L=L(e)$ for given $e$ is clear from the structure of the functions $\varphi(t)$, $\varphi_1(t)$, etc.\\
\\
Let us now pass to the proof of Theorem \ref{1-18} (ii) for $s\in \N$. In the case $s=0$ similarly as \eqref{2-19}
we have to show
\[\|(1-\omega_\zeta)t^{-L}u(t,r,y)\|_{\mathcal{W}_1^{0,\gamma^\prime_2}(\R_+\times \R^q,\mathcal{K}^{0,\gamma_1}(X^\wedge))}
\leq c \|u(t,r[t]^{-1},y[t]^{-1})\|_{{\langle t\rangle^{-e}}\mathcal{W}_1^0(\R\times \R^q,\mathcal{K}^{0,\gamma_1}(X^\wedge))}.\]
Using \eqref{2-29} and \eqref{2-25} (again in the $\R_+$-case and computations for $q=1$) and the estimate
\[\langle r[t]^{-1}\rangle^{\gamma_1}\leq c\langle r\rangle^{\gamma_1}\pi(t,\gamma_1), \qquad \pi(t;\gamma_1):=\bigg\{\begin{array}{ll}
 [t]^{-\gamma_1}&\textup{for}\quad \gamma_1\leq 0,\\ 1&\textup{for}\quad \gamma_1\geq 0\end{array}\] we obtain
 \begin{align*}
 &\int\!\!\!\int\!\!\!\int|(1-\omega_\zeta)t^{-L}r^{-\gamma_1}\langle r\rangle^{\gamma_1}t^{-\gamma^\prime_2}u(t,r,y)|^2t^{1+q}drdtdy\\
 &=\int\!\!\!\int\!\!\!\int|(1-\omega_\zeta)t^{-L}(\tilde{r}[t]^{-1})^{-\gamma_1}
 \langle\tilde{r}[t]^{-1}\rangle^{\gamma_1}t^{-\gamma^\prime_2}u(t,\tilde{r}[t]^{-1},\tilde{y}[t]^{-1})|^2[t]^{-(1+q)}t^{1+q}d\tilde{r}dtd\tilde{y}\\
 &\leq c\int\!\!\!\int\!\!\!\int|\varphi_2(t)\langle t\rangle^{e}r^{-\gamma_1}\langle r\rangle^{\gamma_1}u(t,r[t]^{-1},x,y[t]^{-1})|^2t^{1+q}drdtdy
 \end{align*}
for $\varphi_2=(1-\omega_\zeta)t^{-L}t^{-\gamma_2^\prime}[t]^{-(1+q)}\langle t\rangle^{-e}[t]^{\gamma_1}\pi(t;\gamma_1)$. It is now clear that
for every $e\in \R$ there is an $L\in \R$ such that $|\varphi_2(t)|\leq const$ and also for every $L\in \R$ there is an $e\in \R$ such
that $|\varphi_2(t)|\leq const.$ This is uniform in compact $\gamma_1$- and $\gamma_2^\prime$-intervals. The next point in proving Theorem \ref{1-18} (ii) is
to show the estimates
\begin{equation}\label{2-30}
\|(t\partial_t)^{\tilde{\beta}^\prime} D_y^{\tilde{\beta}^{\prime\prime}}(1-\omega_\zeta)t^{-L}u(t,r,y)\|_{\mathcal{W}_1^{0,\gamma_2^\prime}(\R_+\times \R^q,
\mathcal{K}^{0,\gamma_1}(X^\wedge))}\leq c N_2(u)\end{equation}
for
\begin{align*}
&N_2(u):= \big\{\sum_{|\beta|=s}\|D_t^{\beta^\prime}D_y^{\beta^{\prime\prime}}
u(t,r[t]^{-1},x,y[t]^{-1})\|^2_{\langle t\rangle^{-e}\mathcal{W}_1^0(\R\times \R^q,\mathcal{K}^{0,\gamma_1}(X^\wedge))}
\notag\\ &+\sum_{|\alpha|=s,1\leq j\leq N}\|D_j^\alpha
 u(t,r[t]^{-1},x,y[t]^{-1})\|^2_{\langle t\rangle^{-e}\mathcal{W}_1^0(\R\times \R^q,\mathcal{K}^{0,\gamma_1;-s}(X^\wedge))}
\notag \\&+\sum_{|\alpha|=s,1\leq j\leq N,|\beta|=s}\|D_t^{\beta^\prime}D_y^{\beta^{\prime\prime}}D_j^\alpha
 u(t,r[t]^{-1},x,y[t]^{-1})\|^2_{\langle t\rangle^{-e}\mathcal{W}_1^0(\R\times \R^q,\mathcal{K}^{0,\gamma_1;-s}(X^\wedge))}\big\}^{\frac{1}{2}},
\end{align*} as well as,
\begin{equation}\label{2-31}
\|D_l^{\tilde{\alpha}}(1-\omega_\zeta)t^{-L}u(t,r,y)\|_{\mathcal{W}_1^{0,\gamma_2^\prime}(\R_+\times \R^q,
\mathcal{K}^{0,\gamma_1;-s}(X^\wedge))}\leq c N_2(u)\end{equation}
and
\begin{equation}\label{2-32}
\|(t\partial_t)^{\tilde{\beta}^\prime} D_y^{\tilde{\beta}^{\prime\prime}}D_l^{\tilde{\alpha}} (1-\omega_\zeta)t^{-L}u(t,r,y)\|_{\mathcal{W}_1^{0,\gamma_2^\prime}(\R_+\times \R^q,
\mathcal{K}^{0,\gamma_1;-s}(X^\wedge))}\leq c N_2(u).\end{equation}
Let us now verify \eqref{2-30}. Again we look at a typical case, say, $\tilde{\beta}^\prime=1$, $\tilde{\beta}^{\prime\prime}=0$.
Then for the estimate we take the first expression on the right of \eqref{2-30} for ${\beta}^\prime=1$. Thus, since there is
only the derivative of $u$ in the first $t$-variable we have
\begin{align*}
&\|(1-\omega_\zeta)t^{-L}(t\partial_t)u(t,r,y)\|^2_{\mathcal{W}_1^{0,\gamma_2^\prime}(\R_+\times \R^q,
\mathcal{K}^{0,\gamma_1}(X^\wedge))}\\
&=\int\!\!\!\int\!\!\!\int|(1-\omega_\zeta)t^{-L}r^{-\gamma_1}\langle r\rangle^{\gamma_1}t^{-\gamma^\prime_2}u_t(t,r,y)|^2t^{1+q}drdtdy\\
&=\int\!\!\!\int\!\!\!\int|(1-\omega_\zeta)t^{-L}(\tilde{r}[t]^{-1})^{-\gamma_1}\langle \tilde{r}[t]^{-1}\rangle^{\gamma_1}t^{-\gamma^\prime_2+1}u_t(t,\tilde{r}[t]^{-1},\tilde{y}[t]^{-1})|^2[t]^{-(1+q)}t^{1+q}d\tilde{r}dtd\tilde{y}.
\end{align*} Then the rest of the estimate is as before; the only difference is the extra
$t$-factor. Again we may interchange the role of $e$ and $L$. In the case \eqref{2-31} we check as a typical
case $\tilde{\alpha}=1$ where $D^{\tilde{\alpha}}_l=r\partial_r$ and it suffices to take into account the second term on
the right of \eqref{2-31} for $\alpha=1$. Thus
\begin{align}\label{2-36}
&\|(1-\omega_\zeta)t^{-L}(r\partial_r)u(t,r,y)\|^2_{\mathcal{W}_1^{0,\gamma_2^\prime}(\R_+\times \R^q,
\mathcal{K}^{0,\gamma_1;-1}(X^\wedge))}\notag\\
&=\int\!\!\!\int\!\!\!\int|(1-\omega_\zeta)t^{-L}r^{-\gamma_1}\langle r\rangle^{\gamma_1}t^{-\gamma^\prime_2}\langle r\rangle^{-1}(r\partial_r u)(t,r,y)|^2t^{1+q}drdtdy\notag\\
&=\int\!\!\!\int\!\!\!\int|(1-\omega_\zeta)t^{-L}(\tilde{r}[t]^{-1})^{-\gamma_1}\langle \tilde{r}[t]^{-1}\rangle^{\gamma_1}t^{-\gamma^\prime_2}\langle \tilde{r}[t]^{-1}\rangle^{-1}(\tilde{r}[t]^{-1})\notag\\&(\partial_r u)(t,\tilde{r}[t]^{-1},\tilde{y}[t]^{-1})|^2[t]^{-(1+q)}t^{1+q}d\tilde{r}dtd\tilde{y}\notag\\
&\leq \int|\tilde{\varphi_3}(t,r)\langle t\rangle^e r^{-\gamma_1}\langle r\rangle^{\gamma_1} (r\partial_r)u(t,r[t]^{-1},y[t]^{-1})\langle r \rangle^{-1}|^2t^{1+q}drdtdy
\end{align} for $\tilde{\varphi}_3(t,r)=(1-\omega_\zeta)t^{-L}(r[t]^{-1})^{-\gamma_1}\langle r[t]^{-1}\rangle^{\gamma_1}t^{-\gamma_2^\prime}
\langle r[t]^{-1}\rangle^{-1}(r[t]^{-1})[t]^{-(1+q)}\langle t\rangle^{-e} r^{\gamma_1}\langle r\rangle^{-\gamma_1}\newline\langle r\rangle r^{-1}.$
The right hand side of \eqref{2-36} may be estimated by an integral with
\[\varphi_3(t):=(1-\omega_\zeta)t^{-L}[t]^{\gamma_1}\pi(t,\gamma_1)t^{-\gamma_2}\pi(t,-1)[t]^{1+q}\langle t\rangle^{-e}\]
instead of $\tilde{\varphi}_3$. For \eqref{2-32} the typical case is $\tilde{\beta}^\prime=1$, $\tilde{\beta}^{\prime\prime}=0$,
$\tilde{\alpha}=1$; the other derivatives can be treated in a similar way. Here we take into account the third term on the right
of \eqref{2-32} for $\tilde{\beta}^\prime=1$, $\tilde{\beta}^{\prime\prime}=0$, $\tilde{\alpha}=1$. The consideration is a combination
of \eqref{2-36} with what we did for \eqref{2-32}. In fact, we apply \eqref{2-36} for $(t\partial_t)(1-\omega_\zeta)t^{-L}u(t,r,y)$
rather than $(1-\omega_\zeta)t^{-L}u(t,r,y)$. The $t$-derivative of $(1-\omega_\zeta)t^{-L}$ is harmless so we ignore this, and then we obtain an analogue of
the right hand side of \eqref{2-36} for $((t\partial_t)u)(t,r[t]^{-1},y[t]^{-1})$ (the $t$-derivative only acts on the first $t$-variable).
The third term on the right of \eqref{2-32} contains the other $t$-derivatives by the chain rule, but those make the
right hand side only larger but do not affect the result. So far we possess \eqref{2-13} and, analogously,
\[(1-\omega_\zeta)t^{-L}\in S_{\textup{cl}}^L(\R^d;F^{s;e},E^s)\] for $s\in \N$, for every $e$ with a resulting $L$ and vice versa,
uniformly in compact weight intervals for all the involved weights. Using the dual spaces referring to the scalar products
\eqref{1-89} and \eqref{2-33}, respectively, we have analogues of Remarks \ref{2-37} and \ref{2-38} for the spaces
with trivial group actions (indicated by subscript $1$). This yields resulting spaces with opposite smoothness, $(t,r)$-weights
at $0$ and weights for $t\to \infty$. Thus the proof of (ii) which is done for $s\in \N$ gives us by duality (i) for $-s\in \N$,
while (i) already proved for $s\in \N$ yields (ii) for $-s\in \N$. The interpolation properties of Theorem \ref{2-39} and
\ref{2-34} that are true in analogous form for the spaces with subscript $1$ yield (i) and (ii) for all $s\in \R$ for the
respective spaces. Now \eqref{2-35} for (i) and the analogue of such a relation for (ii) finally completes the proof of
Theorem \ref{1-18} (i), (ii). The properties (iii), (iv) follow again from the twisted homogeneity with respect to $\zeta$ and from the fact that the spaces in question coincide
for small $t.$ 
\end{proof}

\addcontentsline{toc}{section}{References}


\end{document}